\newcommand{\mysection}[1]{\section{#1}
\setcounter{equation}{0}}
\newtheorem{theorem}{Theorem}[section]
\newtheorem{corollary}[theorem]{Corollary}
\newtheorem{lemma}[theorem]{Lemma}
\newtheorem{proposition}[theorem]{Proposition}
\theoremstyle{definition}
\newtheorem{remark}[theorem]{Remark}
\theoremstyle{definition}
\theoremstyle{definition}
\newtheorem{assumption}[theorem]{Assumption}
\def\dashint{\operatorname%
{\,\,\text{\bf--}\kern-.98em\DOTSI\intop\ilimits@\!\!}}
\def\vu{\textit{\textbf{u}}}
\def\vv{\textit{\textbf{v}}}
\def\vw{\textit{\textbf{w}}}
\def\vf{\textit{\textbf{f}}}
\def\vg{\textit{\textbf{g}}}
\def\bR{\mathbb{R}}
\def\bZ{\mathbb{Z}}
\def\bH{\mathbb{H}}
\def\bM{\mathbb{M}}
\def\bC{\mathbb{C}}
\def\rA{{\sf A}}
\def\rB{{\sf B}}
\def\cD{\mathcal{D}}
\def\cH{\mathcal{H}}
\def\cP{\mathcal{P}}
\def\cQ{\mathcal{Q}}
\def\cL{\mathcal{L}}
\newcommand{\set}[1]{\left\{#1\right\}}
\newcommand{\Div}{\operatorname{div}}
\begin{document}
\title[Parabolic and elliptic systems]{Parabolic and elliptic systems in divergence form with variably partially BMO coefficients}

\author[H. Dong]{Hongjie Dong}
\address[H. Dong]{Division of Applied Mathematics, Brown University,
182 George Street, Providence, RI 02912, USA}
\email{Hongjie\_Dong@brown.edu}
\thanks{H. Dong was partially supported by a start-up funding from the Division of Applied Mathematics of Brown University, NSF grant number DMS-0635607 from IAS, and NSF grant number DMS-0800129.}

\author[D. Kim]{Doyoon Kim}
\address[D. Kim]{Department of Applied Mathematics, Kyung Hee University, 1, Seochun-dong, Gihung-gu, Yongin-si, Gyeonggi-do 446-701 Korea}
\email{doyoonkim@khu.ac.kr}

\subjclass[2010]{35K15, 35R05}

\keywords{Second-order systems, bounded mean oscillation, variably partially BMO coefficients, Sobolev spaces}

\begin{abstract}
We establish the solvability of second order divergence type parabolic systems in Sobolev spaces.
The leading coefficients are assumed to be only measurable in one spatial direction
on each small parabolic cylinder with the spatial direction allowed to depend on the cylinder.
In the other orthogonal directions and the time variable, the coefficients have locally small mean oscillations. We also obtain the corresponding $W^1_p$-solvability of second order elliptic systems in divergence form. This type of systems arises from the problems of linearly elastic laminates and composite materials. Our results are new even for scalar equations and the proofs differ from and simplify the methods used previously in \cite{DongKim08a}.
As an application, we improve a result by Chipot, Kinderlehrer, and Vergara-Caffarelli \cite{CKC} on gradient estimates for elasticity system $D_\alpha (A^{\alpha\beta}(x_1) D_\beta \vu)=\vf$, which typically arises in homogenization of layered materials. We relax the condition on $\vf$ from $H^k, k\geq d/2$, to $L_p$ with $p>d$.

\end{abstract}

\maketitle

\mysection{Introduction}
                                        \label{secIntro}

In this paper we prove the unique solvability of divergence type fully coupled parabolic and elliptic systems in Sobolev spaces when the leading coefficients are in the class of {\em variably partially BMO (bounded mean oscillation)} functions. The distinguishing feature of variably partially BMO coefficients is that they are allowed to be very irregular with respect to one spatial direction, and the spatial direction needs to be determined only locally. Thus the systems studied in this paper may model deformations in composite media as fiber-reinforced materials (see, e.g. \cite{CKC} and \cite{LiNi}).

There are many papers concerning elliptic and parabolic
equations/systems in Sobolev spaces with VMO (vanishing mean
oscillation) or BMO type coefficients. Chiarenza, Frasca, and
Longo first proved the interior estimate for non-divergence form
elliptic equations with VMO coefficients \cite{CFL1}. Then the
solvability of elliptic and parabolic equations in Sobolev space
were presented in \cite{CFL2} and \cite{BC93}. These are the
earliest papers about non-divergence type equations with VMO
coefficients. For divergence type equations with
VMO/BMO coefficients, results of similar type were
obtained in \cite{DFG, AuscherQafsaoui}, and later in \cite{MR2187159, MR2399163}.
On the other hand, in \cite{Krylov_2005, Krylov_2007_mixed_VMO}
Krylov gave a unified approach to investigating the
$L_p$-solvability of both divergence and non-divergence form
parabolic and elliptic equations with coefficients BMO in the
spatial variables (and merely measurable in $t$ in the parabolic case). For other related
results, we also refer the reader to
\cite{P95,PS1,KiLe,HHH,AcMi,DHP07}, and references therein.

To 
describe the class of coefficients in this paper, we first mention
{\em partially BMO coefficients}, which are characterized as having no regularity assumptions with respect to one (fixed) variable
and having locally small mean oscillations with respect to the other variables.
This class of coefficients was first introduced in \cite{KimKrylov07}, where the $W^2_p$-solvability of elliptic equations in non-divergence form were obtained by adapting some ideas in \cite{Krylov_2005}\footnote{In fact, the authors of \cite{KimKrylov07} considered partially VMO coefficients, which are only measurable in one fixed variable and have vanishing mean oscillations in the other variables.
However, in the same spirit as in \cite{Krylov_2005, Krylov_2007_mixed_VMO} the proofs there also work for equations with partially BMO coefficients.
}. Since then, non-divergence type equations/systems with partially VMO/BMO coefficients
have been considered in \cite{KK2, Kim07, Kim07a, Kim07b, DongKrylov}.
Partially BMO coefficients are quite general so that they include VMO coefficients as in \cite{CFL1, CFL2, BC93}
as well as BMO coefficients as in \cite{MR2187159, MR2399163}.
As to divergence type equations, the authors of this paper proved in \cite{DongKim08a} the $W_p^1$-solvability of elliptic equations with partially BMO coefficients.
Then parabolic equations as well as systems in divergence form were treated in \cite{Dong08,DongKim08b}.
It should be mentioned that, in the non-divergence case, some related problems were studied about forty years ago, for example, in \cite{Chi, Lo72b, Salsa}. In \cite{Chi}, Chiti proved the $W^2_2$-solvability for elliptic equations with coefficients which are measurable functions of one variable alone.
In \cite{Lo72b,Salsa} Lorenzi and Salsa obtained the $W^2_p$-solvability for elliptic and parabolic equations, respectively, when the coefficients are constants in half-spaces and discontinuous across the dividing hyperplane.

In this paper, we investigate systems with {\em variably}
partially BMO coefficients, which is a generalization of partially
BMO coefficients. This class of coefficients was first introduced
by Krylov in \cite{Kr08} for non-divergence type
elliptic equations in the whole space and $p\in (2,\infty)$.
Variably partially BMO coefficients are measurable in one spatial
direction and have small mean oscillation in the other directions
via a diffeomorphism on each small cylinder (or ball in the
elliptic case). Diffeomorphisms may be chosen differently for each
cylinder, so the direction in which coefficients are only
measurable (have no regularity assumption) may vary from one
cylinder to another. In other words, there is no global fixed
direction, with respect to which the coefficients are only
measurable. It is easily seen that the class of partially BMO
coefficients is a special case of variably partially BMO
coefficients with the identity diffeomorphism. Later, Krylov's
result in \cite{Kr08} was extended to non-divergence type
parabolic equations with similar type of coefficients and any
$p\in (1,\infty)$ in \cite{Dong08b}.

With the coefficients described above, we establish in this paper 
the solvability in Sobolev spaces for {\em divergence} type 
parabolic and elliptic systems, which generalize the results of \cite{DongKim08a}.
In particular, in contrast to \cite{Kr08, Dong08b, DongKim08a} we deal with systems,
so our results extend all results in \cite{DongKim08a} to the system case.
As an application, we improve a result in \cite{CKC} for linearly elastic laminates; see Section \ref{secapp}.

Our arguments are based on $L_2$-oscillation estimates of the derivatives of solutions,
and then applying a generalized Fefferman-Stein theorem proved by Krylov in \cite{Kr08}.
However, there are additional difficulties
due to the divergence structure of the equations/systems
and the fact that coefficients are (locally) only measurable in one direction.

To overcome the difficulty due to the divergence structure, in
\cite{DongKim08a} we used a scaling argument. Roughly speaking, we
considered a rescaled function $u(\mu^{-1} x_1,x')$ instead of
$u(x_1,x')$ to get a priori estimates of $Du$, where $\mu$ is a
large constant and $(x_1,x') \in \bR^d$. The coefficients
considered in this paper, as noted above, have no specific fixed
direction to which we can apply the scaling argument. This
prompted us to develop a new method, the key step of which is
to estimate the mean oscillations of $U:=\sum_{j=1}^d a^{1j} D_j
u$ and $D_iu$, $i = 2, \cdots,d,$ instead of the full gradient of
$u$, if the given equation is
$$
D_i(a^{ij}D_ju) = \Div g.
$$
Applying our new method to the equations in \cite{DongKim08a},
it not only removes the necessity of the scaling procedure,
but also simplifies the proofs there.
Moreover, the method allows us to treat {\em systems}, whereas in \cite{DongKim08a, Dong08}
we were only able to deal with scalar equations due to the fact that a certain change of variables had to be used.

Compared to  elliptic equations considered in \cite{DongKim08a},
another obstacle in the parabolic case is in the estimate of
$\|\vu_t\|_{L_2}$. In contrast to the case of non-divergence form
equations, the estimate of  $\|\vu_t\|_{L_2}$ does not follow
directly from those of spatial derivatives of $\vu$. To circumvent
this obstacle, in Lemma \ref{lem10.44} we use an iteration
argument combined with suitably chosen weights. For equations with
symmetric coefficient matrices, a simpler proof can be found in
\cite{Dong08}.

Unlike \cite{DongKim08a}, 
in which equations are considered in the whole space, a half space and a bounded domain,
here we only concentrate on equations in the whole space for the simplicity of the presentation.
For a discussion about different approaches for equations with VMO, BMO, or partially BMO coefficients,
see \cite{DongKim08a} and references therein.

The paper is organized as follows. We introduce some notation and
present the main results in Section \ref{secMain}. In Section
\ref{sec4} we prove some preliminary estimates, which
are necessary in the proofs of the main results presented in
Section \ref{sec5}. In Section \ref{sec6} we make some remarks on elliptic systems with
some less regularity assumptions on diffeomorphisms. Finally in
Section \ref{secapp}, we consider a system of linear laminates and
improve a regularity result obtain in \cite{CKC}.

\mysection{Notation and main results}
                                        \label{secMain}

In this paper we consider the following parabolic system
\begin{equation}
                                                \label{parabolic}
\cP \vu-\lambda \vu=\Div \vg + \vf,
\end{equation}
where $\lambda\ge 0$ is a constant, $\vg=(\vg_1,\vg_2,\cdots,\vg_d)$, and
$$
\cP \vu=-\vu_t+D_\alpha(A^{\alpha\beta}D_\beta \vu)+D_\alpha(B^{\alpha}\vu)+\hat B^{\alpha}D_\alpha \vu+ C \vu.
$$
The coefficients $A^{\alpha\beta}$, $B^\alpha$, $\hat B^\alpha$, $C$ are $m \times m$ matrices,
which are bounded and measurable, and
the leading coefficients $A^{\alpha\beta}$ are uniformly elliptic.
Note that
$$
\vu = (u^1, \cdots, u^m)^{\text{tr}},
\quad
\vg_{\alpha} = (g^1_\alpha, \cdots, g^m_\alpha)^{\text{tr}},
\quad
\vf = (f^1,\cdots,f^m)^{\text{tr}}
$$
are (column) vector-valued functions defined on
$$
(S,T) \times \bR^{d} = \set{(t,x):t\in (S,T),\, x=(x_1,\cdots,x_d)\in \bR^d},
$$
where $-\infty \le S < T \le \infty$.
For given $\vg$ and $\vf \in L_p$, $1<p<\infty$, we seek a unique solution $\vu$
in the parabolic Sobolev space $\cH_p^1$ (for a definition of $\cH_p^1$,
see Section \ref{secMain}).

We also consider the following elliptic system
\begin{equation}
                                                \label{elliptic}
\cL \vu-\lambda \vu=\Div \vg + \vf,
\end{equation}
where
$$
\cL \vu=D_\alpha(A^{\alpha\beta}D_\beta \vu)+D_\alpha(B^{\alpha}\vu)+\hat B^{\alpha}D_\alpha \vu+ C \vu.
$$
In this case $A^{\alpha\beta}$, $B^\alpha$, $\hat B^\alpha$, $C$, $\vg$, and $\vf$
are independent of $t$ and satisfy the same conditions as in the parabolic case.
Naturally, the solution space is $W_p^1$.

\subsection{Notation and function spaces}
The following notation will be used throughout the paper.
Let $m,d\geq 1$ be integers. A typical point in $\bR^{d}$ is denoted by $x=(x_1,\cdots,x_d)=(x_1,x')$.
We set
$$
D_{\alpha}\vu=\vu_{x_\alpha},\quad D_{\alpha\beta}\vu=\vu_{x_\alpha x_\beta},\quad
D_t \vu = \vu_t.
$$
By $D\vu$ and $D^{2}\vu$ we
mean the gradient and the Hessian matrix
of $\vu$. On many occasions we need to take these objects relative to only part of variables.
In such cases,
we use the following notation:
$$
D_{x'}\vu=\vu_{x'},\quad
D_{x_1x'}\vu=\vu_{x_1x'},\quad
D_{xx'}\vu=\vu_{xx'},
$$
where, for example, $D_{xx'}\vu$ means
one of $\vu_{x_\alpha x_\beta}$, $\alpha=1,\cdots,d$, $\beta=2,\cdots,d$,
or the whole collection of them.

Throughout the paper, we always assume that $1 < p < \infty$
unless explicitly specified otherwise.
By $N(d,m,p,\cdots)$ we mean that $N$ is a constant depending only
on the prescribed quantities $d,m, p,\cdots$.

For a function $f(t,x)$ in $\bR^{d+1}$,
we set $(f)_{\cD}$
to be the average of $f$ over an open set $\cD$ in $\bR^{d+1}$, i.e.,
\begin{equation*}
(f)_{\cD} = \frac{1}{|\cD|} \int_{\cD} f(t,x) \, dx \, dt
= \dashint_{\cD} f(t,x) \, dx \, dt,
\end{equation*}
where $|\cD|$ is the
$d+1$-dimensional Lebesgue measure of $\cD$.

For $-\infty\leq S<T\leq \infty$, we denote
\begin{align*}
\cH^{1}_{p}((S,T)\times \bR^d)&=(1-\Delta)^{1/2}W_{p}^{1,2}((S,T)\times \bR^d),\\
\bH^{-1}_{p}((S,T)\times \bR^d)&=(1-\Delta)^{1/2}L_{p}((S,T)\times \bR^d).
\end{align*}
For any $T\in (-\infty,\infty]$, we use
\begin{equation*}
\bR_T=(-\infty,T), \quad \bR_T^{d+1}=\bR_T\times \bR^d
\end{equation*}
to abbreviate, for example, $L_p((-\infty,T) \times \bR^d)=L_p(\bR_T^{d+1})$.
When $T=\infty$, we frequently use the abbreviations $L_p=L_p(\bR^{d+1})$, $\cH^1_p=\cH^1_p(\bR^{d+1})$, etc.

Set
$$
B_r'(x') = \{ y \in \bR^{d-1}: |x'-y'| < r\}, \quad
B_r(x) = \{ y \in \bR^d: |x-y| < r\},
$$
$$
Q_r'(t,x) = (t-r^2,t) \times B_r'(x'),\quad
Q_r(t,x) = (t-r^2,t) \times B_r(x),
$$
and
$$
B_r'=B_r'(0),\quad
B_r = B_r(0),\quad
Q_r'=Q_r'(0,0),\quad
Q_r=Q_r'(0,0).
$$
As above, $|B_r'|$, $|B_r|$, $|Q_r'|$, and $|Q_r|$ mean the volume of $B_r'$, $B_r$, $Q_r'$, and $Q_r$ respectively.

For a function $g$ defined on $\bR^{d+1}$,
we denote its (parabolic) maximal and sharp function, respectively, by
\begin{align*}
\bM g (t,x) &= \sup_{Q\in \cQ: (t,x) \in Q}
\dashint_{Q} | g(s,y) | \, dy \, ds,\\
g^{\#}(t,x) &= \sup_{Q\in \cQ:(t,x) \in Q}
\dashint_{Q} | g(s,y) - (g)_Q | \, dy \, ds,
\end{align*}
where $\cQ$ is the collection of all cylinders in $\bR^{d+1}$, i.e.
$$
\cQ=\set{Q_r(t,x): (t,x) \in \bR^{d+1}, r \in (0, \infty)}.
$$


\subsection{Main results}
Let us first state our assumptions on the coefficients precisely. We assume that all the coefficients are bounded and measurable, and $A^{\alpha\beta}$ are uniformly elliptic, i.e. there exist $\delta\in (0,1]$ and $K\ge 1$ such that for any vectors $\xi_{\alpha}=(\xi^i_{\alpha}) \in \bR^m,\alpha=1,\cdots,d$ and any $(t,x)\in \bR^{d+1}$ we have
\begin{equation}
                                                \label{ellipticity}
\delta \sum_{\alpha=1}^d\sum_{i=1}^{m} |\xi_{\alpha}^i|^2
\le \sum_{\alpha,\beta=1}^d\sum_{i, j =1}^{m}
A_{ij}^{\alpha \beta}(t,x) \xi^i_{\alpha} \xi^j_{\beta},\quad
|A^{\alpha\beta}(t,x)| \le \delta^{-1},
\end{equation}
$$
|B^\alpha(t,x)|\le K,
\quad
|\hat B^\alpha(t,x)|\le K,
\quad
|C(t,x)|\le K,
$$
where
$\alpha,\beta=1,2,\cdots,d$.

Denote by $\mathcal A$ the set of $m d\times m d$
matrix-valued measurable functions $\bar A = (\bar A^{\alpha\beta}(y_1))$ of one spatial variable such that \eqref{ellipticity} holds with $\bar A$ in place of $A$.

Let $\Psi$ be the set of $C^{1,1}$ diffeomorphisms  $\psi: \bR^{d} \to \bR^{d}$ such that the mappings $\psi$  and $\phi=\psi^{-1}$ satisfy
\begin{equation}
                                    \label{eq10.54}
|D\psi|+|D^2\psi| \le\delta^{-1},\quad
|D\phi|+|D^2\phi| \le\delta^{-1}.
\end{equation}

\begin{assumption}[$\gamma$]                          \label{assump2}
There exists a positive constant $R_0\in(0,1]$ such that, for
any parabolic cylinder $Q$ of radius less than $R_0$,
one can find an $\bar A\in \mathcal A$ and a  $\psi = (\psi_1,\cdots,\psi_d)\in\Psi$ such that
\begin{equation}
                            \label{eq3.07}
\int_Q |A(t,x)-\bar A( \psi_1(x))|\, dx \,dt
\le\gamma|Q|.
\end{equation}
\end{assumption}

Now we state the main results of this paper.
Our first theorem is about the solvability of \eqref{parabolic} in $\bR^{d+1}_T$.
\begin{theorem}
                                            \label{mainthm1}
Let $p\in (1,\infty)$ and $T \in (-\infty, \infty]$ and $ \vu \in \cH_p^{1}(\bR^{d+1}_T)$.
Then there exist constants $\gamma=\gamma(d,m,p,\delta)>0$, and $\lambda_0 \ge 0$ and $N>0$, depending only
on $d$, $m$, $p$, $R_0$, $\delta$ and $K$,
such that under Assumption \ref{assump2} ($\gamma$) the following assertions hold.

\noindent
(i) For any $ \vu \in \cH_p^{1}(\bR^{d+1}_T)$, we have
\begin{equation}
                                            \label{apriori1}
\lambda \|  \vu \|_{L_p(\bR^{d+1}_T)}
+ \sqrt{\lambda} \|D\vu\|_{L_p(\bR^{d+1}_T)}
+ \|  \vu_{t} \|_{\bH^{-1}_p(\bR^{d+1}_T)}
\le N(\sqrt \lambda+1) \| \cP  \vu - \lambda  \vu \|_{\bH^{-1}_p(\bR^{d+1}_T)}
\end{equation}
for all $\lambda \ge \lambda_0$.

\noindent
(ii) For any $\lambda > \lambda_0$ and
$ \vf$,  $\vg\in L_p(\bR^{d+1}_T)$,
there exists a unique $ \vu \in \cH_{p}^{1}(\bR^{d+1}_T)$ solving
$$
\cP \vu - \lambda  \vu =  \Div \vg + \vf
$$
in $\bR_T^{d+1}$.
Moreover, $\vu$ satisfies the estimate
$$
\lambda \|  \vu \|_{L_p(\bR^{d+1}_T)}
+ \sqrt{\lambda} \|D\vu\|_{L_p(\bR^{d+1}_T)}
+ \|  \vu_{t} \|_{\bH^{-1}_p(\bR^{d+1}_T)}
\le N\sqrt\lambda\|\vg\|_{L_p(\bR^{d+1}_T)} +N\|\vf\|_{L_p(\bR^{d+1}_T)}.
$$
\end{theorem}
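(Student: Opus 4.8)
\emph{Overall strategy and reductions.} The plan is to first prove the a priori estimate (i) and then obtain the solvability (ii) by the method of continuity. Several reductions come first. It suffices to treat $T=\infty$: the case $T\in(-\infty,\infty)$ follows from it by the standard argument exploiting that $\cP-\lambda$ does not see $\{t\ge T\}$, together with the definition of $\cH^1_p(\bR^{d+1}_T)$, exactly as in \cite{DongKim08a}. Next, after a parabolic dilation $t\mapsto\lambda t$, $x\mapsto\sqrt\lambda\,x$ one may normalize $\lambda$; and since $\|\Div\vg+\vf\|_{\bH^{-1}_p}\le N(\|\vg\|_{L_p}+\|\vf\|_{L_p})$, the lower-order coefficients $B^\alpha,\hat B^\alpha,C$ contribute only $N(\|\vu\|_{L_p}+\|D\vu\|_{L_p})$ to $\|\cP\vu-\lambda\vu\|_{\bH^{-1}_p}$, which is absorbed into the left-hand side once $\lambda_0$ is large (depending on $K$). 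Hence it is enough to establish (i) for the principal operator $\cP_0\vu=-\vu_t+D_\alpha(A^{\alpha\beta}D_\beta\vu)$, the inhomogeneous estimate in (ii), with $\vg$ and $\vf$ tracked separately, being obtained along the way.

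\emph{Mean oscillation estimates and the key new device.} The core is an $L_2$-averaged oscillation estimate for the spatial derivatives of solutions, of the kind assembled in Section \ref{sec4}. Given a solution $\vu$ of $\cP_0\vu-\lambda\vu=\Div\vg+\vf$ on a large cylinder, fix a small parabolic cylinder $Q$ of radius $r<R_0$, pick $\bar A\in\mathcal A$ and $\psi\in\Psi$ as in Assumption \ref{assump2}($\gamma$), change variables by $y=\psi(x)$ so as to flatten the level sets of $\psi_1$, and compare $\vu$ with the solution $\vw$ of the model system $-\vw_t+D_\alpha(\tilde A^{\alpha\beta}(y_1)D_\beta\vw)-\lambda\vw=0$, whose coefficients depend on $y_1$ only. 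The decisive new point — which removes the scaling device of \cite{DongKim08a} and makes the system case accessible, since no change of the unknown is required — is that instead of the full gradient one controls the oscillations of
\[
\cU=\Bigl(\sum_{j=1}^d\tilde A^{1j}(y_1)D_j\vw,\ D_2\vw,\ \dots,\ D_d\vw\Bigr).
\]
The tangential derivatives $D_k\vw$, $k\ge 2$, again solve a system of the same type because the coefficients do not depend on $y'$, while the first component — the conormal flux in the $y_1$ direction — has its $y_1$-derivative expressed through $\vw_t$, $\lambda\vw$, and tangential derivatives of already controlled quantities; hence $\cU$ enjoys interior parabolic Hölder-type estimates even though $\tilde A$ is merely measurable in $y_1$. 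Since $\tilde A^{11}$ is invertible with bounded inverse by \eqref{ellipticity}, $|D\vu|$ and $|\cU|$ are comparable. Undoing $\psi$ and combining the comparison estimate, the Caccioppoli inequality, and the $\gamma$-smallness \eqref{eq3.07} yields, on cylinders of radius $\le R_0$, a pointwise bound of the sharp function of $D\vu$ by $N\kappa^{-N}\bigl(\bM(|\vf|^2+\lambda^2|\vu|^2+|\vg|^2)\bigr)^{1/2}$ plus a coefficient that can be made arbitrarily small, by first choosing $\kappa\in(0,1)$ small and then $\gamma$ small, times $\bigl(\bM|D\vu|^2\bigr)^{1/2}$.

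\emph{Closing the estimate; the time derivative.} Feeding this bound into the generalized Fefferman--Stein theorem of \cite{Kr08} yields, for $p\ge 2$ and $\lambda\ge\lambda_0$, control of $\lambda\|\vu\|_{L_p}+\sqrt\lambda\|D\vu\|_{L_p}$ by $\sqrt\lambda\|\vg\|_{L_p}+\|\vf\|_{L_p}$, provided $\gamma$ is small enough that the $D\vu$-term is absorbed. The range $1<p<2$ follows by duality: the formal adjoint of $\cP_0-\lambda$ is a parabolic operator of the same form with the transposed coefficient matrices, which again satisfies Assumption \ref{assump2}($\gamma$), so the case $p>2$ for the adjoint gives the estimate for the conjugate exponent. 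Finally, $\|\vu_t\|_{\bH^{-1}_p}$: unlike the non-divergence case this does not follow directly from the bound on $D\vu$, and it is supplied by Lemma \ref{lem10.44}, whose proof iterates over dyadic scales with suitably chosen weights to boost a crude bound on $\vu_t$ into the full $\bH^{-1}_p$ estimate (for symmetric $A$ a shorter argument is available, cf. \cite{Dong08}).

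\emph{Solvability and the main obstacle.} With (i) in hand, (ii) follows by the method of continuity applied to the family $(1-s)(-\partial_t+\Delta)+s\cP_0$, $s\in[0,1]$: every member is uniformly elliptic and satisfies Assumption \ref{assump2}($\gamma$), estimate (i) holds for all of them with a common constant, and solvability at $s=0$, the heat system, is classical; a partition of unity in $x$ passes from the local model considerations to global solvability on $\bR^{d+1}$, and the lower-order terms are reinstated for a larger $\lambda_0$. I expect the main obstacle to be the model-problem oscillation estimate of the second step — isolating the right vector $\cU$ of quantities that stays regular when the coefficients are only measurable in one (a priori unknown) direction, and pushing the comparison and perturbation arguments through for systems without any substitution. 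The $\|\vu_t\|_{\bH^{-1}_p}$ bound via Lemma \ref{lem10.44} is the secondary technical hurdle.
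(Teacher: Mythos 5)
Your proposal follows essentially the same route as the paper: after the local diffeomorphism, control the mean oscillations of the Jacobian-weighted conormal flux $JU$ with $U=\sum_\beta\hat A^{1\beta}D_{y_\beta}\vv$ together with the tangential derivatives $J\vu_\beta$, $\beta\ge2$ (Lemmas \ref{lem3}--\ref{lemma100}, Corollary \ref{cor4.2}, Proposition \ref{thm4.4}), feed these into Krylov's generalized Fefferman--Stein theorem (Theorem \ref{generalStein}) to get Proposition \ref{thm5.4} for $p>2$, handle $1<p<2$ by duality, and finish with Agmon's trick, a partition of unity, and the method of continuity. Two small slips worth flagging: the dilation parameter $\kappa$ is taken \emph{large} (so $\kappa^{-1/2}$ is small), not in $(0,1)$; and the $\|\vu_t\|_{\bH^{-1}_p}$ term in \eqref{apriori1} follows directly from the equation once $\|D\vu\|_{L_p}$ and $\|\vu\|_{L_p}$ are bounded, whereas the weighted dyadic iteration you describe is the argument of Lemma \ref{lem3} (not Lemma \ref{lem10.44}, which is the Caccioppoli inequality) and is used earlier to get the interior $L_2$ bound on $\vu_t$ entering the H\"older estimate for $U$.
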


The next result is regarding the initial value problem of \eqref{parabolic}.
For $-\infty<S<T\le \infty$ we define $\mathring{\cH}_{q,p}^{1}((S,T)\times\Omega)$ to be the subspace of $\cH_{q,p}^{1}((S,T)\times\Omega)$ consisting of functions satisfying $\vu\chi_{t\ge S}\in \cH_{q,p}^{1}((-\infty,T)\times\Omega)$.

\begin{theorem}
                                            \label{mainthm2}
Let $p\in (1,\infty)$, $T \in (0, \infty)$.
Then there exists a constant $\gamma>0$ depending only
on $d$, $m$, $p$ and $\delta$,
such that under Assumption \ref{assump2} ($\gamma$), for any
$\vf$, $\vg\in L_p((0,T)\times\bR^{d})$,
there exists a unique $\vu \in \mathring{\cH}_{p}^{1}((0,T)\times\bR^{d})$ satisfying
$$
\cP \vu=  \Div \vg + \vf
$$
in
$(0,T)\times \bR^{d}$. Moreover, there is a constant $N$ depending only on $d$, $m$, $p$, $T$, $R_0$, $\delta$ and $K$ such that
$$
\|\vu\|_{\cH^1_{p}((0,T)\times \bR^d)}
\le N\left(\|\vf\|_{L_{p}((0,T)\times \bR^d)}+\|\vg\|_{L_{p}((0,T)\times \bR^d)}\right).
$$
\end{theorem}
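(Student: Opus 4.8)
The plan is to deduce Theorem \ref{mainthm2} from Theorem \ref{mainthm1} by the standard two-step reduction: first remove the parameter $\lambda$ by an exponential change of the unknown in the time variable, and then encode the zero initial condition by extending functions by zero to negative times, so that the problem on $(0,T)\times\bR^d$ becomes one on $\bR^{d+1}_T$ of the type already solved.

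First I would fix $\lambda:=\lambda_0+1$, with $\lambda_0$ the constant from Theorem \ref{mainthm1} (so $\lambda$ depends only on $d,m,p,R_0,\delta,K$), and keep the same $\gamma=\gamma(d,m,p,\delta)$ so that Assumption \ref{assump2} ($\gamma$) is exactly what is needed below. Given $\vf,\vg\in L_p((0,T)\times\bR^d)$, set $\tilde\vf=e^{-\lambda t}\vf$ and $\tilde\vg=e^{-\lambda t}\vg$. Since $T<\infty$, the factor $e^{-\lambda t}$ is bounded above and below on $(0,T)$, so $\|\tilde\vf\|_{L_p}+\|\tilde\vg\|_{L_p}$ and $\|\vf\|_{L_p}+\|\vg\|_{L_p}$ are comparable with constants depending only on $\lambda$ and $T$. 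Because $e^{\lambda t}$ is a function of $t$ alone, multiplication by it commutes with the spatial operators in $\cP$, and the identity $\cP(e^{\lambda t}\vv)=e^{\lambda t}(\cP\vv-\lambda\vv)$ shows that $\vu$ solves $\cP\vu=\Div\vg+\vf$ on $(0,T)\times\bR^d$ if and only if $\vv:=e^{-\lambda t}\vu$ solves $\cP\vv-\lambda\vv=\Div\tilde\vg+\tilde\vf$ there; moreover $\vu\mapsto e^{-\lambda t}\vu$ is a bounded isomorphism of $L_p$, of $\bH^{-1}_p$, of $\cH^1_p((0,T)\times\bR^d)$, and of $\mathring{\cH}_p^1((0,T)\times\bR^d)$, in both directions, with constants depending only on $\lambda$ and $T$ (the zero trace at $t=0$ is preserved since $e^{-\lambda\cdot0}=1\neq0$). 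This reduces Theorem \ref{mainthm2} to the same statement for $\cP\vv-\lambda\vv=\Div\tilde\vg+\tilde\vf$.

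For that, extend $\tilde\vf$ and $\tilde\vg$ by zero to $\bR^{d+1}_T$. By Theorem \ref{mainthm1} (ii) with the fixed $\lambda>\lambda_0$ there is a unique $\vv\in\cH^1_p(\bR^{d+1}_T)$ solving $\cP\vv-\lambda\vv=\Div\tilde\vg+\tilde\vf$ in $\bR^{d+1}_T$, and, since $\lambda$ is now a fixed constant, the three terms on the left-hand side of \eqref{apriori1} control $\|\vv\|_{\cH^1_p(\bR^{d+1}_T)}$, giving
$$
\|\vv\|_{\cH^1_p(\bR^{d+1}_T)}\le N\bigl(\|\tilde\vf\|_{L_p}+\|\tilde\vg\|_{L_p}\bigr),\qquad N=N(d,m,p,R_0,\delta,K).
$$
On $\bR^{d+1}_0$ the right-hand side vanishes, so the uniqueness assertion of Theorem \ref{mainthm1} applied with $T=0$ forces $\vv\equiv0$ for $t\le0$; hence $\vv\chi_{t\ge0}=\vv$, i.e.\ $\vv\in\mathring{\cH}_p^1((0,T)\times\bR^d)$, and the restriction of $\vv$ to $(0,T)\times\bR^d$ is the desired solution, with the desired estimate after multiplying back by the bounded factor $e^{\lambda t}$. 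Uniqueness in Theorem \ref{mainthm2} follows in the same way: if $\vv\in\mathring{\cH}_p^1((0,T)\times\bR^d)$ solves the homogeneous equation with zero data, its extension by zero lies in $\cH^1_p(\bR^{d+1}_T)$ and solves $\cP\vv-\lambda\vv=0$ on $\bR^{d+1}_T$, so $\vv\equiv0$ by Theorem \ref{mainthm1}.

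I expect the only point requiring care to be the equivalence, in the distributional formulation, between solving on $(0,T)\times\bR^d$ with zero initial data and solving the zero-extended equation on $\bR^{d+1}_T$: one must verify that extension by zero of a $\mathring{\cH}_p^1$ solution commutes with $\cP$, which is precisely where the vanishing of the trace at $t=0$ built into $\mathring{\cH}_p^1$ is used (no surface term appears in the distributional $\vv_t$), together with the fact that multiplication by the $t$-only factor $e^{\pm\lambda t}$ is bounded on $\bH^{-1}_p$ so that the $\vv_t$ component transforms correctly. Both are routine but should be spelled out.
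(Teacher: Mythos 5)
Your proof is correct and follows essentially the same route as the paper's own sketch: the exponential substitution $\vv=e^{-(\lambda_0+1)t}\vu$ to pass to $\cP-\lambda$ with $\lambda>\lambda_0$, extension of the data by zero for $t<0$, solving on $\bR^{d+1}_T$ via Theorem \ref{mainthm1}, and invoking uniqueness on $\bR^{d+1}_0$ to conclude that the solution vanishes for $t\le 0$ and hence lies in $\mathring{\cH}_p^1$. You have simply filled in the details the paper leaves implicit (in particular, that the right-hand side must also be multiplied by $e^{-\lambda t}$, and the verification that zero-extension is compatible with the distributional formulation).
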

Indeed, by considering $\vv:=e^{-(\lambda_0+1)t}\vu $ instead of $\vu$ the operator $\cP$ becomes $\cP-(\lambda_0+1)I$. Now we extend $\vf$ and $\vg$ to be zero for $t<0$ and solve the system for $\vv$ in $\bR^{d+1}_T$ using Theorem \ref{mainthm1}. By the uniqueness, we have $\vv=0$ when $t\le 0$. Thus $\vu=e^{(\lambda_0+1)t}\vv $ solves the original initial value problem and the estimate follows as well.

As a consequence of Theorem \ref{mainthm1}, we obtain the $W^1_p$-solvability of elliptic systems \eqref{elliptic} with variably partially BMO coefficients with locally small BMO semi-norms.

\begin{assumption}[$\gamma$]                          \label{assump3}
There exists a positive constant $R_0\in(0,1]$ such that, for
any ball $B$ of radius less than $R_0$, one can find an $\bar A\in \mathcal A$ and a  $\psi = (\psi_1,\cdots,\psi_d)\in \Psi$ such that
\begin{equation*}
\int_B |A(x)-\bar A( \psi_1(x))|\, dx
\le\gamma|B|.
\end{equation*}
\end{assumption}

\begin{theorem}
                                            \label{mainthm3}
Let $p\in (1,\infty)$. Then there exist constants $\gamma=\gamma(d,m,p,\delta)>0$, and $\lambda_0 \ge 0$, $N>0$ depending only on $d$, $m$, $p$, $K$, $\delta$ and $R_0$
such that under Assumption \ref{assump3} ($\gamma$) the following assertions hold.

\noindent
(i) For any $ \vu \in W_p^{1}(\bR^{d})$, we have
\begin{equation*}
\lambda \|  \vu \|_{L_p(\bR^{d})}
+ \sqrt{\lambda} \|D\vu \|_{L_p(\bR^{d})}
\le N(\sqrt \lambda+1) \| \cL  \vu - \lambda  \vu \|_{\bH^{-1}_p(\bR^{d})}
\end{equation*}
for all $\lambda \ge \lambda_0$.

\noindent
(ii) For any $\lambda > \lambda_0$ and
$\vf$, $\vg\in L_p(\bR^{d})$,
there exists a unique $ \vu \in W_{p}^{1}(\bR^{d})$ solving
\eqref{elliptic} in $\bR^d$.
Moreover, $\vu$ satisfies the estimate
\begin{equation*}
\lambda \|  \vu \|_{L_p(\bR^{d})}
+ \sqrt{\lambda} \|D\vu\|_{L_p(\bR^{d})}
\le N\sqrt \lambda\|\vg\|_{L_p(\bR^{d})}+N\|\vf\|_{L_p(\bR^{d})}.
\end{equation*}
\end{theorem}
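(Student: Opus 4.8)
The plan is to deduce Theorem \ref{mainthm3} from the parabolic Theorem \ref{mainthm1}, by regarding a function on $\bR^d$ as a time-independent function on $\bR^{d+1}$ multiplied by an exponential weight in $t$ that makes it lie in $\cH^1_p(\bR^{d+1})$. Two preliminary observations. First, if the time-independent coefficient $A=A(x)$ satisfies Assumption \ref{assump3}($\gamma$), then the associated time-independent coefficients satisfy Assumption \ref{assump2}($\gamma$): for a parabolic cylinder $Q_r(t_0,x_0)$ with $r<R_0$, taking the $\bar A,\psi$ furnished by Assumption \ref{assump3} on $B_r(x_0)$,
\[
\int_{Q_r(t_0,x_0)}|A(x)-\bar A(\psi_1(x))|\,dx\,dt
=r^2\int_{B_r(x_0)}|A(x)-\bar A(\psi_1(x))|\,dx
\le r^2\gamma|B_r|=\gamma|Q_r(t_0,x_0)|.
\]
Hence Theorem \ref{mainthm1} is available for the parabolic operator $\cP$ built from these coefficients, with the same $\gamma=\gamma(d,m,p,\delta)$ and with $\lambda_0,N$ as there; enlarging $\lambda_0$, we assume $\lambda_0\ge1$. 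Second, since $(1-\Delta)^{-1/2}$ acts only in $x$, it commutes with multiplication by functions of $t$, so for $\eta=\eta(t)$ and $\phi=\phi(x)$,
\[
\|\eta\phi\|_{L_p(\bR^{d+1})}=\|\eta\|_{L_p(\bR)}\|\phi\|_{L_p(\bR^d)},\qquad
\|\eta\phi\|_{\bH^{-1}_p(\bR^{d+1})}=\|\eta\|_{L_p(\bR)}\|\phi\|_{\bH^{-1}_p(\bR^d)}.
\]

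To prove (i), I would fix $\mu>0$, set $\eta(t)=e^{-\mu|t|}$ (so $\eta,\eta'\in L_\infty(\bR)\cap L_p(\bR)$ and $|\eta'|=\mu\eta$, hence $\|\eta'\|_{L_p(\bR)}=\mu\|\eta\|_{L_p(\bR)}$), and for $\vu\in W^1_p(\bR^d)$ put $\vv(t,x)=\eta(t)\vu(x)$. Then $\vv\in\cH^1_p(\bR^{d+1})$, since $\vv,D\vv\in L_p(\bR^{d+1})$ and $\vv_t=\eta'\vu\in L_p(\bR^{d+1})\hookrightarrow\bH^{-1}_p(\bR^{d+1})$. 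Because the coefficients are independent of $t$, a direct computation gives
\[
\cP\vv-\lambda\vv=\eta\,(\cL\vu-\lambda\vu)-\eta'\,\vu.
\]
Applying Theorem \ref{mainthm1}(i) to $\vv$ with $T=\infty$, dropping the nonnegative $\|\vv_t\|_{\bH^{-1}_p}$ term, using the preliminary observations together with the embedding $L_p(\bR^d)\hookrightarrow\bH^{-1}_p(\bR^d)$ (of norm $N_0=N_0(d,p)$), and dividing by $\|\eta\|_{L_p(\bR)}>0$, we obtain, for $\lambda\ge\lambda_0$,
\[
\lambda\|\vu\|_{L_p(\bR^d)}+\sqrt\lambda\,\|D\vu\|_{L_p(\bR^d)}
\le N(\sqrt\lambda+1)\Bigl(\|\cL\vu-\lambda\vu\|_{\bH^{-1}_p(\bR^d)}+N_0\mu\|\vu\|_{L_p(\bR^d)}\Bigr).
\]
Choosing $\mu=\sqrt\lambda\,(4NN_0)^{-1}$ and using $\sqrt\lambda+1\le2\sqrt\lambda$ (as $\lambda\ge1$) makes $N(\sqrt\lambda+1)N_0\mu\le\tfrac12\lambda$, so the last term is absorbed into the left side; this is (i), with $2N$ in place of $N$.

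For (ii), uniqueness is immediate from (i). Existence I would get from (i) by the method of continuity along the family $\cL_\tau=(1-\tau)\Delta+\tau\cL$, $\tau\in[0,1]$: each $\cL_\tau$ has leading coefficients satisfying \eqref{ellipticity} with the same $\delta$, lower-order coefficients bounded by $K$, and satisfies Assumption \ref{assump3}($\gamma$) with the same $\gamma$ (use $(1-\tau)I+\tau\bar A$ and the same $\psi$), so the estimate of (i) holds for every $\cL_\tau$ with constants independent of $\tau$; since $\cL_0-\lambda=\Delta-\lambda\colon W^1_p(\bR^d)\to\bH^{-1}_p(\bR^d)$ is a bounded isomorphism for $\lambda>0$ (classical Fourier-multiplier estimates), the method of continuity makes $\cL-\lambda=\cL_1-\lambda$ onto, which solves \eqref{elliptic} for all $\vf,\vg\in L_p(\bR^d)$. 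The sharp estimate in (ii) then comes from running the weight argument on the solution: $\vv=e^{-\mu|t|}\vu$ satisfies $\cP\vv-\lambda\vv=\Div(\eta\vg)+(\eta\vf-\eta'\vu)$, so Theorem \ref{mainthm1}(ii), after division by $\|\eta\|_{L_p(\bR)}$ and a choice of $\mu$ small enough to absorb the term $N\mu\|\vu\|_{L_p(\bR^d)}$, yields $\lambda\|\vu\|_{L_p(\bR^d)}+\sqrt\lambda\|D\vu\|_{L_p(\bR^d)}\le N\sqrt\lambda\|\vg\|_{L_p(\bR^d)}+N\|\vf\|_{L_p(\bR^d)}$.

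Beyond Theorem \ref{mainthm1} there is no serious difficulty; the two points needing care are the precise choice of the weight parameter $\mu$ — of order $\sqrt\lambda$ for (i) and small for the estimate in (ii) — so that the zeroth-order term produced by $\vv_t=\eta'\vu$ is absorbed without spoiling the sharp dependence on $\lambda$, and the verification that the operators $\cL_\tau$ in the homotopy satisfy the hypotheses with constants uniform in $\tau$, so that the continuity method applies.
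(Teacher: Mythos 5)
Your argument is correct and fills in exactly the details that the paper omits by citing Krylov's Theorem 2.6: regarding time-independent functions as parabolic solutions via an exponential weight $e^{-\mu|t|}$, using the product structure of the $\bH^{-1}_p$-norm for separable functions, and then absorbing the $\eta'\vu$ term by tuning $\mu$. This is the standard ``steady-state solutions of parabolic systems'' reduction the paper refers to, and your treatment of the method of continuity, the uniformity of the constants in $\tau$, and the two different choices of $\mu$ (of order $\sqrt\lambda$ for part (i), small in $\lambda$ for the sharp estimate in (ii)) are all in order.
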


Theorem \ref{mainthm3} is deduced from Theorem \ref{mainthm1} by using the idea that solutions to elliptic systems can be viewed as steady state solutions to parabolic systems. We omit the details and refer the reader to the proof of Theorem 2.6 \cite{Krylov_2005}. In Section \ref{sec6}, we shall give an outline of the proof of Theorem \ref{mainthm3} under a weaker regularity assumption on $\psi$ and $\phi$.

\begin{remark}
The $\cH^{1}_p$-solvability results in this paper admit the extension to the mixed norm spaces $\cH^{1}_{q,p}$ by following the idea in \cite{Krylov_2007_mixed_VMO}; see also, for instance,  \cite{Kim07b} and \cite{Dong08}. Here we do not pursue this, and leave it to the interested reader.
\end{remark}

\begin{remark}
                        \label{rm2.7}
As an application of Theorem \ref{mainthm1} and \ref{mainthm3}, one can obtain the
solvability of parabolic and elliptic systems on a half space or a bounded Lipschitz domain with a small Lipschitz constant, with either the homogeneous Dirichlet boundary condition or the conormal derivative boundary condition. For systems on a half space,  near the boundary we require $A^{\alpha\beta}$ to be measurable in the normal direction and have locally small mean oscillation in the other directions. For systems on a Lipschitz domain, near the boundary we require $A^{\alpha\beta}$ to have locally small mean oscillation in the spatial directions (and measurable in $t$ in the parabolic case; cf. Theorem 5.1 \cite{DongKim08b}). In both cases, $A^{\alpha\beta}$ is also assumed to satisfy Assumption \ref{assump2} (or Assumption \ref{assump3} in the elliptic case) in the interior of the domain. We omit the detail and refer interested readers to the discussions in \cite{DongKim08a}.
\end{remark}

\begin{remark}
                        \label{rem2.8}
Although our proofs rely heavily on the linear structure of the systems, it is still possible to extend some results in this paper to some quasilinear equations. Let us consider the following parabolic equation
\begin{equation}
                        \label{eq12.14.14}
u_t-D_i\left(A_{ij}(t,x,u) D_j u + a_i(t,x,u)\right) = b(t,x,u,\nabla u),
\end{equation}
under the so-called controlled growth conditions on lower order terms:
$$
|a_i(t,x,u)| \le \mu_1 (|u|^{\lambda_1} + f),\quad
|b(t,x,u,\nabla u)| \le \mu_2 (|\nabla u|^{\lambda_2} + |u|^{\lambda_3} + g),
$$
for some constants $\mu_1,\mu_2>0$, where
$\lambda_1= \frac{d+2}{d}$, $\lambda_2= \frac{d+4}{d+2}$, $\lambda_3= \frac{d+4}{d}$,
and
$$
f \in L_{\sigma},
\quad
g\in L_{\tau},
\quad
\sigma\in (d+2,\infty),
\quad
\tau\in (d/2+1,\infty).
$$
In a recent paper \cite{DK10}, we studied the regularity of the solution $u$ to the equation \eqref{eq12.14.14} under the assumption that $A_{ij}(t,x,z)$ are BMO with small mean oscillations in $x$ for any fixed $z$, and uniformly continuous in $z$ for any fixed $(t,x)$. The corresponding results for elliptic equations were obtained by Palagachev \cite{Pala09} under slightly stronger growth conditions. By combining the arguments in \cite{DK10} and in this paper, one can obtain the interior $W^1_p$-estimate for \eqref{eq12.14.14} under the assumption that $A_{ij}(t,x,z)$ are variably partially BMO with small mean oscillations in $x$ for any fixed $z$, and uniformly continuous in $z$ for any fixed $(t,x)$. It would be interesting to know if some of the methods in this paper can be adapted to more general quasilinear equations and systems, e.g., the $p$-Laplace type equations considered in \cite{AcMi}.
\end{remark}

\mysection{Estimates of mean oscillations}      \label{sec4}

In this section we assume $B=\hat B=0$ and $C=0$.
The main objective of this section is to estimate the $L_2$-oscillations
of solutions to $\cP \vu = \Div \vg$, which is the key ingredient in the proofs of our main results.
We start with the well-known $\cH^1_2$-solvability of \eqref{parabolic} with measurable coefficients.

\begin{lemma}
                                \label{lem0}
\mbox{}

\noindent
(i) Let $T\in (-\infty,\infty]$ and $\lambda\ge 0$. Assume $\vu\in \cH^1_2(\bR^{d+1}_T)$ and $\cP \vu-\lambda \vu=\Div \vg+\vf$, where $\vf$, $\vg\in L_2(\bR^{d+1}_T)$. Then just under the uniform ellipticity condition (with no regularity assumption on $A^{\alpha\beta}$), there exists a constant $N=N(d,m,\delta)$ such that
\begin{equation*}
\sqrt\lambda \|D\vu\|_{L_2(\bR^{d+1}_T)}+\lambda \|\vu\|_{L_2(\bR^{d+1}_T)}\leq N\sqrt\lambda\|g\|_{L_2(\bR^{d+1}_T)}
+N\|\vf\|_{L_2(\bR^{d+1}_T)}.
\end{equation*}
If $\lambda=0$ and $\vf=0$, we have
\begin{equation*}
 \|D\vu\|_{L_2(\bR^{d+1}_T)}\leq N\|\vg\|_{L_2(\bR^{d+1}_T)}.
\end{equation*}

\noindent
(ii) For $\lambda> 0$ and any $\vf$, $\vg\in L_2(\bR^{d+1}_T)$, there exists a unique $\vu\in \cH^1_2(\bR^{d+1}_T)$ solving $\cP \vu-\lambda \vu=\Div \vg+\vf$ in $\bR^{d+1}_T$.

\noindent
(iii) Let $T\in (0,\infty)$. For any $\vf$, $\vg\in L_2((0,T)\times\bR^{d})$, there exists a unique $\vu\in \mathring\cH^1_2((0,T)\times\bR^{d})$ solving $\cP \vu=\Div \vg+\vf$ in $(0,T)\times\bR^{d}$.
Moreover,
\begin{equation*}
\|\vu\|_{\cH^1_2((0,T)\times\bR^{d})}\leq N\|\vg\|_{L_2((0,T)\times\bR^{d})}
+N\|\vf\|_{L_2((0,T)\times\bR^{d})},
\end{equation*}
where $N=N(d, m, \delta,T)>0$. If $\vf=0$, we have
\begin{equation*}
 \|D\vu\|_{L_2((0,T)\times\bR^{d})}\leq N\|\vg\|_{L_2((0,T)\times\bR^{d})},
\end{equation*}
where $N=N(d,m,\delta)>0$
\end{lemma}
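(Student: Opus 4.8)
The plan is to establish everything via standard energy estimates together with the classical $\cH^1_2$-theory (Galerkin or the Lax--Milgram / Lions method in the parabolic setting), since at this stage no regularity of $A^{\alpha\beta}$ is assumed beyond boundedness and uniform ellipticity \eqref{ellipticity}. For part (i), suppose first $T=\infty$, $\lambda>0$, and $\vu\in\cH^1_2$ with $\cP\vu-\lambda\vu=\Div\vg+\vf$. The natural thing is to test the equation against $\vu$ itself: pairing $-\vu_t$ with $\vu$ gives zero (after integration in $t$ over $\bR$, using $\vu\in\cH^1_2$ so that $\vu(t,\cdot)\to0$ in an appropriate sense as $|t|\to\infty$), pairing $D_\alpha(A^{\alpha\beta}D_\beta\vu)$ with $\vu$ yields $-\int A^{\alpha\beta}D_\beta\vu\cdot D_\alpha\vu\le-\delta\|D\vu\|_{L_2}^2$ by ellipticity, and the right-hand side $\int(\Div\vg+\vf)\cdot\vu=-\int\vg\cdot D\vu+\int\vf\cdot\vu$ is controlled by $\|\vg\|_{L_2}\|D\vu\|_{L_2}+\|\vf\|_{L_2}\|\vu\|_{L_2}$. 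Collecting terms gives $\delta\|D\vu\|_{L_2}^2+\lambda\|\vu\|_{L_2}^2\le\|\vg\|_{L_2}\|D\vu\|_{L_2}+\|\vf\|_{L_2}\|\vu\|_{L_2}$, and Young's inequality (absorbing $\frac{\delta}{2}\|D\vu\|_{L_2}^2$ and $\frac{\lambda}{2}\|\vu\|_{L_2}^2$) yields
$$
\|D\vu\|_{L_2}^2+\lambda\|\vu\|_{L_2}^2\le N\big(\|\vg\|_{L_2}^2+\lambda^{-1}\|\vf\|_{L_2}^2\big),
$$
with $N=N(d,m,\delta)$. Taking square roots and noting $\sqrt\lambda\|D\vu\|_{L_2}\le\sqrt\lambda\cdot N(\|\vg\|_{L_2}+\lambda^{-1/2}\|\vf\|_{L_2})=N(\sqrt\lambda\|\vg\|_{L_2}+\|\vf\|_{L_2})$ and similarly for $\lambda\|\vu\|_{L_2}$ gives the stated inequality. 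For general $T\in(-\infty,\infty]$ one works on $\bR^{d+1}_T$; the boundary term at $t=T$ has the right sign (it contributes $+\frac12\int_{\bR^d}|\vu(T,x)|^2\,dx\ge0$ when integrating $-\vu_t\cdot\vu$ in $t$ up to $T$), so it can simply be discarded. The case $\lambda=0$, $\vf=0$ is the same computation with the $\lambda\|\vu\|_{L_2}^2$ term absent: $\delta\|D\vu\|_{L_2}^2\le\|\vg\|_{L_2}\|D\vu\|_{L_2}$ gives $\|D\vu\|_{L_2}\le N\|\vg\|_{L_2}$ directly.

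For part (ii), existence and uniqueness of $\vu\in\cH^1_2(\bR^{d+1}_T)$ for $\lambda>0$ is the standard solvability result for parabolic equations/systems with bounded measurable coefficients; I would invoke the Lions--Lax--Milgram theorem for the space-time bilinear form, or equivalently the Galerkin method, the coercivity being exactly the energy estimate just derived (the $\lambda>0$ term making the form coercive on $\cH^1_2$ uniformly, and the backward heat operator $-\partial_t$ handled by the standard time-regularization). Uniqueness is immediate from the a priori estimate in (i) applied to the difference of two solutions with $\vg=\vf=0$. For part (iii), the finite-time initial value problem on $(0,T)\times\bR^d$ with zero initial data: one extends $\vf,\vg$ by zero to $t<0$, applies (ii) with some large $\lambda$ to get a solution $\vv$ of $\cP\vv-\lambda\vv=\Div\vg+\vf$ on $\bR^{d+1}_T$, checks by uniqueness that $\vv=0$ for $t\le0$ (since the extended data vanish there, and the backward parabolic structure propagates this — here one uses that $\cP$ involves $-\partial_t$, so vanishing for small $t$ together with the equation forces $\vv\equiv0$ there), then sets $\vu=e^{\lambda t}\vv$ to remove the $\lambda$ term, obtaining a solution of $\cP\vu=\Div(e^{\lambda t}\vg)+e^{\lambda t}\vf$; adjusting notation (or rather solving directly with the $e^{\lambda t}$ weight built into the data) gives the claim, and the constant $N$ then depends on $T$ through $e^{\lambda T}$. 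The final estimate with $N=N(d,m,\delta,T)$ follows by combining the $\cH^1_2$-norm bound from (ii) with this exponential rescaling; when $\vf=0$ the cleaner bound $\|D\vu\|_{L_2}\le N(d,m,\delta)\|\vg\|_{L_2}$ comes from the $\lambda=0,\vf=0$ case of (i) applied on $\bR^{d+1}_T$ after the zero extension, with no $T$-dependence.

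The only genuinely delicate point is the existence statement in (ii)–(iii): one must be careful that $\cH^1_2(\bR^{d+1}_T)=(1-\Delta)^{1/2}W^{1,2}_2(\bR^{d+1}_T)$ is the correct solution space and that the weak formulation of $\cP\vu-\lambda\vu=\Div\vg+\vf$ (with $\vf,\vg\in L_2$, so the right-hand side in $\bH^{-1}_2$) is well-posed there; this is precisely where one cites the now-classical parabolic $L_2$-theory rather than reproving it. I expect this to be the main ``obstacle'' only in the bookkeeping sense — it is standard (see, e.g., the references on VMO/BMO parabolic theory cited in the introduction, or Krylov \cite{Krylov_2005}) — while the a priori estimates in (i) are elementary energy computations. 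Everything else (the sign of boundary terms, the zero-extension argument, the exponential rescaling in $\lambda$) is routine.
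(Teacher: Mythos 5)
Your proposal is correct and follows the standard route. In fact the paper does not prove Lemma \ref{lem0} at all --- it is presented as ``the well-known $\cH^1_2$-solvability,'' i.e.\ cited as classical --- so there is no in-paper argument to compare against, but what you wrote is precisely the standard energy-method proof that such a citation presupposes. Part (i) is the basic energy identity $\frac12\int_{\bR^d}|\vu(T)|^2\,dx+\int A^{\alpha\beta}D_\beta\vu\cdot D_\alpha\vu+\lambda\|\vu\|^2=\int\vg\cdot D\vu-\int\vf\cdot\vu$ followed by ellipticity and Young's inequality (and, as you note, the trace term at $t=T$ has the favorable sign and can be dropped); part (ii) is Lions/Galerkin plus the a priori bound; part (iii) is the zero extension in time and uniqueness on $\bR^{d+1}_0$, with the exponential weight $e^{\lambda t}$ handling the absence of a zeroth-order term, exactly as the paper itself does when reducing Theorem \ref{mainthm2} to Theorem \ref{mainthm1}. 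One small terminological nit: you attribute the vanishing of $\vv$ on $t\le0$ to ``the backward parabolic structure,'' but with the sign convention $\cP=-\partial_t+D_\alpha(A^{\alpha\beta}D_\beta\cdot)+\cdots$ the Cauchy problem is well-posed \emph{forward} in time; the correct justification, which you also state, is simply uniqueness of the zero solution in $\cH^1_2(\bR^{d+1}_0)$ from part (ii) (or part (i) with $\vf=\vg=0$). You might also record explicitly, for the $\cH^1_2$-norm bound in (iii), that the bound on $\|\vu_t\|_{\bH^{-1}_2}$ comes for free from the equation itself once $\|D\vu\|_{L_2}$ and $\|\vu\|_{L_2}$ are controlled, since $\vu_t=D_\alpha(A^{\alpha\beta}D_\beta\vu)-\Div\vg-\vf$ and all the coefficients are bounded.
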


We recall the following Caccioppoli-type inequality for parabolic systems in divergence form.

\begin{lemma}
                                            \label{lem10.44}
Let $0<r<R<\infty$.
Assume $\vu\in \cH^1_{2,\text{loc}}$ and
$\cP \vu=\Div \vg+\vf$ in $Q_{R}$, where $\vf$, $\vg\in L_2(Q_R)$. Then there exists a constant $N=N(d,m,\delta)$ such that
\begin{equation*}
\|D\vu\|_{L_2(Q_r)}\leq N\left(\|\vg\|_{L_2(Q_R)}
+(R-r)\|\vf\|_{L_2(Q_R)}+(R-r)^{-1}\|\vu\|_{L_2(Q_R)}\right).
\end{equation*}
\end{lemma}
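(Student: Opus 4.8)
\textbf{Proof proposal for Lemma \ref{lem10.44}.}

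The plan is to run the standard energy (Caccioppoli) argument for divergence-form parabolic systems, paying attention to the time derivative term which, unlike in the non-divergence case, cannot simply be discarded. First I would fix a smooth cutoff function $\zeta(t,x)$ with $\zeta \equiv 1$ on $Q_r$, $\zeta \equiv 0$ outside $Q_{(r+R)/2}$ (in both the spatial and parabolic-time sense), vanishing at the bottom of the cylinder, and satisfying $|D\zeta| \le N(R-r)^{-1}$, $|\zeta_t| \le N(R-r)^{-2}$. One then tests the equation $\cP\vu = \Div\vg + \vf$ against $\zeta^2 \vu$. Since $\vu \in \cH^1_{2,\text{loc}}$, the pairing $\langle \vu_t, \zeta^2 \vu\rangle$ makes sense and, integrating by parts in $t$, produces $-\tfrac12 \int \zeta^2 \tfrac{d}{dt}|\vu|^2 = \tfrac12\int \zeta \zeta_t |\vu|^2 \ge 0$-type terms plus a nonnegative boundary contribution at the top time slice; the key point is that this term has a favorable sign and can be moved to the left or bounded by $N(R-r)^{-2}\|\vu\|_{L_2(Q_R)}^2$.

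Next I would use ellipticity \eqref{ellipticity} on the principal term: testing $D_\alpha(A^{\alpha\beta}D_\beta\vu)$ against $\zeta^2\vu$ and integrating by parts gives $-\int A^{\alpha\beta}D_\beta\vu \cdot D_\alpha(\zeta^2\vu)\,dx\,dt$, whose leading piece $-\int \zeta^2 A^{\alpha\beta}D_\beta\vu\cdot D_\alpha\vu$ is bounded above by $-\delta\int \zeta^2 |D\vu|^2$, while the cross term $-2\int \zeta A^{\alpha\beta} D_\beta\vu \cdot (D_\alpha\zeta)\vu$ is handled by Cauchy--Schwarz and the $\delta^{-1}$ bound on $A$, absorbing an $\varepsilon\int\zeta^2|D\vu|^2$ into the good term and leaving $N\varepsilon^{-1}(R-r)^{-2}\|\vu\|_{L_2(Q_R)}^2$. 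The right-hand side contributions are: $\int \Div\vg \cdot \zeta^2\vu = -\int \vg \cdot D_\alpha(\zeta^2\vu) = -\int \zeta^2 \vg\cdot D\vu - 2\int \zeta \vg\cdot (D\zeta)\vu$, estimated by $\varepsilon\int\zeta^2|D\vu|^2 + N\varepsilon^{-1}\|\vg\|_{L_2(Q_R)}^2 + N(R-r)^{-1}\|\vg\|_{L_2(Q_R)}\|\vu\|_{L_2(Q_R)}$, and $\int \vf\cdot\zeta^2\vu \le \|\vf\|_{L_2(Q_R)}\|\vu\|_{L_2(Q_R)}$. Collecting terms, choosing $\varepsilon$ small relative to $\delta$, and using $\zeta \equiv 1$ on $Q_r$ yields
$$
\|D\vu\|_{L_2(Q_r)}^2 \le N\Big(\|\vg\|_{L_2(Q_R)}^2 + \|\vf\|_{L_2(Q_R)}\|\vu\|_{L_2(Q_R)} + (R-r)^{-1}\|\vg\|_{L_2(Q_R)}\|\vu\|_{L_2(Q_R)} + (R-r)^{-2}\|\vu\|_{L_2(Q_R)}^2\Big),
$$
and an application of Young's inequality (writing $\|\vf\|\|\vu\| \le (R-r)^2\|\vf\|^2 + (R-r)^{-2}\|\vu\|^2$ and similarly for the $\vg$ cross term) gives exactly the claimed bound, up to taking square roots.

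The main subtlety I expect is the rigorous justification of the integration by parts in $t$ for $\langle \vu_t, \zeta^2\vu\rangle$ when $\vu$ is only in $\cH^1_{2,\text{loc}}$ — in particular, that $\vu_t \in \bH^{-1}_{2,\text{loc}}$ pairs correctly against $\zeta^2\vu \in \cH^1_2$ and that the resulting identity $\langle \vu_t, \zeta^2\vu\rangle = -\tfrac12\int \zeta_t\, 2\zeta |\vu|^2\,dx\,dt + (\text{nonneg.\ top-slice term})$ holds. This is standard (it follows from a density/mollification argument in $t$, or by quoting the usual parabolic embedding $\cH^1_2 \hookrightarrow C([S,T];L_2)$ locally), and since Lemma \ref{lem0} already provides the $\cH^1_2$-theory, one may simply invoke the known formula. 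Everything else is routine Cauchy--Schwarz and Young's inequality bookkeeping; the lemma itself is classical and is only recalled here for later use.
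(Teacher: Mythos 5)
Your proposal is correct and follows essentially the same Caccioppoli/energy argument as the paper: test against $\zeta^2\vu$, integrate by parts, use ellipticity and Young's inequality, and absorb the $\delta$-small $\|D\vu\zeta\|_{L_2}^2$ term. The only quibble is the initial description of the cutoff (``$\zeta\equiv 0$ outside $Q_{(r+R)/2}$ in the parabolic-time sense'' is inconsistent with $\zeta\equiv 1$ on $Q_r$ up to $t=0$), but you immediately self-correct by specifying that $\zeta$ vanishes only at the bottom of the cylinder and by treating the nonnegative top-slice term correctly, which matches the paper's choice of $\zeta$ supported in $(-R^2,R^2)\times B_R$.
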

\begin{proof}
We provide a sketchy proof for the sake of completeness. Take a $\zeta\in C_0^{\infty}$ such that
$$
\zeta
= \left\{\begin{aligned}
1 \quad &\text{on} \quad Q_r\\
0 \quad &\text{on} \quad \bR^{d+1} \setminus (-R^2, R^2) \times B_{R}
\end{aligned}\right.
$$
and
$$
| D\zeta| \le N (R-r)^{-1},
\quad
| \zeta_t | \le N (R-r)^{-2}.
$$
After multiplying both sides of the system by $\zeta^2\vu$ and integrating on $Q_R$, we get
$$
\int_{Q_R}\vu_t\cdot\vu\zeta^2\,dx\,dt+
\int_{Q_R}D_\alpha(\vu\zeta^2)\cdot(A^{\alpha\beta}D_\beta\vu)
\,dx\,dt=
-\int_{Q_R}(\Div \vg+\vf)\cdot \vu\zeta^2\,dx\,dt.
$$
Integrating by parts and using Young's inequality yield
$$
\int_{Q_R}D_\alpha(\vu)\cdot(A^{\alpha\beta}D_\beta\vu)\zeta^2
\,dx\,dt
$$
\begin{equation}
                            \label{eq12.08}
\le  N \int_{Q_R}\left((R-r)^{-2}|\vu|^2+ (R-r)^{2}|\vf|^2+|\vg|^2\right)\,dx\,dt
+\frac \delta 2
\int_{Q_R}|D\vu|^2\zeta^2\,dx\,dt,
\end{equation}
where $N=N(d,m,\delta)>0$.
To finish the proof, it suffices to use \eqref{ellipticity} and absorb the last term on the right-hand side of \eqref{eq12.08} to the left-hand side.
\end{proof}

On account of the above lemma,
we make a frequent use of the following argument.
Let $A^{\alpha\beta}= A^{\alpha\beta}(x_1)$, that is, they are functions of  $x_1 \in \bR$ only.
Also let $\vu \in C_0^{\infty}$ and assume $\cP\vu = 0$ in $Q_R$.
Then since $D_{\alpha}\vu $, $\alpha = 2, \cdots,d$,
satisfies $\cP (D_{\alpha}\vu) = 0$ in $Q_R$,
by the above lemma, it follows that
$$
\| D D_{\alpha}\vu \|_{L_2(Q_{r_1})} \le N \| D_{\alpha}\vu \|_{L_2(Q_{r_2})}
\le N \| \vu \|_{L_2(Q_R)},
$$
where $r_1 < r_2 < R$ and $N$ depends only on $d$, $m$, $\delta$, and radii $r_1$, $r_2$, $R$.
If we further consider $D_{\alpha\beta}\vu$, $\alpha, \beta = 2,\cdots,d$, which satisfies
$\cP (D_{\alpha\beta}\vu) = 0$ in $Q_R$, again by the above lemma
$$
\| D D_{\alpha\beta}\vu \|_{L_2(Q_{r_0})} \le N(d,m,\delta,r_0,r_1) \| D_{\alpha\beta}\vu \|_{L_2(Q_{r_1})},
$$
where $r_0 < r_1$.
By combining the above two inequalities we obtain
$$
\| D D_{\alpha\beta}\vu \|_{L_2(Q_{r_0})} \le N \| D\vu \|_{L_2(Q_R)},
$$
where we may also have $\|\vu\|_{L_2(Q_R)}$ instead of $\| D\vu \|_{L_2(Q_R)}$ on the right-hand side.
By repeating the same reasoning, we have
$$
\| D D_{x'}^k \vu \|_{L_2(Q_r)} \le N \| D\vu\|_{L_2(Q_R)},
$$
where $k$ is a positive integer, $r<R$ and $N=N(d,m,\delta,r,R,k)$.
Considering the derivatives of $\vu$ in time
as well, in general we have the following lemma.
With an additional assumption that the coefficient matrices are symmetric, a similar result was proved in \cite{Dong08}.


%



\begin{lemma}
                                    \label{lem3}
Let $0<r<R<\infty$ and $A^{\alpha\beta}=A^{\alpha\beta}(x_1)$, $\alpha,\beta=1,2,\cdots,d$. Assume $\vu\in C_{\text{loc}}^\infty$ satisfies
\begin{equation}
                    \label{eq2.16}
\cP \vu=0.
\end{equation}
in $Q_R$. Then we have
\begin{equation*}
\|D_t^i D_{x'}^j\vu\|_{L_2(Q_r)}
+ \|D_t^i D_{x'}^jD_{1}\vu\|_{L_2(Q_r)}\le N\|D\vu\|_{L_2(Q_R)},
\end{equation*}
where $i,j$ are nonnegative integers satisfying $i+j\ge 1$ and $N=N(d,m,\delta, R, r, i,j)$.
\end{lemma}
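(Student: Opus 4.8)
The plan is to proceed by induction on $i$, the order of the time derivative, with the case $i=0$ handled first by the iterated Caccioppoli argument already spelled out in the text just before the statement. For $i=0$ and arbitrary $j$, one observes that since the coefficients $A^{\alpha\beta}$ depend only on $x_1$, tangential derivatives $D_{x'}^k\vu$ again solve $\cP(D_{x'}^k\vu)=0$ in any slightly smaller cylinder; applying Lemma \ref{lem10.44} repeatedly on a finite chain of nested cylinders $Q_r\subset Q_{r_1}\subset\cdots\subset Q_R$ gives $\|D D_{x'}^j\vu\|_{L_2(Q_r)}\le N\|D\vu\|_{L_2(Q_R)}$, which in particular controls both $\|D_{x'}^j\vu\|_{L_2(Q_r)}$ (crudely, after also bounding by $\|D\vu\|$ via Poincar\'e/Caccioppoli or simply noting $D_{x'}^j\vu = D_{x'}^{j-1}(D_{x'}\vu)$ for $j\ge 1$, and the $j=0$ case is trivial) and $\|D_{x'}^j D_1\vu\|_{L_2(Q_r)}$.

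For the inductive step, suppose the estimate holds with $i$ replaced by $i-1$ for all $j$. The key point is that $\vu_t$ also solves the same homogeneous system, $\cP(\vu_t)=0$ in $Q_R$, because the coefficients are independent of $t$; hence by the inductive hypothesis applied to $\vv:=\vu_t$ on a cylinder $Q_{R'}$ with $r<R'<R$,
\[
\|D_t^{i-1}D_{x'}^j\vu_t\|_{L_2(Q_r)}+\|D_t^{i-1}D_{x'}^jD_1\vu_t\|_{L_2(Q_r)}\le N\|D\vu_t\|_{L_2(Q_{R'})}.
\]
It then remains to bound $\|D\vu_t\|_{L_2(Q_{R'})}$ by $\|D\vu\|_{L_2(Q_R)}$. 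This is where the only genuine subtlety lies: unlike spatial derivatives, a time derivative is not controlled by a naive Caccioppoli estimate, since differentiating the equation in $t$ and testing against $\zeta^2\vu_t$ produces a term $\int \vu_{tt}\cdot\vu_t\zeta^2$ that cannot simply be integrated by parts in time against a bounded quantity. To get around this one uses the equation itself: from $\cP\vu=0$ we have $\vu_t=D_\alpha(A^{\alpha\beta}D_\beta\vu)$, so that $\vu_t\in \cH^{-1}$-type quantities are expressed through second spatial derivatives, and differentiating, $D\vu_t=D D_\alpha(A^{\alpha\beta}D_\beta\vu)$. Since $A^{\alpha\beta}=A^{\alpha\beta}(x_1)$, all derivatives falling on the coefficients are $x_1$-derivatives only, and after expanding we see $D\vu_t$ is a combination of $D D_{x'}^{\le1}D\vu$ together with terms involving $D_1$-derivatives of coefficients times $D D_\beta\vu$. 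The purely tangential and mixed second-order pieces are controlled by the $i=0$ case already established (with $\|D\vu\|_{L_2(Q_R)}$ on the right), while the $x_1\,x_1$ second derivative $D_{11}\vu$ is recovered from the equation as $A^{11}D_{11}\vu = \vu_t - (\text{lower } x_1\text{-order and tangential terms})$, i.e. it too is controlled once $\vu_t$ and the tangential derivatives are controlled — so a short algebraic manipulation using ellipticity of $A^{11}$ closes the loop.

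Thus the structure is: (1) $i=0$ via iterated Lemma \ref{lem10.44}; (2) observe $\cP(\vu_t)=0$; (3) bound $\|D\vu_t\|_{L_2}$ on an intermediate cylinder by rewriting $\vu_t$ and $D\vu_t$ through the equation in terms of (already controlled) spatial derivatives, exploiting crucially that the coefficients are $x_1$-dependent only; (4) induct on $i$. I expect step (3) — the passage from spatial regularity to the time derivative estimate via the equation, keeping careful track that differentiation of coefficients only ever costs $x_1$-derivatives — to be the main obstacle, and it is precisely the analogue of the difficulty the authors flag for $\|\vu_t\|_{L_2}$ in the introduction; here, however, it is easier because we are in the smooth setting with $A=A(x_1)$ and may freely differentiate. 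A technical remark: throughout one shrinks the radius by a fixed small amount at each of the finitely many steps, absorbing the resulting constants into $N=N(d,m,\delta,R,r,i,j)$.
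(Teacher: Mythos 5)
Your overall structure is right and matches the paper's: handle $i=0$ by iterated Caccioppoli on tangential derivatives, observe that $\vu_t$ solves $\cP\vu_t=0$ (since $A$ is $t$-independent), and reduce the whole statement to a single estimate $\|\vu_t\|_{L_2(Q_r)}\le N\|D\vu\|_{L_2(Q_R)}$. But your step (3), the key estimate for $\vu_t$, has a genuine gap. You propose to expand $\vu_t=D_\alpha(A^{\alpha\beta}D_\beta\vu)$ and then differentiate once more, writing $D\vu_t$ as a sum of already-controlled second derivatives of $\vu$ plus ``terms involving $D_1$-derivatives of coefficients,'' and you justify this by saying ``we are in the smooth setting with $A=A(x_1)$ and may freely differentiate.'' That is precisely where the argument fails: $A^{\alpha\beta}(x_1)$ are only bounded measurable in $x_1$ (smoothness is assumed of $\vu$, not of $A$), so $D_1A^{\alpha\beta}$ does not exist, and the constant $N$ in the lemma is required to depend on $A$ only through the ellipticity constant $\delta$. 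Mollifying $A$ does not rescue this: any estimate costing $\|D_1A\|_\infty$ blows up when the mollification is removed. The related observation that $D_{11}\vu$ can be ``recovered from the equation'' suffers the same defect --- what the equation gives you without differentiating $A$ is $D_1U$, not $D_{11}\vu$, and $U$ is introduced in the paper for exactly this reason; moreover Lemma \ref{lemma01}, which controls $D_1U$, is itself proved using Lemma \ref{lem3}, so invoking it here would be circular.

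The device the paper uses for this step is a weighted Moser-type iteration that never differentiates the coefficients. Test the \emph{undifferentiated} equation against $\zeta_n^2\vu_t$ over a nested family of cylinders $Q^{(n)}$ with radii increasing to $R$; Young's inequality gives, with $\rA_n=\|\vu_t\|_{L_2(Q^{(n)})}$ and $\rB=\|D\vu\|_{L_2(Q_R)}$, the bound $\rA_n\le\epsilon\|D\vu_t\|_{L_2(\tilde Q^{(n)})}+N(2^n+\epsilon^{-1})\rB$; then Lemma \ref{lem10.44} applied to $\vu_t$ (which also solves $\cP\vu_t=0$) gives $\|D\vu_t\|_{L_2(\tilde Q^{(n)})}\le N_12^n\rA_{n+1}$; choosing $\epsilon=(3N_12^n)^{-1}$ yields $\rA_n\le\rA_{n+1}/3+N2^n\rB$, and summing with weights $3^{-n}$ gives $\rA_0\le N\rB$. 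You should replace your step (3) with this iteration; the rest of your reduction then goes through.
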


\begin{proof}
Note that $\cP (D_t^i D_{x'}^j \vu) = 0$ in $Q_R$.
Thus, thanks to the argument shown before this lemma,
it suffices to prove
$$
\|\vu_t\|_{L_2(Q_r)}\le N(d,m,\delta,R,r)\|D\vu\|_{L_2(Q_R)}.
$$
Set
$$r_0 = r,
\quad
r_n = r +\sum_{k=1}^n\frac{R-r}{2^k},
\quad
n = 1, 2, \cdots,
$$
$$
s_n=\frac{r_n+r_{n+1}}2,\quad
Q^{(n)} = Q_{r_n},\quad
\tilde Q^{(n)} = Q_{s_n},
\quad
n = 0, 1, 2, \cdots.
$$
We choose $\zeta_n(t,x) \in C_0^{\infty}$ such that
$$
\zeta_n
= \left\{\begin{aligned}
1 \quad &\text{on} \quad Q_n\\
0 \quad &\text{on} \quad \bR^{d+1} \setminus (-s_{n}^2, s_{n}^2) \times B_{s_{n}}
\end{aligned}\right.
$$
and
$$
| D\zeta_n| \le N \frac{2^n}{R-r}.
$$
Also set
$$
\rA_n = \| \vu_t\|_{L_2(Q^{(n)})},
\quad
\rB = \| D\vu \|_{L_2(Q_R)}.
$$

After multiplying both sides of \eqref{eq2.16} from the left by $\zeta_n^2(u_t^1, \cdots, u_t^m)$ and integrating on $Q_R$, we get
$$
\sum_{i=1}^d\int_{Q_R}(u^i_t\zeta_n)^2\,dx\,dt+
\sum_{\alpha,\beta=1}^d\sum_{i,j=1}^m\int_{Q_R}D_{\alpha}(u_t^i\zeta_n^2)A^{\alpha\beta}_{ij}D_{\beta} u^j\,dx\,dt=0.
$$
From this and the Young's inequality, we have
$$
\sum_{i=1}^d\int_{Q_R}(u^i_t\zeta_n)^2\,dx\,dt
$$
$$
=-\sum_{\alpha,\beta=1}^d\sum_{i,j=1}^m
\int_{Q_R}\left(D_{\alpha}u_t^iA^{\alpha\beta}_{ij}D_{\beta}u^j\zeta_n^2+2u_t^i A^{\alpha\beta}_{ij}D_{\beta}u^j\zeta_nD_{\alpha}\zeta_n\right)\,dx\,dt
$$
$$
\le N\int_{\tilde Q^{(n)}}|D\vu_t||D\vu|\,dx\,dt+\frac 1 2 \sum_{i=1}^m\int_{Q_R}(u^i_t\zeta_n)^2\,dx\,dt+N2^{2n}\int_{Q_R}|D\vu|^2\,dx\,dt,
$$
where $N = N(d,m,\delta,R,r)$.
This inequality yields, for any $\epsilon>0$,
\begin{equation}
            \label{eq2.58}
\rA_n\le \epsilon\| D\vu_t\|_{L_2(\tilde Q^{(n)})}+N(d,m,\delta,R,r)(2^n+\epsilon^{-1})\rB.
\end{equation}
Since $\vu_t$ also satisfies \eqref{eq2.16} in $Q_R$, by Lemma \ref{lem10.44}, we have
\begin{equation}
            \label{eq3.02}
\| D\vu_t\|_{L_2(\tilde Q^{(n)})}\le N_12^n\|\vu_t\|_{L_2(Q^{(n+1)})}
=N_1 2^n\rA_{n+1},
\end{equation}
where $N_1=N_1(d,m,\delta,R,r)$.
Upon setting $\epsilon=1/(3N_1 2^n)$, we get from \eqref{eq2.58} and \eqref{eq3.02} that
\begin{equation}
            \label{eq3.11}
\rA_n\le \rA_{n+1}/3+N2^n\rB.
\end{equation}
We multiply both sides of \eqref{eq3.11} by $3^{-n}$ and sum over $n$ to obtain
\begin{equation*}
\sum_{n=0}^\infty3^{-n}\rA_n\le \sum_{n=0}^\infty 3^{-n-1}\rA_{n+1}+N\sum_{n=0}^\infty (2/3)^n\rB.
\end{equation*}
Therefore,
\begin{equation*}
\rA_0\le N\sum_{n=0}^\infty (2/3)^n\rB.
\end{equation*}
The lemma is proved.
\end{proof}

Owing to the structure of the divergence form systems, the same type of inequality as in the above lemma
holds true if $\vu$ in the left-hand side is replaced by $U$,
the definition of which is
\begin{equation}                            \label{eq100}
U:=\sum_{\beta=1}^d A^{1\beta}D_{\beta}\vu,
\quad
\text{i.e.,}
\quad
U^i=\sum_{\beta=1}^d\sum_{j=1}^mA^{1\beta}_{ij}D_{\beta}u^j,
\quad
i = 1, \cdots, m.
\end{equation}

\begin{lemma}
                \label{lemma01}
Let $A^{\alpha\beta}=A^{\alpha\beta}(x_1)$.
Assume $\vu\in C_{\text{loc}}^\infty$ satisfies
\eqref{eq2.16}
in $Q_4$.
Then, for nonnegative integers $i,j$
\begin{equation}
            \label{eq03}
\| D_t^i D^j_{x'} U \|_{L_2(Q_2)}
+ \| D_t^i D^j_{x'} D_1 U \|_{L_2(Q_2)}
\le N \|D\vu\|_{L_2(Q_4)},
\end{equation}
where $N=N(d, m, \delta, i, j)>0$.\footnote{After we finished the paper, we learned that a similar result for elliptic systems was proved in \cite{LiNi} by using a similar method.}
\end{lemma}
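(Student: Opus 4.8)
The plan is to reduce Lemma \ref{lemma01} to Lemma \ref{lem3} by differentiating the equation and exploiting the divergence structure, rather than re-running the energy estimates from scratch. The key observation is that because $A^{\alpha\beta}=A^{\alpha\beta}(x_1)$ depends only on $x_1$, for any multi-index involving only $t$ and $x'$ derivatives, the function $\vw:=D_t^i D_{x'}^j\vu$ again solves $\cP\vw=0$ in a slightly smaller cylinder (this is exactly the remark preceding Lemma \ref{lem3}), and moreover $D_t^i D_{x'}^j U=\sum_{\beta}A^{1\beta}D_\beta\vw$ since the coefficients commute with $D_t$ and $D_{x'}$. Thus it suffices to bound $\|V\|_{L_2(Q_\rho)}+\|D_1 V\|_{L_2(Q_\rho)}$ where $V:=\sum_\beta A^{1\beta}D_\beta\vw$ in terms of $\|D\vw\|_{L_2(Q_{\rho'})}$ for $\rho<\rho'$, and then iterate Lemma \ref{lem3} to convert $\|D\vw\|_{L_2(Q_{\rho'})}$ back into $\|D\vu\|_{L_2(Q_4)}$.

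The estimate for $\|V\|_{L_2(Q_\rho)}$ is immediate: since $|A^{1\beta}|\le\delta^{-1}$, we have $\|V\|_{L_2(Q_\rho)}\le N\|D\vw\|_{L_2(Q_\rho)}$, and if $i+j\ge 1$ this is $\le N\|D\vu\|_{L_2(Q_4)}$ by Lemma \ref{lem3}, while if $i=j=0$ it is trivially $\le N\|D\vu\|_{L_2(Q_4)}$. The point requiring the divergence structure is the term $\|D_1 V\|_{L_2(Q_\rho)}$, because $A^{1\beta}$ is merely measurable in $x_1$, so $D_1 V$ does not exist pointwise in the naive sense; one cannot differentiate $A^{1\beta}$. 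Here I would use the equation: rewriting $\cP\vw=0$, namely $-\vw_t+D_1(A^{1\beta}D_\beta\vw)+D_{\alpha'}(A^{\alpha'\beta}D_\beta\vw)=0$ where $\alpha'$ runs over $2,\dots,d$, gives
\begin{equation*}
D_1 V = D_1\Big(\sum_{\beta=1}^d A^{1\beta}D_\beta\vw\Big)
= \vw_t - \sum_{\alpha'=2}^d D_{\alpha'}\Big(\sum_{\beta=1}^d A^{\alpha'\beta}D_\beta\vw\Big)
\end{equation*}
as distributions, and the right-hand side is controlled in $L_2(Q_\rho)$ by $\|\vw_t\|_{L_2(Q_\rho)}+\|D_{x'}D\vw\|_{L_2(Q_\rho)}$. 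Both of these are again of the form $\|D_t^{i'}D_{x'}^{j'}\vu\|_{L_2}+\|D_t^{i'}D_{x'}^{j'}D_1\vu\|_{L_2}$ with $i'+j'\ge 1$ (for the $\vw_t$ term, $i'=i+1$; for the second term, $j'=j+1$ and one derivative lands on $D_1\vu$ or on $D_{x'}\vu$), so Lemma \ref{lem3} applies with appropriately nested radii $\rho<\rho'<2$ and bounds everything by $N\|D\vu\|_{L_2(Q_4)}$.

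The main obstacle, and the only genuinely delicate point, is making the distributional identity for $D_1 V$ rigorous and justifying that $V$ indeed has a weak $x_1$-derivative lying in $L_2$ — i.e., checking that testing $V$ against $D_1\varphi$ for $\varphi\in C_0^\infty(Q_\rho)$ reproduces exactly the right-hand side above, which follows by integrating the weak formulation of $\cP\vw=0$ against test functions and regrouping the $D_1$ term. Since $\vu\in C^\infty_{\text{loc}}$ all the quantities $D_\beta\vw$, $\vw_t$, $D_{\alpha'}D_\beta\vw$ are classical and smooth, so in fact $V$ is Lipschitz in $x_1$ on compact subsets and the identity is an honest pointwise one; the role of the divergence form is purely that it lets us express the one derivative we cannot take directly ($D_1$ of the measurable-coefficient combination $V$) in terms of derivatives we can take. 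After this, one fixes a chain of radii $2=\rho_0>\rho_1>\dots>\rho_{N}$ with $N$ depending on $i,j$, applies Lemma \ref{lem3} at each stage to pass from $Q_{\rho_k}$ to $Q_{\rho_{k-1}}$, and then a final application of Lemma \ref{lem3} (or Lemma \ref{lem10.44}) to jump from $Q_{\rho_1}$ up to $Q_4$, absorbing all constants into $N=N(d,m,\delta,i,j)$. This completes the proof.
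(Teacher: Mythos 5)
Your proposal is correct and follows essentially the same route as the paper's proof: both exploit the divergence structure of $\cP\vw=0$ to write $D_1$ of the measurable-coefficient combination $\sum_\beta A^{1\beta}D_\beta\vw$ as $\vw_t-\sum_{\alpha\ge 2}A^{\alpha\beta}D_{\alpha\beta}\vw$ (using $A$'s independence of $x'$), and then reduce to Lemma \ref{lem3} via nested radii. The only cosmetic difference is that you commute $D_t^iD_{x'}^j$ past the coefficients first and then estimate $V$ and $D_1V$, whereas the paper first establishes the base case $\|U\|_{L_2(Q_2)}+\|D_1U\|_{L_2(Q_2)}\le N\|D\vu\|_{L_2(Q_3)}$ and then applies it to $D_t^iD_{x'}^j\vu$; these are logically equivalent reorderings of the same argument.
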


\begin{proof}
As before, to prove \eqref{eq03} it is enough to show
\begin{equation}                            \label{eq04}
\|U \|_{L_2(Q_2)}
+ \|D_1U \|_{L_2(Q_2)}
\le N \|D\vu\|_{L_2(Q_3)}.
\end{equation}
Indeed, if this holds true, due to the fact that $D_t^iD^j_{x'}\vu$ also satisfies \eqref{eq2.16}
we have
$$
\|D_t^iD^j_{x'}U \|_{L_2(Q_2)}
+ \|D_t^iD^j_{x'}D_1U \|_{L_2(Q_2)}
\le N \|D_t^iD^j_{x'}D\vu\|_{L_2(Q_3)}.
$$
Then by Lemma \ref{lem3}
we bound the right-hand side of the above inequality by a constant times $\| D \vu \|_{L_2(Q_4)}$,
so we arrive at the inequality \eqref{eq03}.

To prove \eqref{eq04}, we observe that in $Q_4$,
$$
D_1U = \vu_t - \sum_{\alpha=2,\beta=1}^dD_{\alpha}(A^{\alpha\beta}D_{\beta}\vu)
= \vu_t - \sum_{\alpha=2,\beta=1}^dA^{\alpha\beta}D_{\alpha\beta}\vu,
$$
where the last equality is due to the independency of $A^{\alpha\beta}$ in $x' \in \bR^{d-1}$.
Therefore,
$$
\|D_1U\|_{L_2(Q_2)}
\le N \|\vu_t\|_{L_2(Q_2)}
+ N \| D_{xx'}\vu\|_{L_2(Q_2)}.
$$
By Lemma \ref{lem3},
the right-hand side of the above inequality is bounded by a constant times $\| D \vu\|_{L_2(Q_3)}$.
Thus
$$
\|D_1U \|_{L_2(Q_2)}
\le N \|D\vu\|_{L_2(Q_3)}.
$$
It is clear that
$$
\|U \|_{L_2(Q_2)}
\le N \|D\vu\|_{L_2(Q_3)}.
$$
Therefore, the inequality \eqref{eq04} and thus Lemma \ref{lemma01} are proved.
\end{proof}

As usual, for $\mu\in (0,1)$ and a function $w$ defined on $\cD \subset \bR^{d+1}$, we denote
$$
[w]_{C^{\mu}(\cD)}
= \sup_{\substack{(t,x),(s,y)\in\cD\\(t,x)\ne(s,y)}}\frac{|w(t,x)-w(s,y)|}{|t-s|^{\mu/2}+|x-y|^{\mu}}.
$$

\begin{lemma}
                \label{lemma100}
Let $A^{\alpha\beta}=A^{\alpha\beta}(x_1)$.
Assume that $\vu\in C_{\text{loc}}^\infty$ satisfies
\eqref{eq2.16} in $Q_4$.
Then we have
\begin{align}
            \label{eq1001}
\left[U\right]_{C^{1/2}(Q_1)}
&\le N \|D\vu\|_{L_2(Q_4)},\\
            \label{eq1002}
\left[D_{x'}\vu\right]_{C^{1/2}(Q_1)}
&\le N \|D\vu\|_{L_2(Q_4)},
\end{align}
where $N=N(d,m,\delta)>0$.
\end{lemma}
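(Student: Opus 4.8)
The plan is to deduce the $C^{1/2}$-estimates from the higher-order $L_2$-bounds already established in Lemmas \ref{lem3} and \ref{lemma01} via a parabolic Sobolev embedding argument. The key observation is that $U$ and $D_{x'}\vu$, while not smooth across the $x_1$-direction (since $A^{\alpha\beta}$ is only measurable in $x_1$), do have good regularity in the remaining variables $(t,x')$; moreover, in the $x_1$-direction $U$ is controlled because $D_1 U$ can be expressed through $\vu_t$ and the second-order derivatives $D_{xx'}\vu$, as exploited in the proof of Lemma \ref{lemma01}. So the strategy is to show that $U$ and $D_{x'}\vu$ lie in a parabolic Sobolev space $W^{1,2}_2$ on $Q_2$ (one time derivative, two spatial derivatives, all in $L_2$, with the caveat that the $x_1$-direction needs special handling for $U$), with norm controlled by $\|D\vu\|_{L_2(Q_4)}$, and then invoke the embedding $W^{1,2}_2 \hookrightarrow C^{1/2}$ in dimension... — wait, this embedding fails for general $d$, so the argument must be more careful.

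The correct approach is an iteration on Sobolev exponents. First I would note that $W = U$ or $W = D_{x'}\vu$ satisfies: all derivatives $D_t^i D_{x'}^j W$ and $D_t^i D_{x'}^j D_1 W$ are in $L_2(Q_2)$ with norms bounded by $N\|D\vu\|_{L_2(Q_4)}$ — for $D_{x'}\vu$ this is immediate from Lemma \ref{lem3}, and for $U$ it is the content of Lemma \ref{lemma01}. In particular $W$, $D_1 W$, $D_{x'} W$, $D_t W$ (and higher $x',t$ derivatives) all lie in $L_2$. Then I would use the anisotropic Sobolev embedding: knowing $D_t^i D_{x'}^j W \in L_2$ for $i,j$ up to some order and $D_1 D_t^i D_{x'}^j W \in L_2$, one bootstraps through $L_q$ spaces with increasing $q$ using Sobolev inequalities in the $(x',t)$ variables (which have plenty of regularity) combined with the single available $x_1$-derivative, until reaching an exponent large enough that the standard parabolic embedding $W^{1,\text{(anisotropic)}}_q \hookrightarrow C^{1/2}$ applies on $Q_1$. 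Each bootstrapping step uses that $\cP$ commutes with $D_t$ and $D_{x'}$, so we never run out of regularity in those directions; the $x_1$-regularity is always just one derivative, but that is enough because $d+2$ "effective" derivatives in the other variables compensate.

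More concretely, I would proceed as follows. Fix intermediate radii $1 < r_1 < r_2 < 2$. On $Q_{r_2}$, the function $W$ has $D_t^i D_{x'}^j W, D_1 D_t^i D_{x'}^j W \in L_2$ for all $i + j \le M$ for $M$ as large as we like, with norms $\le N\|D\vu\|_{L_2(Q_4)}$. By the Gagliardo–Nirenberg–Sobolev inequality applied in the $(t,x')$ variables (treating $x_1$ as a parameter and then integrating), one upgrades $L_2$ to $L_{q_1}$ for some $q_1 > 2$; iterating a finite number of times one reaches $L_q$ for $q$ exceeding the parabolic dimension $d+2$. At that point, having $W \in L_q$, $DW \in L_q$, $D_t W \in L_q$ on $Q_{r_1}$ (the spatial derivatives in $x'$ come from differentiating, the one in $x_1$ from the expression for $D_1 W$, and the time derivative from $D_t W$ which again has the same structure), the standard embedding into $C^{1/2}(Q_1)$ — or more precisely $C^{1/2 - (d+2)/q}$, which for $q$ large enough exceeds exponent $1/2$, hence in particular gives $C^{1/2}$ after noting the $C^{1/2}$ semi-norm is dominated — finishes the proof. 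The main obstacle I anticipate is the bookkeeping of the anisotropic embeddings: one must carefully track which derivatives are available (all orders in $t, x'$; only first order in $x_1$) and verify that the Sobolev iteration in the $(d+1)$-dimensional "good" variables, supplemented by the single $x_1$-derivative, indeed produces enough integrability to land in a Hölder space with exponent at least $1/2$. This requires choosing $M$ large in terms of $d$ and being a little careful near the parabolic scaling (where time counts double), but it is routine once set up, and no genuinely new idea beyond Lemmas \ref{lem3} and \ref{lemma01} is needed.
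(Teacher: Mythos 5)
Your high-level reading of the situation is right: Lemmas \ref{lem3} and \ref{lemma01} supply all derivatives $D_t^i D_{x'}^j$ and $D_1 D_t^i D_{x'}^j$ of $W\in\{U,D_{x'}\vu\}$ in $L_2(Q_2)$, and Sobolev embedding must convert this to parabolic $C^{1/2}$. But the specific mechanism you propose --- bootstrapping to $L_q$ with $q$ large so that a Morrey-type embedding lands strictly above exponent $1/2$ --- has a genuine gap in the $x_1$-direction. You can never improve the integrability of $D_1W$ in the $x_1$-variable beyond $L_2$, because $A^{\alpha\beta}(x_1)$ is only measurable in $x_1$ and no second $x_1$-derivative of $W$ is available. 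Differentiating in $t$ or $x'$ never helps here, and mixed-norm considerations give at best $D_1W\in L_2\big((-1,1);L_\infty(Q')\big)$, not $D_1W\in L_q(Q)$ for $q>2$. Consequently the exponent $1/2$ in the $x_1$-direction is sharp and cannot be ``exceeded by taking $q$ large''; the isotropic Morrey embedding you invoke simply does not apply to $W$.

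The fix --- which is the paper's proof --- is to split the parabolic $C^{1/2}$-seminorm anisotropically and handle the two pieces with different Sobolev embeddings. For oscillation in $x_1$ at fixed $(t,x')$, one uses the one-dimensional embedding $W^1_2(-1,1)\hookrightarrow C^{1/2}(-1,1)$ applied to $x_1\mapsto W(t,x_1,x')$; this produces exactly $C^{1/2}$ (no improvement needed), and the resulting bound $\sup_{(t,x')\in Q_1'}\|W(t,\cdot,x')\|_{W^1_2(-1,1)}$ is controlled by a high-order Sobolev embedding in the $(t,x')$ variables applied to $W$ and $D_1W$, using the full strength of Lemmas \ref{lem3} and \ref{lemma01}. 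For oscillation in $(t,x')$ at fixed $x_1$, one reverses the roles: a high-order $W^k_2(Q_1')\hookrightarrow C^{1/2}(Q_1')$ embedding gives the Hölder seminorm in $(t,x')$, and a one-dimensional Sobolev embedding in $x_1$ (using $W$ and $D_1 W$ in $L_2(-1,1)$) supplies the uniform-in-$x_1$ control of the $W^k_2(Q_1')$-norm. You gesture at this (``carefully track which derivatives are available'') but you neither carry out the splitting nor identify that only the sharp $L_2$ embedding in $x_1$ is available, which is the crux of why the argument works at exactly exponent $1/2$ and not above.
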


\begin{proof}
We first prove \eqref{eq1001}.
By the triangle inequality, we have
$$
\sup_{\substack{(t,x), (s,y) \in Q_1\\(t,x) \ne (s,y)}}\frac{|U(t,x) - U(s,y)|}{|t-s|^{1/4}+|x-y|^{1/2}}
\le
\sup_{\substack{x_1, y_1\in(-1,1), x_1 \ne y_1\\(t,x') \in Q_1'}}\frac{|U(t,x_1,x') - U(t,y_1,x')|}{|x_1-y_1|^{1/2}}
$$
$$
+ \sup_{\substack{y_1\in(0,1)\\(t,x'), (s,y') \in Q_1'\\(t,x')\ne (s,y')}}
\frac{|U(t,y_1,x') - U(s,y_1,y')|}{|t-s|^{1/4}+|x'-y'|^{1/2}}
:= I_1 + I_2.
$$
Hence the inequality \eqref{eq1001} follows if we prove $I_i \le \|D\vu\|_{L_2(Q_4)}$, $i=1,2$.

{\em Estimate of $I_1$:} By the Sobolev embedding theorem $U(t,x_1,x')$, as a function of $x_1\in (-1,1)$, satisfies
\begin{equation}                            \label{eq01}
\sup_{\substack{x_1, y_1\in(-1,1)\\x_1 \ne y_1}}
\frac{|U(t,x_1,x')-U(t,y_1,x')|}{|x_1-y_1|^{1/2}}
\le N \| U(t,\cdot,x') \|_{W_2^1(-1,1)}.
\end{equation}
On the other hand, there exists a positive integer $k$
such that $U(t,x_1,x')$ and $D_1U(t,x_1,x')$, as functions of $(t,x') \in Q_1'$,
satisfy
$$
\sup_{(t,x') \in Q_1'}\left(|U(t,x_1,x')|+|D_1U(t,x_1,x')|\right)
$$
$$
\le \|U(\cdot,x_1,\cdot)\|_{W_2^k(Q_1')}+
\|D_1U(\cdot,x_1,\cdot)\|_{W_2^k(Q_1')}.
$$
This implies that, for all $(t,x') \in Q_1'$,
$$
\int_{-1}^1|U(t,x_1,x')|^2\, dx_1
+ \int_{-1}^1|D_1U(t,x_1,x')|^2\,dx_1
$$
$$
\le N \sum_{i \le 1, j_1+j_2\le k} \| D^i_1D^{j_1}_t D^{j_2}_{x'}U\|^2_{L_2(Q_2)}.
$$
This combined with \eqref{eq01} shows that
$$
I_1
\le N \sum_{i \le 1, j_1+j_2\le k} \| D^i_1D^{j_1}_t D^{j_2}_{x'}U\|_{L_2(Q_2)}
\le N \| D \vu \|_{L_2(Q_4)},
$$
where the last inequality is due to Lemma \ref{lemma01}.

{\em Estimate of $I_2$:} Again
using the Sobolev embedding theorem,
we find a positive integer $k$ such that
$U(t,y_1,x')$, as a function of $(t,x') \in Q_1'$,
satisfies
\begin{equation}                            \label{eq02}
\sup_{\substack{(t,x'), (s,y') \in Q_1'\\(t,x')\ne (s,y')}}
\frac{|U(t,y_1,x')-U(s,y_1,y')|}{|t-s|^{1/4}+|x'-y'|^{1/2}}
\le N \| U(\cdot, y_1, \cdot) \|_{W_2^k(Q_1')}.
\end{equation}
For each $i, j$ such that $i+j \le k$,
$D^i_tD^j_{x'}U(t,y_1,x')$, as a function of $y_1 \in (-1,1)$,
satisfies
$$
\sup_{y_1\in(-1,1)}|D^i_tD^j_{x'}U(t,y_1,x')|
$$
$$
\le N\|D^i_tD^j_{x'}U(t,\cdot,x')\|_{L_2(-1,1)}+N \|D^i_tD^j_{x'}D_1U(t,\cdot,x')\|_{L_2(-1,1)}.
$$
This together with \eqref{eq02} gives
$$
I_2
\le N\sum_{i \le 1, j_1+j_2\le k} \| D^i_1D^{j_1}_t D^{j_2}_{x'}U\|_{L_2(Q_2)}
\le N \|D\vu\|_{L_2(Q_4)},
$$
where the last inequality follows from Lemma \ref{lemma01}.
Hence the inequality \eqref{eq1001} is proved.
The proof  of  \eqref{eq1002} is done by repeating the same reasoning as above
with the help of Lemma \ref{lem3}.
\end{proof}

By using a scaling argument, we have the following corollary.

\begin{corollary}
                \label{cor4.2}
Let $r\in (0,\infty)$, $\kappa\in [4,\infty)$,
and $A^{\alpha\beta}=A^{\alpha\beta}(x_1)$.
Assume $\vu\in C_{\text{loc}}^\infty$ satisfies \eqref{eq2.16} in $Q_{\kappa r}$.
Then we have
\begin{align}
            \label{eq5.11}
\left(|U-(U)_{Q_r}|\right)_{Q_r}&\le N\kappa^{-1/2}(|D\vu|^2)_{Q_{\kappa r}}^{1/2},\\
                \label{eq5.36}
\left(|D_{x'}\vu-(D_{x'}\vu)_{Q_r}|\right)_{Q_r}&\le N \kappa ^{-1/2}(|D\vu|^2)_{Q_{\kappa r}}^{1/2},
\end{align}
where $N=N(d,m,\delta)>0$.
\end{corollary}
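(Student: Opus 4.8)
The plan is to reduce Corollary~\ref{cor4.2} to Lemma~\ref{lemma100} by a parabolic rescaling. First I would introduce the scaled function. Given $\vu$ satisfying \eqref{eq2.16} in $Q_{\kappa r}$, set $\rho = \kappa r/4$ and define $\vv(t,x) = \vu(\rho^2 t, \rho x)$, so that $\vv$ is defined (and smooth) on $Q_4$. Writing $\tilde A^{\alpha\beta}(x_1) = A^{\alpha\beta}(\rho x_1)$, which is still a function of $x_1$ alone and satisfies the same ellipticity bounds \eqref{ellipticity} with the same $\delta$, a direct computation shows that $\vv$ satisfies $-\vv_t + D_\alpha(\tilde A^{\alpha\beta} D_\beta \vv) = 0$ in $Q_4$; the factors of $\rho$ from the two spatial derivatives in the divergence term and from the time derivative match because of the parabolic scaling $t \mapsto \rho^2 t$. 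Hence Lemma~\ref{lemma100} applies to $\vv$ with the coefficients $\tilde A$, giving
\begin{equation*}
[\tilde U]_{C^{1/2}(Q_1)} \le N \|D\vv\|_{L_2(Q_4)}, \qquad
[D_{x'}\vv]_{C^{1/2}(Q_1)} \le N \|D\vv\|_{L_2(Q_4)},
\end{equation*}
where $\tilde U = \sum_\beta \tilde A^{1\beta} D_\beta \vv$ and $N = N(d,m,\delta)$ is independent of $\rho$ (hence of $\kappa$ and $r$).

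Next I would convert the $C^{1/2}$ seminorm bound into the oscillation estimate on a small subcylinder. Observe $\tilde U(t,x) = \rho\, U(\rho^2 t,\rho x)$, where $U = \sum_\beta A^{1\beta} D_\beta \vu$, and similarly $D_{x'}\vv(t,x) = \rho\, (D_{x'}\vu)(\rho^2 t, \rho x)$. Since $Q_{\kappa r} = Q_{4\rho}$ corresponds to $Q_4$ under the scaling and $Q_r = Q_{4r/\kappa \cdot \rho}$ corresponds to $Q_{4r/\kappa}$ which, because $\kappa \ge 4$, is contained in $Q_1$, I can bound the mean oscillation of $U$ over $Q_r$ by the $C^{1/2}$ seminorm of $\tilde U$ on $Q_1$ times the $C^{1/2}$-``diameter'' of $Q_{4r/\kappa}$. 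Concretely, for any $w$ and any cylinder $Q_\sigma$,
\begin{equation*}
\bigl(|w - (w)_{Q_\sigma}|\bigr)_{Q_\sigma} \le \sup_{(t,x),(s,y)\in Q_\sigma} |w(t,x)-w(s,y)| \le N \sigma^{1/2}\,[w]_{C^{1/2}(Q_\sigma)}.
\end{equation*}
Applying this to $w = U$ on $Q_r$ and rescaling back, one gets $(|U - (U)_{Q_r}|)_{Q_r} = \rho^{-1}(|\tilde U - (\tilde U)_{Q_{4r/\kappa}}|)_{Q_{4r/\kappa}} \le N\rho^{-1}(4r/\kappa)^{1/2}[\tilde U]_{C^{1/2}(Q_1)} \le N\rho^{-1}(r/\kappa)^{1/2}\|D\vv\|_{L_2(Q_4)}$. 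Finally, $\|D\vv\|_{L_2(Q_4)} = \rho\cdot\rho^{-(d+2)/2}\rho\,\|D\vu\|_{L_2(Q_{4\rho})}$—wait, more carefully: $D\vv(t,x) = \rho (D\vu)(\rho^2 t,\rho x)$, so $\|D\vv\|_{L_2(Q_4)}^2 = \rho^2 \rho^{-(d+2)}\|D\vu\|_{L_2(Q_{4\rho})}^2$, i.e.\ $\|D\vv\|_{L_2(Q_4)} = \rho^{-d/2}\|D\vu\|_{L_2(Q_{\kappa r})}$. Also $(|D\vu|^2)_{Q_{\kappa r}}^{1/2} = |Q_{\kappa r}|^{-1/2}\|D\vu\|_{L_2(Q_{\kappa r})}$ with $|Q_{\kappa r}| = N (\kappa r)^{d+2} = N(4\rho)^{d+2}$. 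Substituting, the powers of $\rho$ cancel and one is left precisely with $(|U-(U)_{Q_r}|)_{Q_r} \le N\kappa^{-1/2}(|D\vu|^2)_{Q_{\kappa r}}^{1/2}$, which is \eqref{eq5.11}. The estimate \eqref{eq5.36} for $D_{x'}\vu$ follows verbatim using the second bound in Lemma~\ref{lemma100}.

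The only point requiring care—and the one I expect to be the main bookkeeping obstacle—is tracking all the scaling factors: the factor $\rho$ relating $\tilde U$ to $U$ (coming from the single spatial derivative hidden in $U$), the factor $\rho^{-d/2}$ from the $L_2$ norm of $D\vv$, the factor $(r/\kappa)^{1/2}$ from the Hölder modulus on the small cylinder, and the normalization $|Q_{\kappa r}|^{1/2}$ in the averaged right-hand side. A useful sanity check is dimensional/homogeneity analysis: both sides of \eqref{eq5.11} scale the same way under $\vu \mapsto c\,\vu(\rho^2\cdot,\rho\cdot)$, so the only thing that can survive is the dimensionless ratio $r/(\kappa r) = \kappa^{-1}$ raised to the power dictated by the Hölder exponent $1/2$, giving exactly $\kappa^{-1/2}$; this confirms there is no residual $r$-dependence and $N$ depends only on $d,m,\delta$. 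I would present this by first stating the scaling setup, then quoting the two inequalities from Lemma~\ref{lemma100} applied to $\vv$, then doing the one-line conversion from $C^{1/2}$ seminorm to averaged oscillation, and finally collecting the scaling factors.
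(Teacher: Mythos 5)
Your proof is correct and follows essentially the same route as the paper: rescale from $Q_{\kappa r}$ to $Q_4$, apply Lemma~\ref{lemma100} to get the $C^{1/2}$ seminorm bound, and then convert back to a mean oscillation estimate while tracking the scaling factors (the paper does this in two scaling steps, first to $r=1$ and then by $\kappa/4$, but your one-step version is equivalent). One small typo worth flagging: under $x\mapsto x/\rho$, $t\mapsto t/\rho^2$ with $\rho=\kappa r/4$, the cylinder $Q_r$ rescales to $Q_{4/\kappa}$ (not $Q_{4r/\kappa}$), so the Hölder factor should read $(4/\kappa)^{1/2}$; with that correction the arithmetic reproduces exactly the bound $N\kappa^{-1/2}(|D\vu|^2)_{Q_{\kappa r}}^{1/2}$ that your homogeneity check already anticipates.
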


\begin{proof}
We prove only \eqref{eq5.11}. The inequality \eqref{eq5.36} is proved similarly.
By a scaling argument, i.e., by considering $u(r^2t,rx)$ and $A^{\alpha\beta}(rx_1)$, it suffices to prove
\eqref{eq5.11} when $r=1$.
In this case, to use again the same type of scaling argument
we define
$$
\hat{\vu}(t,x) = \vu((\kappa/4)^2t, (\kappa/4)x),\quad
\hat{A}^{\alpha\beta}(x_1) = A^{\alpha\beta}((\kappa/4)x_1).
$$
Since $\cP \vu = 0$ in $Q_{\kappa}$, we have
$$
-\hat{\vu}_t+D_{\alpha}( \hat{A}^{\alpha\beta} D_{\beta} \hat{\vu}) = 0
$$
in $Q_4$.
Then by Lemma \ref{lemma100} applied to $\hat{U} = \sum_{\beta=1}^d\hat{A}^{1\beta}D_{\beta}\hat{\vu}$,
we have
$$
[\hat{U}]_{C^{1/2}(Q_1)}
\le N \|D \hat{\vu} \|_{L_2(Q_4)}.
$$
Note that
$$
\left(|U-(U)_{Q_1}|\right)_{Q_1}
\le [U]_{C^{1/2}(Q_1)}
\le [U]_{C^{1/2}(Q_{\kappa/4})}
$$
$$
= \kappa^{-3/2} [\hat{U}]_{C^{1/2}(Q_1)}
\le N \kappa^{-3/2} \|D \hat{\vu} \|_{L_2(Q_4)}
= N \kappa^{-1/2} ( |D \vu|^2 )^{1/2}_{Q_{\kappa}}.
$$
The corollary is proved.
\end{proof}

The following is the main result of this section.

\begin{proposition}
                    \label{thm4.4}
Let $\kappa\in [8,\infty)$, $r\in (0,\infty)$,
$A^{\alpha\beta}=A^{\alpha\beta}(x_1)$,
and $\vg\in L_{2,\text{loc}}$.
Assume that $\vu\in \cH^1_{2,\text{loc}}$ satisfies
\begin{equation*}
\cP \vu=\Div \vg
\end{equation*}
in $Q_{\kappa r}$. Then we have
$$
\left(|U-(U)_{Q_r}|\right)_{Q_{r}}
+\left(|D_{x'}\vu-(D_{x'}\vu)_{Q_r}|\right)_{Q_r}
$$
\begin{equation}
                \label{eq10.26}
\le N \kappa^{-1/2}(|D\vu|^2)_{Q_{\kappa r}}^{1/2}
+N \kappa^{(d+2)/2}(|\vg|^2)_{Q_{\kappa r}}^{1/2},
\end{equation}
where $N$ depends only on $d$, $m$, and $\delta$.
\end{proposition}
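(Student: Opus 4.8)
\emph{Proof idea.} The plan is to use the standard decomposition $\vu=\vv+\vw$, in which $\vw$ absorbs the free term $\vg$ while $\vv$ solves the homogeneous equation, to estimate the oscillation of the $\vv$-part via Corollary~\ref{cor4.2}, and to control the $\vw$-part crudely by its $L_2$-norm using the energy estimate in Lemma~\ref{lem0}. Two reductions come first. By the parabolic scaling replacing $\vu(t,x)$, $A^{\alpha\beta}(x_1)$, $\vg(t,x)$ by $\vu(r^2t,rx)$, $A^{\alpha\beta}(rx_1)$, $r\vg(r^2t,rx)$ --- which preserves \eqref{ellipticity} and takes $\cP\vu=\Div\vg$ in $Q_{\kappa r}$ to an equation of the same form in $Q_\kappa$, while every term of \eqref{eq10.26} is multiplied by $r$ --- it suffices to treat $r=1$. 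Next, we may assume the leading coefficients are smooth: a mollification $A^{\alpha\beta}_{(\epsilon)}$ of $A^{\alpha\beta}$ in $x_1$ still satisfies \eqref{ellipticity} with the same $\delta$, turns $\cP\vu=\Div\vg$ into $\cP_{(\epsilon)}\vu=\Div\vg+D_\alpha\big((A^{\alpha\beta}_{(\epsilon)}-A^{\alpha\beta})D_\beta\vu\big)$, where $(A^{\alpha\beta}_{(\epsilon)}-A^{\alpha\beta})D_\beta\vu\to 0$ in $L_2(Q_\kappa)$ as $\epsilon\to 0$ by dominated convergence (recall $D\vu\in L_2(Q_\kappa)$), and for smooth coefficients every $\cH^1_2$-solution of the homogeneous equation is smooth in the interior, so that Lemmas~\ref{lem3}, \ref{lemma01}, \ref{lemma100} and Corollary~\ref{cor4.2} apply; one then lets $\epsilon\to 0$ in the resulting estimate. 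From now on $r=1$, the coefficients are smooth, and $\cP\vu=\Div\vg$ in $Q_\kappa$ with $\kappa\ge8$, so $\kappa/2\ge4$.

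Let $\vw$ be an $\cH^1_2$-solution of $\cP\vw=\Div(\vg\,\chi_{Q_{\kappa/2}})$ --- say, the solution of the zero-initial-value problem on a time interval containing $\overline{Q_{\kappa/2}}$ --- which by Lemma~\ref{lem0} obeys $\|D\vw\|_{L_2}\le N\|\vg\|_{L_2(Q_{\kappa/2})}$ with $N=N(d,m,\delta)$. Set $\vv:=\vu-\vw$; since $\chi_{Q_{\kappa/2}}\equiv1$ on $Q_{\kappa/2}$, we have $\cP\vv=0$ in $Q_{\kappa/2}$ and $\vv\in C^\infty_{\text{loc}}(Q_{\kappa/2})$. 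Write $U=U_\vv+U_\vw$ with $U_\vv=\sum_\beta A^{1\beta}D_\beta\vv$, $U_\vw=\sum_\beta A^{1\beta}D_\beta\vw$, and $D_{x'}\vu=D_{x'}\vv+D_{x'}\vw$. Corollary~\ref{cor4.2} applied to $\vv$ on $Q_{\kappa/2}$ (its radius being $1$ and its parameter $\kappa/2\ge4$) gives
\[
\left(|U_\vv-(U_\vv)_{Q_1}|\right)_{Q_1}+\left(|D_{x'}\vv-(D_{x'}\vv)_{Q_1}|\right)_{Q_1}\le N\kappa^{-1/2}(|D\vv|^2)_{Q_{\kappa/2}}^{1/2},
\]
while, using $|A^{\alpha\beta}|\le\delta^{-1}$, Jensen's inequality, the energy bound for $\vw$, and $|Q_\rho|=\rho^{d+2}|Q_1|$,
\[
\left(|U_\vw-(U_\vw)_{Q_1}|\right)_{Q_1}+\left(|D_{x'}\vw-(D_{x'}\vw)_{Q_1}|\right)_{Q_1}\le N(|D\vw|^2)_{Q_1}^{1/2}\le N\kappa^{(d+2)/2}(|\vg|^2)_{Q_\kappa}^{1/2}.
\]
Finally, from $(|D\vv|^2)_{Q_{\kappa/2}}^{1/2}\le(|D\vu|^2)_{Q_{\kappa/2}}^{1/2}+(|D\vw|^2)_{Q_{\kappa/2}}^{1/2}$, the energy bound once more, enlargement of the domain of integration, and $\kappa^{-1/2}\le\kappa^{(d+2)/2}$,
\[
\kappa^{-1/2}(|D\vv|^2)_{Q_{\kappa/2}}^{1/2}\le N\kappa^{-1/2}(|D\vu|^2)_{Q_\kappa}^{1/2}+N\kappa^{(d+2)/2}(|\vg|^2)_{Q_\kappa}^{1/2}.
\]
Adding the three displayed inequalities gives \eqref{eq10.26} for $r=1$; undoing the scaling finishes the proof.

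There is no serious obstacle here: the only real analytic ingredient, the $\kappa^{-1/2}$ gain for the homogeneous equation, is already provided by Corollary~\ref{cor4.2}. The two points that require care are the reduction to smooth coefficients (so that Corollary~\ref{cor4.2}, stated for $C^\infty_{\text{loc}}$ solutions, is applicable to $\vv$) and the fact that the constant in the energy estimate for $\vw$ depends only on $d$, $m$, $\delta$ --- which is exactly what makes the powers of $\kappa$ in \eqref{eq10.26} come out as claimed.
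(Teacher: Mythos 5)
Your proof is correct and follows essentially the same route as the paper's: reduce to smooth coefficients by mollification, split $\vu=\vv+\vw$ with $\vw$ solving $\cP\vw=\Div(\text{cut-off of }\vg)$ via Lemma~\ref{lem0}~(iii), apply Corollary~\ref{cor4.2} to the homogeneous part $\vv$, and combine with the triangle inequality. The small variations (explicit scaling to $r=1$, using $\chi_{Q_{\kappa/2}}$ instead of a smooth cutoff, and mollifying only $A^{\alpha\beta}$ rather than all data) are purely cosmetic.
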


\begin{proof}
By performing the standard mollifications, we may assume $\vu$, $\vg$ and $A^{\alpha\beta}$ are smooth. Take $\zeta \in C_0^{\infty}$ such that
$$
\zeta = 1 \quad \text{on} \quad Q_{\kappa r/2},
\quad
\zeta = 0 \quad \text{outside}
\quad
(-(\kappa r)^2, (\kappa r)^2) \times B_{\kappa r}.
$$
By Lemma \ref{lem0} (iii), there exists a unique  $\vw\in \mathring\cH_2^1((-(\kappa r)^2,0)\times\bR^{d})$ satisfies
$$
\cP \vw= \Div ( \zeta \vg ).
$$
Moreover,
\begin{equation}
                            \label{eq11.17}
\|D\vw\|_{L_2(Q_{\kappa r})}\le \|\zeta \vg\|_{L_2((-(\kappa r)^2,0)\times \bR^d)}\le \|\vg\|_{L_2(Q_{\kappa r})}.
\end{equation}
Let $\vv=\vu-\vw$ so that
$\cP \vv = 0$ in $Q_{\kappa r/2}$.
Note that by the classical result $\vw$ is in fact infinitely differentiable in $Q_{\kappa r}$ because
the coefficients of the operator as well as $\zeta \vg$ are smooth.
Hence $\vv$ is also infinitely differentiable in $Q_{\kappa r}$.

Denote $V=A^{1\beta}D_{\beta} \vv$. By Corollary \ref{cor4.2} we have
\begin{equation}
                \label{eq10.26b}
\left(|V-(V)_{Q_r}|\right)_{Q_{r}}
+\left(|D_{x'}\vv-(D_{x'}\vv)_{Q_r}|\right)_{Q_r}\le N \kappa^{-1/2}(|D\vv|^2)_{Q_{\kappa r}}^{1/2}.
\end{equation}
Now we observe that
$$
\left(|U-(U)_{Q_r}|\right)_{Q_{r}}
+\left(|D_{x'}\vu-(D_{x'}\vu)_{Q_r}|\right)_{Q_r}
$$
$$
\le 2\left(|U-(V)_{Q_r}|\right)_{Q_{r}}
+2\left(|D_{x'}\vu-(D_{x'}\vv)_{Q_r}|\right)_{Q_r}.
$$
Therefore, upon using \eqref{eq11.17}, \eqref{eq10.26b} and the triangle's inequality, we bound the left-hand side of \eqref{eq10.26} by
\begin{align*}
&2\left(|V-(V)_{Q_r}|\right)_{Q_r}
+2\left(|D_{x'}\vv-(D_{x'}\vv)_{Q_r}|\right)_{Q_r}+N(|D\vw|)_{Q_{r}}\\
&\,\,\le N \kappa^{-1/2}(|D\vv|^2)_{Q_{\kappa r}}^{1/2}+
N \kappa^{(d+2)/2}(|\vg|^2)_{Q_{\kappa r}}^{1/2}\\
&\,\,\le N \kappa^{-1/2}(|D\vu|^2)_{Q_{\kappa r}}^{1/2}+
N \kappa^{(d+2)/2}(|\vg|^2)_{Q_{\kappa r}}^{1/2}.
\end{align*}
The proposition is proved.
\end{proof}

We will also make use of a generalization of the Fefferman-Stein Theorem proved recently in \cite{Kr08}. Let $\bC_n=\{C_n(i_0,i_1,\cdots,i_d),i_0,\cdots i_d\in \bZ\},n\in \bZ$ be the filtration of partitions given by parabolic dyadic cubes, where
\begin{multline*}
C_n(i_0,i_1,\cdots, i_d)\\
= [i_0 2^{-2n},(i_0+1) 2^{-2n})\times [i_1 2^{-n}, (i_1 + 1)2^{-n})\times \cdots \times [i_d 2^{-n}, (i_d + 1)2^{-n}).
\end{multline*}
\begin{theorem}[Theorem 2.7 \cite{Kr08}]
                                    \label{generalStein}
Let $p\in (0,1)$, $F,G,H\in L_1$. Assume $G\ge |F|$, $H\ge 0$ and for any $n\in \bZ$ and $C\in \bC_n$ there exists a measurable function $F^C$ defined on $C$ such that $|F|\le F^C\le G$ on $C$ and
$$
\int_C|F^C-(F^C)_C|\,dx\,dt
\le \int_C H\,dx\,dt.
$$ Then we have
\begin{equation*}
\|F\|_{L_p}^p\leq N\|H\|_{L_p}\|G\|_{L_p}^{p-1},
\end{equation*}
provided that $H,G\in L_p$.
\end{theorem}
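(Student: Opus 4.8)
The plan is to reduce the claimed estimate to a weak‑type distribution bound for $F$, obtained from a Calder\'on--Zygmund decomposition adapted to the dyadic filtration $\bC_n$, and then to pass to $L_p$; the second step is delicate precisely because $p<1$. Throughout, let $\cM$ denote the dyadic maximal function relative to $\bC_n$, i.e.\ $\cM w(t,x)=\sup_{n}(w)_{C_n(t,x)}$, where $C_n(t,x)\in\bC_n$ is the cube containing $(t,x)$. I shall use two elementary facts: $w\le\cM w$ a.e.; and for every $\lambda>0$ the set $\{\cM w>\lambda\}$ is the disjoint union of the maximal dyadic cubes $C$ with $(w)_C>\lambda$, each of which satisfies $(w)_C\le 2^{d+2}\lambda$ since its parent $\hat C$ has $(w)_{\hat C}\le\lambda$ and $|\hat C|=2^{d+2}|C|$.

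First I would prove the basic distribution estimate. Fix $\lambda>0$ and let $\{C_k\}$ be the maximal dyadic cubes with $(G)_{C_k}>\lambda$ (permissible since $G\in L_1$); then $\sum_k|C_k|=|\{\cM G>\lambda\}|<\infty$, and since $|F|\le G\le\cM G$ a.e.\ we have $\{|F|>\lambda\}\subset\bigcup_kC_k$ up to a null set. On each $C_k$ choose $F^{C_k}$ from the hypothesis; then $(F^{C_k})_{C_k}\le(G)_{C_k}\le 2^{d+2}\lambda$ and $|F|\le(F^{C_k})_{C_k}+|F^{C_k}-(F^{C_k})_{C_k}|$ on $C_k$. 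Setting $\kappa=2^{d+3}$, so that $\kappa\lambda-(F^{C_k})_{C_k}\ge 2^{d+2}\lambda$, Chebyshev's inequality on $C_k$ gives $|\{|F|>\kappa\lambda\}\cap C_k|\le (2/(\kappa\lambda))\int_{C_k}|F^{C_k}-(F^{C_k})_{C_k}|\le (2/(\kappa\lambda))\int_{C_k}H$, and summing over $k$,
\[
\lambda\,|\{|F|>\kappa\lambda\}|\le N\int_{\{\cM G>\lambda\}}H\,dx\,dt\qquad(\lambda>0).
\]
An equivalent cube‑wise form, which is what I would actually iterate, is: on a dyadic cube $C$ maximal at a level $\lambda\asymp(G)_C$ one has $|F|\le N(G)_C+|F^C-(F^C)_C|$ pointwise, whence, by $(a+b)^p\le a^p+b^p$ for $0<p\le1$ and the elementary Hölder bound $\int_C|g|^p\le|C|^{1-p}\big(\int_C|g|\big)^p$,
\[
\int_C|F|^p\le N(G)_C^p\,|C|+|C|\,(H)_C^p .
\]

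The passage to $L_p$ is where the real work lies. One cannot simply integrate $\lambda^{p-1}$ against the basic estimate: $\cM|F|$ need not belong to $L_p$ when $p<1$, and the resulting $\lambda$‑integral diverges at $0$. This is exactly why the $L_p$‑majorant $G$ (rather than $F$ alone) appears in the hypothesis — it supplies the a priori $L_p$ control that is missing at large scales. The scheme I would follow is to run the Calder\'on--Zygmund decomposition of $G$ from an initial level $\sigma$: on the complement of the selected cubes, $|F|\le G\le\sigma$, so $\int_{\{|F|\le\sigma\}}|F|^p\le\int_{\{G\le\sigma\}}G^p\to0$ as $\sigma\to0$; inside each selected cube one iterates the decomposition at successively higher levels, applying the cube‑wise inequality above at each stage so that the ``constant'' contributions $N(G)_C^p|C|$ are resolved further and further and assemble into a convergent geometric series, the remainder being governed by the oscillations, hence by $H$ through $\int_C|F^C-(F^C)_C|\le\int_CH$. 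Optimizing over $\sigma$ and using the homogeneity of both sides under $(F,G,H)\mapsto(cF,cG,cH)$ (so one may normalize $\|G\|_{L_p}=1$) should yield $\|F\|_{L_p}^p\le N\|H\|_{L_p}\|G\|_{L_p}^{p-1}$. I expect the main obstacle to be the bookkeeping in this iteration: choosing the stopping levels so that every sum converges for $p<1$, and extracting the $L_p$‑norm of $H$ — not merely $\|H\|_{L_1}$ — from the cube‑by‑cube contributions. This is the feature genuinely special to the range $p<1$, and it is the reason the auxiliary functions $G$ and $H$ are built into the statement.
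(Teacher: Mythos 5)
The paper itself does not prove this statement --- it simply invokes it as Theorem~2.7 of \cite{Kr08} --- so there is no internal argument to compare against, and I evaluate your attempt on its own terms. The first half of your proposal is sound: the stopping-time decomposition of $G$ and the resulting distribution bound $\lambda\,|\{|F|>\kappa\lambda\}|\le N\int_{\{\cM G>\lambda\}}H$ are derived correctly, and the pointwise observation $|F|\le (G)_C+|F^C-(F^C)_C|$ on a stopping cube, together with $(a+b)^p\le a^p+b^p$ and H\"older, does yield the cube-wise bound $\int_C|F|^p\le N(G)_C^p|C|+|C|(H)_C^p$.

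The genuine gap is exactly the step you flag as ``where the real work lies'': the passage from these to the $L_p$ conclusion is not carried out, and the sketch as given does not close it. Two concrete reasons. First, the good-$\lambda$ route produces $|\{\cM G>\lambda\}|$ and $|\{\cM H>\beta\lambda\}|$, and since the dyadic maximal function is \emph{not} bounded on $L_p$ for $p<1$, integrating $\lambda^{p-1}$ against these does not return $\|G\|_p^p$ or $\|H\|_p^p$; you acknowledge this but do not replace it with a working substitute. Second, your cube-wise inequality is not the right building block for the assembly you describe: for $p\in(0,1)$ the map $t\mapsto t^p$ is concave, so Jensen gives $(H)_C^p\ge (H^p)_C$, whence $\sum_C|C|(H)_C^p\ge\sum_C\int_CH^p$, the \emph{wrong} direction; and the alternative H\"older bound $\sum_C|C|^{1-p}\bigl(\int_C H\bigr)^p\le\bigl(\sum_C|C|\bigr)^{1-p}\bigl(\int H\bigr)^p$ produces $\|H\|_1$ and $\|G\|_1$, not the required $\|H\|_p$ and $\|G\|_p^{p-1}$. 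Likewise, the ``constant'' contributions $\sum_n\lambda_n^p|\{\cM G>\lambda_n\}|\lesssim \sigma^{p-1}\|G\|_1$ blow up as $\sigma\to0$, so the optimization over $\sigma$ you invoke does not obviously yield the stated homogeneous bound. In short, you have identified the right tools and even pinpointed the obstacle, but the iteration that should convert the good-$\lambda$/cube-wise estimates into $\|F\|_{L_p}^p\le N\|H\|_{L_p}\|G\|_{L_p}^{p-1}$ is only gestured at; that missing step \emph{is} the theorem, and you should consult Krylov's proof in \cite{Kr08} to see how the $L_p$-norm of $H$ is actually extracted.
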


\mysection{Proof of Theorem \ref{mainthm1}}
                    \label{sec5}

In this section we complete the proof of Theorem \ref{mainthm1}.
\begin{lemma}
                                    \label{lem5.1}
Let $\kappa\ge 8$, $r > 0$, $\hat A\in \mathcal A$, $\psi\in \Psi$, $\vu\in C_{\text{loc}}^\infty$ and $\vg\in L_{2,\text{loc}}$. Assume
\begin{equation}
                                \label{eq5.38}
-\vu_t(t,x)+D_k\left(\tilde{A}^{kl}(y) D_{l}\vu(t,x)\right)=\Div \vg,
\end{equation}
where $y=\psi(x)$, $\phi=\psi^{-1}$,
and
$$
\tilde{A}^{kl}(y)
= \sum_{\alpha,\beta=1}^d D_{y_{\alpha}}\phi_k(y) \hat{A}^{\alpha\beta}(y_1) D_{y_{\beta}}\phi_l(y),
\quad
k,l = 1, \cdots, d.
$$
Then there exist constants $\nu = \nu(d, \delta) \ge 1$ and  $N = N(d,m,\delta)>0$ such that
$$
\left(|JU-(JU)_{Q_r}|\right)_{Q_r}
+\sum_{\beta=2}^d\left(|J\vu_\beta-(J\vu_\beta)_{Q_r}|\right)_{Q_r}
$$
\begin{equation*}
\le N\kappa^{(d+2)/2}\left(|\vg|^2+|\vu|^2\right)_{Q_{\nu\kappa r}}^{1/2}+N\kappa^{-1/2}
\left(|D\vu|^2\right)_{Q_{\nu\kappa r}}^{1/2},
\end{equation*}
where
\begin{equation}
                            \label{eq9.23}
\vu_{\beta}(t,x) = (D_{y_\beta}\vv) (t,\psi(x)),
\quad
\vv(t,y) = \vu(t,\phi(y)),
\end{equation}
\begin{equation}
                            \label{eq12.26}
J(y)=\det(\partial \phi/\partial y),
\quad
U(t,x)=\sum_{\beta=1}^d\hat A^{1\beta}(\psi_1(x))\vu_{\beta}(t,x).
\end{equation}
\end{lemma}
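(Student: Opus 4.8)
The plan is to undo the diffeomorphism. The coefficients $\tilde A^{kl}(\psi(x))$ in \eqref{eq5.38} are built precisely so that the change of variables $y=\psi(x)$ restores the leading coefficients to functions $\hat A^{\alpha\beta}(y_1)$ of $y_1$ alone, which brings Proposition \ref{thm4.4}, and with it Corollary \ref{cor4.2}, into play.

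First I would translate so that $\psi(0)=0$ and pass to $\vv(t,y)=\vu(t,\phi(y))$. Writing \eqref{eq5.38} in weak form, changing variables $x=\phi(y)$, and using the chain rule together with the standard identities relating $D\psi$ and $D\phi$ (in particular $\sum_k D_{y_\gamma}\phi_k\,D_{x_k}\psi_\beta\circ\phi=\delta_{\gamma\beta}$, which turns $\sum_{k,l}\tilde A^{kl}\,D_{x_l}\psi_\alpha D_{x_k}\psi_\beta\circ\phi$ into $\hat A^{\beta\alpha}(y_1)$), one finds that $\vv$ satisfies, in the $\psi$-image of $Q_{\nu\kappa r}$ — which contains $Q_{2\kappa r}$ once $\nu=\nu(d,\delta)$ is taken large enough — the weighted divergence system
$$-J\vv_t+D_{y_\beta}(J\hat A^{\beta\alpha}(y_1)D_{y_\alpha}\vv)=D_{y_\beta}(J\tilde g_\beta),$$
where $J=\det(\partial\phi/\partial y)$ is independent of $t$, has constant sign with $|J|$ bounded and bounded away from $0$, and is Lipschitz (because $\phi\in C^{1,1}$), and $|\tilde\vg|\le N(\delta)|\vg|$. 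In these coordinates the two quantities in the conclusion are $JU=(J\hat U)\circ\psi$ and $J\vu_\beta=(JD_{y_\beta}\vv)\circ\psi$, where $\hat U=\sum_\beta\hat A^{1\beta}(y_1)D_{y_\beta}\vv$ and $J$ on the left is understood as $J\circ\psi$; so it suffices to estimate the oscillations of the weighted flux $J\hat U$ and of $JD_{y_\beta}\vv$, $\beta=2,\dots,d$, over $y$-cylinders, and then transport the estimate to $x$-cylinders. Since $|D\psi|+|D\phi|\le\delta^{-1}$, the maps $\psi$ and $\phi$ distort cylinders and $L_1$-averages in a controlled way, which is the only place the enlargement factor $\nu$ enters.

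In the $y$-picture I would follow Section \ref{sec4}. Split $\vv=\vv_1+\vv_2$, where $\vv_1\in\mathring\cH^1_2$ solves the same weighted system with source $D_{y_\beta}(\zeta J\tilde g_\beta)$ for a cutoff $\zeta$ adapted to $Q_{2\kappa r}$: because $J$ does not depend on $t$, the energy argument behind Lemma \ref{lem0} (and Lemma \ref{lem10.44}) goes through unchanged for the weighted operator and yields $\|D\vv_1\|_{L_2}\le N(\delta)\|\vg\|_{L_2(Q_{\nu\kappa r})}$, hence $(|D\vv_1|)_{Q_r}\le N\kappa^{(d+2)/2}(|\vg|^2)^{1/2}_{Q_{\nu\kappa r}}$ exactly as in the proof of Proposition \ref{thm4.4}. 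The remainder $\vv_2=\vv-\vv_1$ solves the weighted homogeneous system in a smaller $y$-cylinder, and for it one reruns the chain Lemma \ref{lem3} $\to$ Lemma \ref{lemma01} $\to$ Lemma \ref{lemma100} $\to$ Corollary \ref{cor4.2}: after mollifying in $t$ and $y'$ if needed, one differentiates the equation only in the directions $t$ and $y'$ (legitimate since $\hat A$ depends on $y_1$ only and $J$ is $t$-independent, the resulting first-order terms lying in $L_2$ and being absorbed by the Caccioppoli inequality of Lemma \ref{lem10.44}), recovers $\vv_{2,t}$ by the iteration in the proof of Lemma \ref{lem10.44}, and uses Sobolev embedding, to obtain
$$\left(|J\hat U^{(2)}-(J\hat U^{(2)})_{Q_r}|\right)_{Q_r}+\sum_{\beta=2}^d\left(|JD_{y_\beta}\vv_2-(JD_{y_\beta}\vv_2)_{Q_r}|\right)_{Q_r}\le N\kappa^{-1/2}(|D\vv_2|^2)^{1/2}_{Q_{\kappa r}},$$
where $\hat U^{(2)}=\sum_\beta\hat A^{1\beta}(y_1)D_{y_\beta}\vv_2$. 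Adding the two contributions, using $|D\vv_2(t,y)|\le N(\delta)|D\vu(t,\phi(y))|+|D\vv_1(t,y)|$ together with $\|D\vv_1\|_{L_2}\le N(\delta)\|\vg\|_{L_2}$, and invoking Lemma \ref{lem10.44} once more to trade $\|D\vu\|_{L_2}$ on a cylinder for $(\kappa r)^{-1}\|\vu\|_{L_2}$ on a slightly larger one, produces exactly the right-hand side $N\kappa^{(d+2)/2}(|\vg|^2+|\vu|^2)^{1/2}_{Q_{\nu\kappa r}}+N\kappa^{-1/2}(|D\vu|^2)^{1/2}_{Q_{\nu\kappa r}}$; transforming this estimate back to $x$-cylinders finishes the proof.

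The step I expect to be the main obstacle is carrying the oscillation-decay machinery of Section \ref{sec4} over to the weighted homogeneous system: the weight $J$ is only Lipschitz and the leading coefficients are only measurable in $y_1$, so one may differentiate the equation only in directions avoiding both, every auxiliary lower-order term must stay in $L_2$, and the time derivative has to be recovered by iteration as in Lemma \ref{lem10.44} rather than from the spatial derivatives — this is the divergence-form counterpart of the argument in \cite{Dong08b}. The change-of-variables bookkeeping and the comparability of $x$- and $y$-cylinders are routine, but must be tracked with care so that no spurious power of $\kappa$ is created; the extra term $N\kappa^{(d+2)/2}(|\vu|^2)^{1/2}$ and the enlargement constant $\nu$ in the statement are precisely the cost of these two points.
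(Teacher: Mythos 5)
Your plan correctly reduces the problem to $y$-coordinates via $\vv(t,y)=\vu(t,\phi(y))$ and identifies that one ends up with a weighted system
$-(J\vv)_t+D_{y_\alpha}\bigl(J\hat A^{\alpha\beta}(y_1)D_{y_\beta}\vv\bigr)=\Div\tilde\vg$
with $t$-independent Lipschitz weight $J$. But from that point your route diverges from the paper's and, as written, has a genuine gap.

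The paper does \emph{not} re-derive the Section~\ref{sec4} oscillation machinery for the weighted operator. Instead it immediately changes the unknown from $\vv$ to $J\vv$, observing that
$D_{y_\alpha}\bigl(J\hat A^{\alpha\beta}D_{y_\beta}\vv\bigr)
=D_{y_\alpha}\bigl(\hat A^{\alpha\beta}D_{y_\beta}(J\vv)\bigr)
-D_{y_\alpha}\bigl(\hat A^{\alpha\beta}\vv\,D_{y_\beta}J\bigr)$,
so that $J\vv$ solves the \emph{unweighted} equation
$-(J\vv)_t+D_{y_\alpha}\bigl(\hat A^{\alpha\beta}(y_1)D_{y_\beta}(J\vv)\bigr)=\Div\hat\vg$
where $\hat\vg_\alpha=\tilde\vg_\alpha+\hat A^{\alpha\beta}(y_1)\,\vv\,D_{y_\beta}J$. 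Proposition~\ref{thm4.4} then applies verbatim to $JU$ and $D_{y'}(J\vv)$ — no new lemmas, no weighted Caccioppoli, no weighted iteration. The $(|\vu|^2)^{1/2}$ contribution on the right-hand side comes exactly from the $\hat A^{\alpha\beta}\vv\,D_{y_\beta}J$ piece of $\hat\vg$ (using $\|DJ\|_{L_\infty}\le N(\delta)$), not from the iteration bookkeeping you gesture at; your proposal never explains concretely where this term is generated.

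The deeper problem is that re-running the chain Lemma~\ref{lem3} $\to$ Lemma~\ref{lemma01} $\to$ Lemma~\ref{lemma100} for the weighted homogeneous system does not actually work when $d$ is large, which is presumably why the paper avoids it. The H\"older estimate in Lemma~\ref{lemma100} rests on Sobolev embedding into $W_2^k(Q_1')$ with $k$ depending on $d$ (roughly $k>(d+1)/2$), so one must differentiate the equation $k$ times in the $(t,y')$ directions. For the weighted operator every such differentiation hits $J$: after two of them you already need $D^2J$, and after a third you would need $D^3\phi$, which does not exist under the stated hypothesis $\phi\in C^{1,1}$. You explicitly flag ``carrying the oscillation-decay machinery over to the weighted homogeneous system'' as the hardest step, which is the right instinct, but the point is that this step cannot be patched up within the given regularity — and the $J\vv$ substitution is precisely what makes it unnecessary, since it isolates $DJ$ into a source term that needs only $J\in C^{0,1}$.
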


\begin{proof}
Without loss of generality, we assume $\psi(0)=0$.
From the integral formulation of \eqref{eq5.38}, we see that
$\vv$ satisfies
$$
-(J\vv)_t+D_{y_{\alpha}}\left(\hat{A}^{\alpha\beta}(y_1) J D_{y_{\beta}}\vv\right)=\Div \tilde{\vg},
$$
where
$$
\tilde{\vg} = (\tilde\vg_1, \cdots, \tilde\vg_d),
\quad
\tilde\vg_{\alpha} = J \sum_{\beta=1}^d\vg_\beta(t,\phi(y)) (D_\beta\psi_{\alpha})(\phi(y)).
$$
So $J\vv$ satisfies
$$
-(J\vv)_t+D_{y_{\alpha}}\left(
\hat{A}^{\alpha\beta}(y_1)D_{y_{\beta}}(J\vv)\right)=
\Div \hat{\vg},
$$
where
$$
\hat{\vg} = (\hat\vg_1, \cdots, \hat\vg_d),
$$
$$
\hat\vg_{\alpha} = J \sum_{\beta=1}^d \vg_\beta(t,\phi(y)) (D_\beta\psi_{\alpha})(\phi(y))
+ \sum_{\beta=1}^d \hat{A}^{\alpha\beta}(y_1)\vv(t,y) D_{y_{\beta}}J.
$$
Then by Proposition \ref{thm4.4},
\begin{multline}                            \label{eq001}
\left(|V-(V)_{Q_r}|\right)_{Q_{r}}
+\left(|D_{y'}(J\vv)-(D_{y'}(J\vv))_{Q_r}|\right)_{Q_r}
\\
\le N \kappa^{-1/2}(|D(J\vv)|^2)_{Q_{\kappa r}}^{1/2}
+N \kappa^{(d+2)/2}(|\hat{\vg}|^2)_{Q_{\kappa r}}^{1/2},
\end{multline}
where
$$
V(t,y) = \sum_{\beta=1}^d \hat{A}^{1\beta}(y_1) D_{y_{\beta}}(J\vv).
$$

Now we observe that
$$
\left(|JU-(JU)_{Q_r}|\right)_{Q_r}
+\sum_{\beta=2}^d\left(|J\vu_\beta-(J\vu_\beta)_{Q_r}|\right)_{Q_r}
$$
$$
\le
2\left(|JU-(V)_{Q_{\nu r}}|\right)_{Q_r}
+2\sum_{\beta=2}^d\left(|J\vu_\beta-(D_{y_{\beta}}(J\vv))_{Q_{\nu r}}|\right)_{Q_r}
:=2(I_1 + I_2),
$$
where $\nu>1$ is a constant obtained in the following observation.
There exist constants $\nu$ as well as $N$ depending only
on $d$ and $\delta$ such that, for a nonnegative measurable function $f(t,x)$,
\begin{equation}                            \label{eq1010}
\begin{aligned}
\dashint_{Q_\rho}f(t,x) \, dx \, dt
&\le N \dashint_{Q_{\nu\rho}} f(t,\phi(y)) \, dy \, dt,
\\
\dashint_{Q_\rho}f(t,\phi(y)) \, dy \, dt
&\le N \dashint_{Q_{\nu\rho}} f(t,x) \, dx \, dt.
\end{aligned}
\end{equation}
Thus
$$
I_1 = \dashint_{Q_r} | J(\psi(x)) \sum_{\beta=1}^d \hat{A}^{1\beta}(\psi_1(x)) (D_{y_{\beta}}\vv)(t,\psi(x))
- (V)_{Q_{\nu r}} | \, dx \, dt
$$
$$
\le N \dashint_{Q_{\nu r}}
| J(y) \sum_{\beta=1}^d \hat{A}^{1\beta}(y_1) (D_{y_{\beta}}\vv)(t,y)
- (V)_{Q_{\nu r}} | \, dy \, dt
$$
$$
\le N \left( | V - (V)_{Q_{\nu r}}| \right)_{Q_{\nu r}} + N( |\vv|)_{Q_{\nu r}}.
$$
Similarly,
$$
I_2 = \dashint_{Q_r} | J(\psi(x)) (D_{y_{\beta}}\vv)(t,\psi(x)) - (D_{y_{\beta}}(J\vv))_{Q_{\nu r}}| \, dx \, dt
$$
$$
\le N\dashint_{Q_{\nu r}} | J(y)  (D_{y_{\beta}}\vv)(t,y) - (D_{y_{\beta}}(J\vv))_{Q_{\nu r}}| \, dy \, dt
$$
$$
\le N \left( | D_{y'}(J\vv) - (D_{y'}(J\vv))_{Q_{\nu r}} | \right)_{Q_{\nu r}}
+ N \left(|v|\right)_{Q_{\nu r}}.
$$
Using the above two sets of inequalities for $I_1$ and $I_2$ as well as using \eqref{eq001}
with $\nu r$ in place of $r$ and \eqref{eq1010}, we obtain
the desired inequality in the lemma with $\nu^2$ in place of $\nu$.
The proof is completed upon simply replacing $\nu^2$ by another constant $\nu$.
\end{proof}

With the aid of Lemma \ref{lem5.1}, we estimate the mean oscillations of $JU$ and $J\vu_\beta$ for general operators.

\begin{proposition}
                                            \label{thm5.2}
Let $\gamma > 0$ and $\tau,\sigma \in (1,\infty)$ satisfy
$1/\tau+1/\sigma=1$.
Let $\nu=\nu(d,\delta)\ge 1$ be the constant in Lemma \ref{lem5.1},
$B^{\alpha}=\hat B^{\alpha}=C=0$, and $\vg\in L_{2,\text{loc}}$.
Assume that $\vu\in C_0^\infty$
vanishes outside $Q_R$ for some $R\in (0,R_0]$ and satisfies $\cP \vu=\Div \vg$.
Then under Assumption \ref{assump2} ($\gamma$), for each
$r\in (0,\infty)$, $\kappa\ge 8$, and $(t_0,x_0)\in \bR^{d+1}$,
there exist a diffeomorphism $\psi\in\Psi$, coefficients $\hat{A}^{1\beta}, \beta = 1, \cdots, d$ (independent of $\vu$), and a positive constant $N=N(d,m,\delta,\tau)$
such that
$$
\left(|JU-(JU)_{Q_r(t_0,x_0)}|\right)_{Q_r(t_0,x_0)}
+\sum_{\beta= 2}^d\left(|J\vu_\beta-(J\vu_\beta)_{Q_r(t_0,x_0)}|\right)_{Q_r(t_0,x_0)}
$$
$$
\le N\kappa^{(d+2)/2}
\left(|\vg|^2+|\vu|^2\right)_{Q_{\nu\kappa r}(t_0,x_0)}^{1/2}
+N\kappa^{(d+2)/2}\gamma^{1/(2\sigma)}
\left(|D\vu|^{2\tau}\right)_{Q_{\nu\kappa r}(t_0,x_0)}^{1/(2\tau)}
$$
\begin{equation}
                                \label{eq13.5.05}
+N(\kappa^{(d+2)/2}R+\kappa^{-1/2})
\left(|D\vu|^2\right)_{Q_{\nu\kappa r}(t_0,x_0)}^{1/2},
\end{equation}
where $\vu_{\beta}$, $J$, and $U$ are defined as in \eqref{eq9.23} and \eqref{eq12.26}.
\end{proposition}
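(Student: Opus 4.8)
The strategy is to localize the freezing-of-coefficients argument to the cylinder $Q_{\nu\kappa r}(t_0,x_0)$ and reduce to Lemma \ref{lem5.1}. By translating we may assume $(t_0,x_0)=(0,0)$. If $\nu\kappa r\ge R_0$, then $Q_{\nu\kappa r}\supset \operatorname{supp}\vu$ up to a controlled factor, and the desired inequality follows from a crude Poincar\'e-type bound on the left-hand side together with the $\cH^1_2$-estimate of Lemma \ref{lem0} absorbed into the $\kappa^{(d+2)/2}R$ term; so we concentrate on the case $\nu\kappa r < R_0$. Here we apply Assumption \ref{assump2} ($\gamma$) to the parabolic cylinder $Q_{\nu\kappa r}$: there exist $\bar A\in\mathcal A$ and $\psi\in\Psi$ with $\dashint_{Q_{\nu\kappa r}}|A-\bar A(\psi_1(\cdot))|\,dx\,dt\le\gamma$. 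Set $\hat A^{\alpha\beta}=\bar A^{\alpha\beta}$ and define $\tilde A^{kl}$ as in \eqref{eq5.38}. These give the coefficients $\hat A^{1\beta}$ and the diffeomorphism $\psi$ to be used in the statement, and they are indeed independent of $\vu$.

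Next I would split the equation $\cP\vu=\Div\vg$ as
$$
-\vu_t+D_k\bigl(\tilde A^{kl}(y)D_l\vu\bigr)=\Div\vg+D_k\bigl((\tilde A^{kl}(y)-A^{kl})D_l\vu\bigr)=:\Div(\vg+\vh),
$$
where $\vh^{(k)}=\sum_l(\tilde A^{kl}(\psi(\cdot))-A^{kl})D_l\vu$. The point is that, after a change of variables, $\tilde A^{kl}(\psi(x))$ pulled back equals $\hat A^{\alpha\beta}(y_1)$ conjugated by $D\phi$, so $|\tilde A^{kl}(\psi(x))-A^{kl}(t,x)|$ is controlled by $\delta^{-2}|A(t,x)-\bar A(\psi_1(x))|$ pointwise — here one uses \eqref{eq10.54} to bound the Jacobian factors. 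Hence $\vh$ is genuinely small in an averaged sense: $\dashint_{Q_{\nu\kappa r}}|\vh|\le N\dashint_{Q_{\nu\kappa r}}|A-\bar A(\psi_1)|\,|D\vu|$, and by H\"older with exponents $\sigma,\tau$ this is at most
$$
N\Bigl(\dashint_{Q_{\nu\kappa r}}|A-\bar A(\psi_1)|^{\sigma}\Bigr)^{1/\sigma}\Bigl(\dashint_{Q_{\nu\kappa r}}|D\vu|^{2\tau}\Bigr)^{1/(2\tau)}\cdot(\text{vol factor})\le N\gamma^{1/(2\sigma)}\bigl(|D\vu|^{2\tau}\bigr)^{1/(2\tau)}_{Q_{\nu\kappa r}},
$$
using that $|A-\bar A(\psi_1)|\le 2\delta^{-1}$ so $|A-\bar A(\psi_1)|^{\sigma}\le (2\delta^{-1})^{\sigma-1}|A-\bar A(\psi_1)|$, and $\dashint|A-\bar A(\psi_1)|\le\gamma$. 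This is exactly the mechanism producing the $\gamma^{1/(2\sigma)}$ factor in the second term of \eqref{eq13.5.05}.

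Now apply Lemma \ref{lem5.1} to the rewritten equation $-\vu_t+D_k(\tilde A^{kl}D_l\vu)=\Div(\vg+\vh)$ on $Q_{\nu\kappa r}$, with $\vg+\vh$ in place of $\vg$. That lemma gives
$$
\bigl(|JU-(JU)_{Q_r}|\bigr)_{Q_r}+\sum_{\beta=2}^d\bigl(|J\vu_\beta-(J\vu_\beta)_{Q_r}|\bigr)_{Q_r}\le N\kappa^{(d+2)/2}\bigl(|\vg+\vh|^2+|\vu|^2\bigr)^{1/2}_{Q_{\nu\kappa r}}+N\kappa^{-1/2}\bigl(|D\vu|^2\bigr)^{1/2}_{Q_{\nu\kappa r}}.
$$
Splitting $(|\vg+\vh|^2)^{1/2}\le(|\vg|^2)^{1/2}+(|\vh|^2)^{1/2}$, the $\vg$ and $\vu$ contributions land in the first term of \eqref{eq13.5.05}, and the $\vh$ contribution, by the H\"older estimate above (applied with $L^2$-averages, i.e. replacing $|\vh|$ by $|\vh|$ and then estimating $(|\vh|^2)^{1/2}$ the same way since $\sigma,\tau$ can be taken to make this work — more precisely one writes $(|\vh|^2)^{1/2}_{Q}\le 2\delta^{-1}\bigl(\dashint_Q|A-\bar A(\psi_1)|\bigr)^{?}$ — one uses $|\vh|^2\le 2\delta^{-1}|A-\bar A(\psi_1)||D\vu|^2$ pointwise, then H\"older with $\sigma,\tau$), yields $N\kappa^{(d+2)/2}\gamma^{1/(2\sigma)}(|D\vu|^{2\tau})^{1/(2\tau)}_{Q_{\nu\kappa r}}$. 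The $\kappa^{-1/2}(|D\vu|^2)^{1/2}$ term passes through untouched. The only remaining item is to justify discarding the assumption $\nu\kappa r<R_0$: when $\nu\kappa r\ge R_0$, since $\vu$ is supported in $Q_R$ with $R\le R_0\le\nu\kappa r$, one has $\vu\equiv 0$ outside a fixed fraction of $Q_{\nu\kappa r}$ and a direct estimate of the left side against $(|D\vu|^2)^{1/2}_{Q_{\nu\kappa r}}$ is absorbed into the $\kappa^{(d+2)/2}R$ coefficient (using $R/ (\nu\kappa r)\le R/R_0\le R$, up to constants, after noting $R_0\le 1$); this is where the $\kappa^{(d+2)/2}R$ summand in the last line of \eqref{eq13.5.05} is used. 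I expect the main technical obstacle to be the careful bookkeeping of the change-of-variables factors $J$, $D\psi$, $D\phi$ in passing between $\vh$ written in $x$-coordinates and the corresponding perturbation $\hat\vg$ of Lemma \ref{lem5.1} written in $y$-coordinates — in particular making sure the pointwise bound $|\tilde A^{kl}(\psi(x))-A^{kl}(t,x)|\le N|A(t,x)-\bar A(\psi_1(x))|$ holds with $N=N(d,\delta)$, which is the linchpin that converts the BMO-smallness of $A$ into smallness of the right-hand side perturbation.
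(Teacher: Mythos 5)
Your plan has the right overall shape — localize, freeze coefficients via Assumption \ref{assump2} on a cylinder of radius $\le R_0$, rewrite $\cP\vu=\Div\vg$ with frozen coefficients $\tilde A$ and a perturbation $\hat\vg$, and invoke Lemma \ref{lem5.1}. But there is a genuine gap at the heart of the freezing step, and a consequent misattribution of the $\kappa^{(d+2)/2}R$ term.

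\textbf{The choice $\hat A=\bar A$ breaks the perturbation estimate.} With your choice, $\tilde A^{kl}(\psi(x))=\sum_{\alpha,\beta}D_{y_\alpha}\phi_k(\psi(x))\,\bar A^{\alpha\beta}(\psi_1(x))\,D_{y_\beta}\phi_l(\psi(x))$, which is $\bar A$ conjugated by $D\phi$ evaluated at the \emph{varying} point $\psi(x)$. This is not close to $\bar A^{kl}(\psi_1(x))$ (and hence not close to $A^{kl}(t,x)$): your claimed pointwise bound $|\tilde A^{kl}(\psi(x))-A^{kl}(t,x)|\le N|A(t,x)-\bar A(\psi_1(x))|$ is simply false, since the left side contains the $D\phi$ factors which do not appear on the right. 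The paper instead takes
$$
\hat A^{\alpha\beta}(y_1)=\sum_{k,l}D_k\psi_\alpha(x^*)\,\bar A^{kl}(y_1)\,D_l\psi_\beta(x^*),
$$
with $x^*$ the center of the cylinder $Q$; then $\bar A^{kl}(y_1)=\sum_{\alpha,\beta}D_{y_\alpha}\phi_k(y^*)\hat A^{\alpha\beta}(y_1)D_{y_\beta}\phi_l(y^*)$, and the perturbation $\hat\vg_k=\tilde A^{kl}D_l\vu-A^{kl}D_l\vu$ is split as $(\tilde A-\bar A(\psi_1))D\vu+(\bar A(\psi_1)-A)D\vu$. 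The first piece, after noting that it contains the factor $(D_{y_\alpha}\phi_kD_{y_\beta}\phi_l)(\psi(x))-(D_{y_\alpha}\phi_kD_{y_\beta}\phi_l)(y^*)$, is $O(R)$ in $L_\infty(Q_R)$ by the $C^{1,1}$ bound \eqref{eq10.54}; this is precisely what produces the $\kappa^{(d+2)/2}R(|D\vu|^2)^{1/2}$ term. The second piece is handled by H\"older with exponents $\sigma,\tau$ plus Assumption \ref{assump2}, yielding the $\gamma^{1/(2\sigma)}$ term — this part of your argument is essentially correct, but it only applies to the second piece, not to all of $\hat\vg$.

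\textbf{The $\kappa^{(d+2)/2}R$ term does not come from the case $\nu\kappa r\ge R_0$.} In the paper, the case distinction is between $\nu\kappa r<R$ and $\nu\kappa r\ge R$ (not $R_0$), and it serves only to choose the cylinder $Q$ on which Assumption \ref{assump2} is invoked: $Q=Q_{\nu\kappa r}(t_0,x_0)$ in the first case, $Q=Q_R$ in the second. In both cases the subsequent estimate is the same, and the $R$ term comes from freezing $D\psi$ at $x^*$, as described above — not from any crude Poincar\'e bound. Your proposed crude bound in the regime $\nu\kappa r\ge R_0$ would give $\text{LHS}\le N\kappa^{(d+2)/2}(|D\vu|^2)^{1/2}_{Q_{\nu\kappa r}}$, and this cannot be absorbed into $N(\kappa^{(d+2)/2}R+\kappa^{-1/2})(|D\vu|^2)^{1/2}$ unless $R$ is bounded below, which it is not (e.g.\ $R$ can be tiny with $\kappa$ enormous). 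So this branch of your argument does not close.
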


\begin{proof}
We fix a $\kappa\ge 8$ and $r\in
(0,\infty)$. Choose $Q$ to be  $Q_{\nu\kappa
r}(t_0,x_0)$ if $\nu\kappa r< R$ and   $Q_{ R }$ if $\nu\kappa
r\ge  R$.
Let $(t^*,x^*)$ be the center of $Q$ and $y^*=\psi(x^*)$.
By Assumption \ref{assump2} ($\gamma$), we can find $\psi\in \Psi$ and $\bar A=\bar A(r)\in \mathcal A$ satisfying \eqref{eq3.07}. We set
$$
\hat{A}^{\alpha\beta}(y_1)
= \sum_{k,l=1}^d D_k\psi_{\alpha}(x^*) \bar{A}^{kl}(y_1) D_l\psi_{\beta}(x^*),
\quad
\alpha,\beta = 1, \cdots, d,
$$
and
$$
\tilde{A}^{kl}(y)
= \sum_{\alpha,\beta=1}^d D_{y_{\alpha}}\phi_k(y) \hat{A}^{\alpha\beta}(y_1) D_{y_{\beta}}\phi_l(y),
\quad
k,l = 1, \cdots, d,
$$
where $y = \psi(x)$.
The ellipticity constants of $\hat{A}$ and $\tilde{A}$ may not be $\delta$, but they depend only on $\delta$.
Note that
$$
-\vu_t + D_k(\tilde{A}^{kl}D_l \vu)
= \Div \vg + D_k\left(\tilde{A}^{kl}D_l \vu - A^{kl}D_l\vu\right).
$$
Thus by Lemma \ref{lem5.1} with a shift of the coordinates,
$$
\left(|JU-(JU)_{Q_r(t_0,x_0)}|\right)_{Q_r(t_0,x_0)}
+\sum_{\beta= 2}^d\left(|J\vu_{\beta}-(J\vu_{\beta})_{Q_r(t_0,x_0)}|\right)_{Q_r(t_0,x_0)}
$$
\begin{equation}
                                \label{13.5.42}
\le N\kappa^{(d+2)/2}\left(|\vg+\hat\vg|^2+|\vu|^2\right)_{Q_{\nu\kappa r}(t_0,x_0)}^{1/2}
+N\kappa^{-1/2}
\left(|D\vu|^2\right)_{Q_{\nu\kappa r}(t_0,x_0)}^{1/2},
\end{equation}
where $\nu = \nu(d,\delta)\ge 1$, $N=N(d,m,\delta)>0$, and
$$
\int_{Q_{\nu\kappa r}(t_0,x_0)}|\hat{\vg}_k|^2 \, dx \, dt
:= \int_{Q_{\nu\kappa r}(t_0,x_0)}|\tilde{A}^{kl}D_l \vu - A^{kl}D_l\vu|^2 \, dx \, dt
$$
$$
\le 2\int_{Q_{\nu\kappa r}(t_0,x_0)}|\tilde{A}^{kl}D_l \vu - \bar{A}^{kl}D_l \vu|^2 \, dx \, dt
+2\int_{Q_{\nu\kappa r}(t_0,x_0)}|\bar{A}^{kl}D_l \vu - A^{kl}D_l\vu|^2 \, dx \, dt
$$
$$
:= 2(I_1 + I_2).
$$
Note that
$$
\bar{A}^{kl}(y_1) = \sum_{\alpha,\beta=1}^d D_{y_{\alpha}}\phi_k(y^*) \hat{A}^{\alpha\beta}(y_1) D_{y_{\beta}}\phi_l(y^*).
$$
Thus by \eqref{eq10.54}
{\small $$
I_1=
\int_{Q_{\nu\kappa r}(t_0,x_0)\cap Q_R}
\big|\big( (D_{y_{\alpha}}\phi_k D_{y_{\beta}}\phi_l)(\psi(x))
- (D_{y_{\alpha}}\phi_k D_{y_{\beta}}\phi_l)(y^*)\big)\hat A^{\alpha\beta}(\psi_1(x)) D\vu \big|^2 \, dx\,dt
$$
$$
\le N\|(D_{y_{\alpha}}\phi_k D_{y_{\beta}}\phi_l)(\psi(\cdot))
- (D_{y_{\alpha}}\phi_k D_{y_{\beta}}\phi_l)(y^*)\|^2_{L_\infty(Q_R)}
\int_{Q_{\nu\kappa r}(t_0,x_0)}|D\vu|^2 \, dx\,dt
$$
\begin{equation}
                            \label{eq10.38}
\le NR^2\int_{Q_{\nu\kappa r}(t_0,x_0)}|D\vu|^2 \, dx\,dt.
\end{equation}}
On the other hand, by the H\"older's inequality,
we have
\begin{equation}                            \label{13.5.53}
I_2 \le N I_{21}^{1/\sigma} I_{22}^{1/\tau},
\end{equation}
where
\begin{align*}
I_{21} &= \sum_{l}
\int_{Q_{\nu\kappa r}(t_0,x_0) \cap Q_{R}}
| \bar{A}^{kl}(\psi_1(x)) - A^{kl}(t,x) |^{2\sigma} \, dx\,dt,\\
I_{22} &= \int_{Q_{\nu\kappa r}(t_0,x_0)} |D\vu|^{2\tau} \, dx\,dt.
\end{align*}
Due to Assumption \ref{assump2} ($\gamma$),
$$
I_{21} \leq N\gamma|Q|
\le  N  (\nu\kappa r)^{d+2} \gamma.
$$
This together with  \eqref{13.5.42}-\eqref{13.5.53} yields
\eqref{eq13.5.05}. The proposition is proved.
\end{proof}

The next corollary follows immediately from Proposition \ref{thm5.2} by using the triangle inequality.

\begin{corollary}
                                \label{cor5.3}
Let $\gamma > 0$, $\kappa\ge 8$ and $\tau,\sigma \in (1,\infty)$ satisfy $1/\tau+1/\sigma=1$.
Suppose that $B^{\alpha}=\hat B^{\alpha}=C=0$, $\vg \in L_{2,\text{loc}}$,
and $\vu\in C^\infty_0$ vanishes outside $Q_R$ for some $R\in(0,R_0]$
satisfying $\cP \vu = \Div \vg$.
Under assumption \ref{assump2} ($\gamma$),
for each $n \in \bZ$ and $C \in \bC_n$, there exist a diffeomorphism $\psi \in \Psi$,
coefficients $\hat{A}^{1\beta}, \beta=1,\cdots,d$ (independent of $\vu$), and
a constant $N = N(d, m, \delta, \tau)$ such that
\begin{equation}
                    \label{eq3.10.35}
\left(|JU-(JU)_{C}|\right)_{C}
+\sum_{\beta= 2}^d\left(|J\vu_{\beta}-(J\vu_{\beta})_{C}|\right)_{C}
\le N(H)_C,
\end{equation}
where $\vu_{\beta}$, $J$ and $U$ are defined as in \eqref{eq9.23} and \eqref{eq12.26}, and
\begin{equation*}
H=\kappa^{\frac {d+2} 2}(\bM(|\vg|^2+|\vu|^2))^{\frac 1 2}
+\kappa^{\frac {d+2} 2}\gamma^{\frac 1 {2\sigma}}(\bM(|D\vu|^{2\tau}))^{\frac 1 {2\tau}}
+(\kappa^{\frac {d+2} 2}R+\kappa^{-\frac 1 2})(\bM(|D\vu|^2))^{\frac 1 2}.
\end{equation*}
\end{corollary}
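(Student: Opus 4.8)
The plan is to derive \eqref{eq3.10.35} from Proposition \ref{thm5.2} by attaching to each parabolic dyadic cube $C\in\bC_n$ one parabolic cylinder containing $C$ and comparable to it in measure, feeding that cylinder into Proposition \ref{thm5.2}, and then replacing the cylindrical averages on the right-hand side of \eqref{eq13.5.05} by the parabolic maximal functions appearing in $H$. First I would fix the geometry: for $C=C_n(i_0,i_1,\cdots,i_d)$ set $t_0=(i_0+1)2^{-2n}$, take $x_0$ to be the center of the spatial cube $\prod_{j=1}^d[i_j2^{-n},(i_j+1)2^{-n})$, and let $r=(\sqrt d+1)\,2^{-n}$. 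Then $C\subset Q_r(t_0,x_0)$ (in the time variable because $r^2\ge 2^{-2n}$ and $t_0$ is the right endpoint of the time interval of $C$; in space because $r$ exceeds the half-diagonal $\tfrac12\sqrt d\,2^{-n}$ of the spatial cube), and $|Q_r(t_0,x_0)|=|B_1|\,r^{d+2}=|B_1|(\sqrt d+1)^{d+2}|C|$, so $|Q_r(t_0,x_0)|\le N(d)|C|$. Since $\nu\kappa\ge1$, one also has $C\subset Q_r(t_0,x_0)\subset Q_{\nu\kappa r}(t_0,x_0)$.

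I would then apply Proposition \ref{thm5.2} with the given $\kappa$, with $r$ and $(t_0,x_0)$ as above; it supplies a diffeomorphism $\psi\in\Psi$ and coefficients $\hat A^{1\beta}$, depending on $C$ and on the coefficients $A$ but not on $\vu$, for which \eqref{eq13.5.05} holds over $Q_r(t_0,x_0)$ (with $Q_{\nu\kappa r}(t_0,x_0)$ on the right-hand side). To rewrite the left-hand side in terms of averages over $C$, I use that for any (vector-valued) function $h$ and any constant vector $c$ one has $\dashint_C|h-(h)_C|\le 2\dashint_C|h-c|$; taking $c=(h)_{Q_r(t_0,x_0)}$ and using $C\subset Q_r(t_0,x_0)$ gives $\dashint_C|h-(h)_C|\le \frac{2|Q_r(t_0,x_0)|}{|C|}\bigl(|h-(h)_{Q_r(t_0,x_0)}|\bigr)_{Q_r(t_0,x_0)}\le N(d)\bigl(|h-(h)_{Q_r(t_0,x_0)}|\bigr)_{Q_r(t_0,x_0)}$. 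Applying this with $h=JU$ and with $h=J\vu_\beta$, $\beta=2,\cdots,d$, and summing, the left-hand side of \eqref{eq3.10.35} is at most $N(d)$ times the left-hand side of \eqref{eq13.5.05}, hence at most $N(d)$ times its right-hand side.

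It remains to dominate each cylindrical average in the right-hand side of \eqref{eq13.5.05} by an average over $C$ of the corresponding maximal function. Because every point $(s,z)\in C$ lies in $Q_{\nu\kappa r}(t_0,x_0)$, for a nonnegative $h$ and any $q>0$ we have $(h)_{Q_{\nu\kappa r}(t_0,x_0)}^{1/q}\le(\bM h(s,z))^{1/q}$ for all $(s,z)\in C$, hence $(h)_{Q_{\nu\kappa r}(t_0,x_0)}^{1/q}\le\bigl((\bM h)^{1/q}\bigr)_C$. Taking $(q,h)=(2,|\vg|^2+|\vu|^2)$, $(q,h)=(2\tau,|D\vu|^{2\tau})$ and $(q,h)=(2,|D\vu|^2)$, the three terms on the right of \eqref{eq13.5.05} are bounded, respectively, by $\bigl(\kappa^{(d+2)/2}(\bM(|\vg|^2+|\vu|^2))^{1/2}\bigr)_C$, $\bigl(\kappa^{(d+2)/2}\gamma^{1/(2\sigma)}(\bM(|D\vu|^{2\tau}))^{1/(2\tau)}\bigr)_C$ and $\bigl((\kappa^{(d+2)/2}R+\kappa^{-1/2})(\bM(|D\vu|^2))^{1/2}\bigr)_C$, all up to the constant $N(d,m,\delta,\tau)$ from Proposition \ref{thm5.2}; summing and using linearity of the average over $C$, the right-hand side of \eqref{eq13.5.05} is at most $N(d,m,\delta,\tau)(H)_C$. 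Combined with the previous paragraph, this yields \eqref{eq3.10.35}.

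I do not expect a genuine difficulty; this is a bookkeeping reduction of Proposition \ref{thm5.2}. The one point requiring care is that $Q_r(t_0,x_0)$ must be selected once per cube $C$, so that the diffeomorphism $\psi$ and coefficients $\hat A^{1\beta}$ furnished by Proposition \ref{thm5.2} are genuinely independent of $\vu$ on that cube; apart from this, only the elementary inclusions $C\subset Q_r(t_0,x_0)\subset Q_{\nu\kappa r}(t_0,x_0)$ and the measure comparison $|Q_r(t_0,x_0)|\le N(d)|C|$ are needed, with $\kappa$ entering only through the prefactors already built into $H$.
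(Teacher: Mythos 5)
Your proposal is correct and is exactly the bookkeeping argument the paper has in mind when it says the corollary "follows immediately from Proposition \ref{thm5.2} by using the triangle inequality": enclose each dyadic cube $C$ in a comparable parabolic cylinder, invoke Proposition \ref{thm5.2} there to fix $\psi$ and $\hat A^{1\beta}$ once per $C$, pass from oscillation over $Q_r$ to oscillation over $C$ via the constant-subtraction/triangle trick and the measure comparison, and absorb the cylinder averages on the right into maximal functions evaluated on $C$.
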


\begin{proposition}
                    \label{thm5.4}
Let $p\in (2,\infty)$. Assume $B^{\alpha}=\hat B^{\alpha}=C=0$. Then there exist positive constants
$\gamma$, $N$ and $R\in (0,1]$ depending only on $d$, $m$, $p$, and $\delta$ such that under Assumption \ref{assump2} ($\gamma$), for any $\vu\in C_0^\infty$
vanishing outside $Q_{RR_0}$ and $\vg\in L_p$, we have
\begin{equation}
                \label{eq3.11.07}
\|D\vu\|_{L_p}\le N\|\vg\|_{L_p}+N\|\vu\|_{L_p},
\end{equation}
provided that $\cP \vu=\Div \vg$.
\end{proposition}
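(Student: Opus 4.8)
The plan is to insert the mean-oscillation bound of Corollary \ref{cor5.3} into the Fefferman--Stein type inequality of Theorem \ref{generalStein}, and then to absorb the resulting ``bad'' terms by choosing the free parameters $\kappa$, $R$, $\gamma$ appropriately.

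First I would fix $\tau\in(1,\infty)$ with $2\tau<p$ (possible since $p>2$) and let $\sigma$ be the conjugate exponent. The key preliminary observation is a cube-independent two-sided comparison between $D\vu$ and the quantities appearing in Corollary \ref{cor5.3}. For $C\in\bC_n$, let $\psi=\psi_C\in\Psi$, $\hat A^{1\beta}$ and $\vu_\beta,J,U$ (relative to $C$) be as in Corollary \ref{cor5.3}, \eqref{eq9.23} and \eqref{eq12.26}. By the chain rule, $\vu_\beta(t,x)=\sum_\alpha D_\alpha\vu(t,x)\,(D_{y_\beta}\phi_\alpha)(\psi(x))$ with $\phi=\psi^{-1}$; by \eqref{eq10.54} the matrix $\big((D_{y_\beta}\phi_\alpha)(\psi(x))\big)$ and the Jacobian $J$ are bounded from above and below by constants depending only on $d,\delta$, and the ellipticity of $\hat A$ makes the block $\hat A^{11}$ invertible with bounds depending only on $d,m,\delta$. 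Hence there is $N_0=N_0(d,m,\delta)\ge1$ such that $N_0^{-1}|D\vu|\le |JU|+\sum_{\beta=2}^d|J\vu_\beta|\le N_0|D\vu|$ a.e.

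Next I would apply Theorem \ref{generalStein} with $F:=|D\vu|$, $G:=N_0^2|D\vu|$ and, for each $C$, the \emph{$C$-dependent} local approximant $F^C:=N_0\big(|JU|+\sum_{\beta=2}^d|J\vu_\beta|\big)$ computed with $\psi_C$: the displayed comparison gives $|F|\le F^C\le G$ on $C$, while $\dashint_C|\,|v|-(|v|)_C|\le 2\dashint_C|v-(v)_C|$ together with \eqref{eq3.10.35} gives $\dashint_C|F^C-(F^C)_C|\le N\dashint_C H$ with the function $H$ of Corollary \ref{cor5.3}. Since $\vu\in C_0^\infty$ and $\|G\|_{L_p}=N_0^2\|D\vu\|_{L_p}$, the conclusion of Theorem \ref{generalStein} (after a routine approximation to meet its integrability hypotheses) gives $\|D\vu\|_{L_p}\le N\|H\|_{L_p}$. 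As $p/2>1$ and $p/(2\tau)>1$, the Hardy--Littlewood maximal function theorem yields $\big\|(\bM(|w|^q))^{1/q}\big\|_{L_p}\le N\|w\|_{L_p}$ for $q\in\{2,2\tau\}$, and therefore
\begin{equation*}
\|H\|_{L_p}\le N\kappa^{(d+2)/2}\big(\|\vg\|_{L_p}+\|\vu\|_{L_p}\big)
+N\Big(\kappa^{(d+2)/2}\gamma^{1/(2\sigma)}+\kappa^{(d+2)/2}R+\kappa^{-1/2}\Big)\|D\vu\|_{L_p},
\end{equation*}
where $N=N(d,m,p,\delta)$ and the term $\kappa^{(d+2)/2}R$ comes from the support radius of $\vu$.

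Finally I would fix the parameters in the order they are allowed to depend on one another: choose $\kappa=\kappa(d,m,p,\delta)\ge8$ with $N\kappa^{-1/2}\le 1/6$, then $R\in(0,1]$ with $N\kappa^{(d+2)/2}R\le 1/6$, then $\gamma>0$ with $N\kappa^{(d+2)/2}\gamma^{1/(2\sigma)}\le 1/6$; since $\|D\vu\|_{L_p}<\infty$, the last term may be absorbed into the left-hand side and \eqref{eq3.11.07} follows. The hard part, and the reason a classical Fefferman--Stein inequality for a single fixed function will not do, is that the ``good'' linear combination $U$ of the derivatives of $\vu$, as well as the tangential derivatives $\vu_\beta$, genuinely depend on the diffeomorphism attached to each cube $C$; this is precisely what forces one to use the variable-approximant Theorem \ref{generalStein} together with the cube-independent bound $|D\vu|\asymp|JU|+\sum_{\beta\ge2}|J\vu_\beta|$ established above. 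A secondary but essential point is that the power/maximal-function structure of $H$ requires $1<\tau$ and $2\tau<p$ simultaneously, which confines this argument to $p\in(2,\infty)$; the remaining range $p\le2$ is handled separately (by duality together with the $p=2$ energy estimate).
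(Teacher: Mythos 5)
Your proposal is correct and follows essentially the same route as the paper: the cube-dependent approximant $F^C=N_0(|JU|+\sum_{\beta\ge 2}|J\vu_\beta|)$ with the cube-independent two-sided comparison to $|D\vu|$, Theorem \ref{generalStein}, the Hardy--Littlewood maximal theorem with $2\tau<p$, and absorption by choosing $\kappa$ large and then $R,\gamma$ small. The only differences are cosmetic: you fix a generic $\tau\in(1,p/2)$ where the paper picks $\tau=(p+2)/4$, and you spell out the chain-rule and ellipticity-of-$\hat A^{11}$ argument behind the comparison $|D\vu|\asymp|JU|+\sum_{\beta\ge2}|J\vu_\beta|$, which the paper states as ``easily seen.''
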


\begin{proof}
Let $\gamma>0$, $\kappa\ge 8$ and $R\in (0,1]$ be constants to be specified later. Let $\tau=(p+2)/4>1$ such that $p>2\tau$.
We take $n\in \bZ$, $C \in \bC_n$ and let $\psi  \in \Psi$ be the diffeomorphism from Corollary \ref{cor5.3} corresponding to the chosen $n$ and $C$. We also obtain corresponding $\vu_{\beta}$, $J$ and $U$ as in \eqref{eq9.23} and \eqref{eq12.26}.

It is easily seen that
\begin{equation*}
|D\vu|\le N_2(d,\delta)\sum_{\beta=2}^d|J\vu_{\beta}|+N_2(d,\delta)|JU|
\le N_3(d,\delta)|D\vu|.
\end{equation*}
We set
$$
F=|D\vu|,\quad F^C=N_2\sum_{\beta=2}^d|J\vu_{\beta}|+N_2|JU|,
\quad G=N_3|D\vu|.
$$
By the triangle inequality and \eqref{eq3.10.35},
$$
(|F^C-(F^C)_C|)_C\le N(H)_C,
$$
where $H$ is defined in Corollary \ref{cor5.3}.
Now by Theorem \ref{generalStein}, we get
$$
\|D \vu\|_{L_p}^p=\|F\|_{L_p}^p\le N\|H\|_{L_p}\|G\|_{L_p}^{p-1}
\le N(\epsilon)\|H\|_{L_p}^p+\epsilon \|G\|_{L_p}^{p}.
$$
Upon taking a small $\epsilon>0$, it holds that
\begin{equation}
                    \label{eq3.11.00}
\|D\vu\|_{L_p}\le N\|H\|_{L_p}.
\end{equation}
We use the definition of $H$ and the Hardy-Littlewood maximal function theorem (recall $p>2\tau>2$) to deduce from \eqref{eq3.11.00}
\begin{equation}
                    \label{eq3.11.05}
\|D \vu\|_{L_p}\le
N\kappa^{\frac {d+2} 2}\left(\|\vg\|_{L_p}+\|\vu\|_{L_p}\right)
+N(\kappa^{\frac {d+2} 2}\gamma^{\frac 1 {2\sigma}}+\kappa^{\frac {d+2} 2}RR_0+\kappa^{-\frac 1 2})\|D\vu\|_{L_p}.
\end{equation}
By choosing $\kappa$ sufficiently large, then $\gamma$ and $R$ sufficiently small in \eqref{eq3.11.05} such that
$$
N(\kappa^{\frac {d+2} 2}\gamma^{\frac 1 {2\sigma}}+\kappa^{\frac {d+2} 2}RR_0+\kappa^{-\frac 12})\le 1/2,
$$
we come to \eqref{eq3.11.07}. The proposition is proved.
\end{proof}

\begin{proof}[Proof of Theorem \ref{mainthm1}]
Thanks for the duality argument, it suffices to prove the case $p > 2$.
For $T=\infty$, the theorem follows from Proposition \ref{thm5.4} by using a partition of unity and an idea by S. Agmon; see, for instance, the proof of Theorem 1.4 \cite{Kr08}. For general $T\in (-\infty,\infty]$, we use the fact that $u=w$ for $t<T$, where $w\in
\cH_p^{1}$ solves
$$
\cP w-\lambda w=\chi_{t<T}(\cP u-\lambda u).
$$ This finishes the proof of the theorem.
\end{proof}

\mysection{A remark about elliptic systems}      \label{sec6}

For elliptic systems, the condition on diffeomorphisms $\psi$ and $\phi$ can be relaxed. Indeed, we only require $\psi$ and $\phi$ to be in $C^{0,1}$ and $D\psi$ has locally small mean oscillations. More precisely, we impose the following assumption on $\psi$ and $A$, which is weaker than the one in Section \ref{secMain}.

Let $\Psi$ be the set of $C^{0,1}$ diffeomorphisms  $\psi: \bR^{d} \to \bR^{d}$ such that the mappings $\psi$  and $\phi=\psi^{-1}$ satisfy
\begin{equation*}
|D\psi|\le\delta^{-1},\quad
|D\phi|\le\delta^{-1}.
\end{equation*}

\begin{assumption}[$\gamma$]                          \label{assump4}
There exists a positive constant $R_0\in(0,1]$ such that, for
any ball $B$ of radius less than $R_0$, one can find an $\hat A\in \mathcal A$ and a  $\psi = (\psi_1,\cdots,\psi_d)\in \Psi$ such that
\begin{equation}
                            \label{eq4.50}
\sum_{k,l}\int_B |\hat A^{kl}(\psi_1(x))-A^{\alpha\beta}D_\alpha\psi_k D_\beta\psi_l J(x)|\, dx
\le\gamma|B|.
\end{equation}
\end{assumption}

\begin{lemma}
                                    \label{lem6.1}
Let $\kappa\ge 8$, $r > 0$, $\hat A\in \mathcal A$, $\psi\in \Psi$, $u\in C_{\text{loc}}^\infty(\bR^d)$ and $g\in L_{2,\text{loc}}(\bR^d)$. Assume
$$
D_\alpha\left(\hat A^{kl}(y^1)D_{y_k}\phi_{\alpha}(y)D_{y_l}\phi_{\beta}(y)J^{-1}
D_{\beta}\vu(x)\right)=\Div \vg,
$$
where $y=\psi(x)$ and $\phi=\psi^{-1}$. Then there exist constants $\nu = \nu(d, \delta) \ge 1$ and $N = N(d,m,\delta)$ such that
$$
\left(|U-(U)_{B_r}|\right)_{B_r}
+\sum_{\beta=2}^d\left(|\vu_\beta-(\vu_\beta)_{B_r}|\right)_{B_r}
$$
\begin{equation*}
\le N\kappa^{(d+2)/2}\left(|\vg|^2\right)_{B_{\nu\kappa r}}^{1/2}+N\kappa^{-1/2}
\left(|D\vu|^2\right)_{B_{\nu\kappa r}}^{1/2},
\end{equation*}
where
\begin{equation}
                            \label{eq9.23b}
\quad \vu_{\beta}(x) = (D_{y_\beta}\vv) (\psi(x)),
\quad \vv(y) = \vu(\phi(y)),
\end{equation}
\begin{equation}
                            \label{eq12.26b}
J(x)=\det(\partial \psi/\partial x)^{-1},\quad
U(x)=\hat A^{1\beta}(\psi_1(x))\vu_\beta(t,x).
\end{equation}
\end{lemma}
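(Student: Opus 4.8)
The plan is to follow the strategy of Lemma \ref{lem5.1}, but the argument is in fact simpler. The weight $J^{-1}$ in the model equation has been inserted precisely so that it cancels the Jacobian produced by the change of variables: after straightening, $\vv$ will solve a \emph{clean} divergence-form equation whose leading coefficients $\hat A^{kl}(y_1)$ depend only on $y_1$, with no Jacobian weight left over. This is exactly why the mere $C^{0,1}$ regularity of $\psi$ and $\phi$ is enough here (we never have to differentiate the Jacobian), and why we estimate the oscillations of $U$ and $\vu_\beta$ directly rather than those of $JU$ and $J\vu_\beta$ as in the parabolic case.

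First I would carry out the change of variables. Writing $x=\phi(y)$, $\vv(y)=\vu(\phi(y))$, and testing the weak formulation of the model equation against functions of the form $\varphi(x)=\eta(\psi(x))$, the chain rule (valid a.e.\ for Lipschitz $\psi$, $\phi$ by Rademacher's theorem) together with $\partial\psi/\partial x(\phi(y))=(\partial\phi/\partial y(y))^{-1}$, the identity $J^{-1}(x)\,dx=dy$, and the contractions $D_{y_k}\phi_\alpha\,[(\partial\phi/\partial y)^{-1}]_{n\alpha}=\delta_{nk}$, $D_{y_l}\phi_\beta\,[(\partial\phi/\partial y)^{-1}]_{m\beta}=\delta_{ml}$ collapse the left-hand side to $\int \hat A^{kl}(y_1)\,D_{y_l}\vv\,D_{y_k}\eta\,dy$. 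Hence $\vv$ satisfies, weakly, $D_{y_k}(\hat A^{kl}(y_1)D_{y_l}\vv)=\Div\tilde\vg$, where $\tilde\vg_n(y)=\det(\partial\phi/\partial y)\,\vg_\alpha(\phi(y))\,D_\alpha\psi_n(\phi(y))$; since $|D\psi|+|D\phi|\le\delta^{-1}$, one has $|\tilde\vg(y)|\le N(d,\delta)\,|\vg(\phi(y))|$.

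Next, since $\vv$ and $\tilde\vg$ are $t$-independent, $\vv$ solves $\cP\vv=\Div\tilde\vg$ (with $\vv_t=0$) on every cylinder $Q_{\kappa r}$ whose projection lies in the straightened region, so Proposition \ref{thm4.4} applies, and because all cylinder averages of $t$-independent functions coincide with the corresponding ball averages it yields
\[
(|V-(V)_{B_r}|)_{B_r}+(|D_{y'}\vv-(D_{y'}\vv)_{B_r}|)_{B_r}\le N\kappa^{-1/2}(|D\vv|^2)_{B_{\kappa r}}^{1/2}+N\kappa^{(d+2)/2}(|\tilde\vg|^2)_{B_{\kappa r}}^{1/2},
\]
where $V(y)=\hat A^{1l}(y_1)D_{y_l}\vv(y)$, which by \eqref{eq9.23b}--\eqref{eq12.26b} is exactly $U\circ\phi$, while $(D_{y_\beta}\vv)\circ\psi=\vu_\beta$. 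Then I would transfer everything back to $x$-coordinates using the bi-Lipschitz distortion estimates $\dashint_{B_\rho}f(x)\,dx\le N\dashint_{B_{\nu\rho}}f(\phi(y))\,dy$ and $\dashint_{B_\rho}f(\phi(y))\,dy\le N\dashint_{B_{\nu\rho}}f(x)\,dx$ for nonnegative $f$ (the elliptic counterpart of \eqref{eq1010}, with $\nu=\nu(d,\delta)$ coming from $\psi(B_\rho),\phi(B_\rho)\subset B_{\nu\rho}$ and boundedness of the Jacobians, after normalizing $\psi(0)=0$ as in Lemma \ref{lem5.1}): for any constant $c$, $\dashint_{B_r}|U-c|\,dx\le N\dashint_{B_{\nu r}}|V-c|\,dy$, and with $c=(V)_{B_{\nu r}}$ together with $(|U-(U)_{B_r}|)_{B_r}\le 2\dashint_{B_r}|U-(V)_{B_{\nu r}}|\,dx$ this bounds the $U$-oscillation by the $V$-oscillation on $B_{\nu r}$; the same comparison handles $\vu_\beta$ versus $D_{y_\beta}\vv$, and also gives $(|\tilde\vg|^2)_{B_{\kappa r}}^{1/2}\le N(|\vg|^2)_{B_{\nu\kappa r}}^{1/2}$ and $(|D\vv|^2)_{B_{\kappa r}}^{1/2}\le N(|D\vu|^2)_{B_{\nu\kappa r}}^{1/2}$. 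Absorbing the various constants into one $\nu=\nu(d,\delta)$ yields the asserted inequality.

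The one genuinely delicate point is the rigorous justification of the change of variables under the mere $C^{0,1}$ hypothesis: one must check that $\vv\in W^1_{2,\mathrm{loc}}$, that the chain rule and the integration-by-parts manipulations above are legitimate for Lipschitz $\psi$, $\phi$ (via approximation and Rademacher), and that test functions of the form $\eta(\psi(\cdot))$ exhaust enough of $W^1_2$ to recover the weak formulation for $\vv$. Once this is set up, and once the ball-distortion constant $\nu(d,\delta)$ is arranged exactly as in the proof of Lemma \ref{lem5.1}, the rest is routine bookkeeping — and, crucially, no smoothness of $J$ is ever invoked. The localization, cutoffs, and the role of Assumption \ref{assump4} then enter only at the level of the analogue of Proposition \ref{thm5.2}, precisely as in Section \ref{sec5}.
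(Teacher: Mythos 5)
Your proposal is correct and follows essentially the same route as the paper: change variables $\vv(y)=\vu(\phi(y))$ using the weak formulation, observe that the inserted $J^{-1}$ weight cancels the Jacobian so that $\vv$ solves $D_{y_\alpha}(\hat A^{\alpha\beta}(y_1)D_{y_\beta}\vv)=\Div\tilde\vg$ with no leftover weight (hence only $C^{0,1}$ regularity of $\psi,\phi$ is needed), apply Proposition \ref{thm4.4} to the $t$-independent $\vv$, and transfer back via the bi-Lipschitz distortion estimates as in Lemma \ref{lem5.1}. The paper's own proof is a terse version of exactly this argument; your write-up correctly fills in the routine details, including the observation that $t$-independent cylinder averages reduce to ball averages.
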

\begin{proof}
The proof is similar to that of Lemma \ref{lem5.1}.
From the integral formulation, it is easy to see that
$\vv$ satisfies
$$
D_{y_\alpha}(\hat A^{\alpha\beta}(y^1)D_{y_\beta}\vv)=D_{y_\alpha}\left(J D_{\beta}\psi_\alpha g_\beta\right).
$$
The lemma then follows from Proposition \ref{thm4.4}.
\end{proof}

\begin{proposition}
                                            \label{thm6.2}
Let $\gamma > 0$ and $\tau,\sigma \in (1,\infty)$ satisfy
$1/\tau+1/\sigma=1$.
Let $\nu=\nu(d,\delta)>1$ be the constant in Lemma \ref{lem5.1},
$B^{\alpha}=\hat B^{\alpha}=C=0$, and $\vg\in L_{2,\text{loc}}(\bR^d)$.
Assume that $\vu\in C_0^\infty(\bR^d)$ vanishes outside $B_{R}$ for some $R\in (0,R_0]$ and
satisfies $\cL \vu=\Div \vg$.
Then under Assumption \ref{assump4} ($\gamma$), for each
$r\in (0,\infty)$, $\kappa\ge 8$, and $x_0\in \bR^{d}$,
there exist a diffeomorphism $\psi\in\Psi$,
coefficients $\hat{A}^{1\beta}, \beta=1,\cdots,d$ (independent of $\vu$),
and a positive constant $N=N(d,m,\delta,\tau)$ such that
$$
\left(|U-(U)_{B_r(x_0)}|\right)_{B_r(x_0)}
+\sum_{\beta= 2}^d\left(|\vu_\beta-(\vu_\beta)_{B_r(x_0)}|\right)_{B_r(x_0)}
$$
$$
\le N\kappa^{(d+2)/2}
\left(|\vg|^2\right)_{B_{\nu\kappa r}(x_0)}^{1/2}
+\kappa^{-1/2}
\left(|D\vu|^2\right)_{B_{\nu\kappa r}(x_0)}^{1/2}
$$
\begin{equation}
                                \label{eq13.5.05b}
+N\kappa^{(d+2)/2}\gamma^{1/(2\sigma)}
\left(|D\vu|^{2\tau}\right)_{B_{\nu\kappa r}(x_0)}^{1/(2\tau)},
\end{equation}
where $\vu_{\beta}$, $J$ and $U$ are defined as in \eqref{eq9.23b} and \eqref{eq12.26b}.
\end{proposition}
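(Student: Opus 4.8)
The plan is to transcribe the proof of Proposition \ref{thm5.2}, replacing Lemma \ref{lem5.1} by Lemma \ref{lem6.1}, Assumption \ref{assump2} ($\gamma$) by Assumption \ref{assump4} ($\gamma$), and dropping the time variable. The argument is actually simpler: Assumption \ref{assump4} already measures the oscillation of $A$ against a pull-back of some $\hat A\in\mathcal A$, so there is no need to first freeze the coefficients at the center of a ball, and consequently the term of order $R$ present in \eqref{eq13.5.05} does not appear here.

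First I would fix $\kappa\ge 8$ and $r\in(0,\infty)$ and set $B:=B_{\nu\kappa r}(x_0)$ if $\nu\kappa r<R$ and $B:=B_R$ otherwise. Applying Assumption \ref{assump4} ($\gamma$) to $B$ yields a $\psi\in\Psi$ and an $\hat A\in\mathcal A$, depending on $x_0,r,\kappa$ but not on $\vu$, such that \eqref{eq4.50} holds on $B$. With $\phi=\psi^{-1}$, $y=\psi(x)$, and $J$ as in \eqref{eq12.26b}, set
$$
\tilde A^{\alpha\beta}(x)=\hat A^{kl}(\psi_1(x))\,D_{y_k}\phi_\alpha(\psi(x))\,D_{y_l}\phi_\beta(\psi(x))\,J^{-1}(x);
$$
these are exactly the coefficients of the operator appearing in Lemma \ref{lem6.1}. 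Rewriting $\cL\vu=\Div\vg$ as
$$
D_\alpha\bigl(\tilde A^{\alpha\beta}D_\beta\vu\bigr)=\Div(\vg+\hat\vg),\qquad
\hat\vg_\alpha:=\sum_{\beta=1}^d\bigl(\tilde A^{\alpha\beta}-A^{\alpha\beta}\bigr)D_\beta\vu,
$$
and invoking Lemma \ref{lem6.1} after shifting $x_0$ to the origin, I get the left-hand side of \eqref{eq13.5.05b} bounded by $N\kappa^{(d+2)/2}\bigl(|\vg+\hat\vg|^2\bigr)_{B_{\nu\kappa r}(x_0)}^{1/2}+N\kappa^{-1/2}\bigl(|D\vu|^2\bigr)_{B_{\nu\kappa r}(x_0)}^{1/2}$.

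The heart of the matter is the estimate of $\hat\vg$. The algebraic point is that if $\hat A^{kl}(\psi_1(x))$ is replaced by the exact push-forward $A^{\gamma\delta}(x)D_\gamma\psi_k(x)D_\delta\psi_l(x)J(x)$, then, using the chain-rule identity $D_\gamma\psi_k\,D_{y_k}\phi_\alpha(\psi(x))=\delta_{\alpha\gamma}$ together with the cancellation of the two Jacobian factors, $\tilde A^{\alpha\beta}$ becomes $A^{\alpha\beta}$ and $\hat\vg$ vanishes. Hence, using $|D\phi|\le\delta^{-1}$ and the boundedness of $J$ and $J^{-1}$ by constants depending only on $\delta$,
$$
|\tilde A^{\alpha\beta}(x)-A^{\alpha\beta}(x)|\le N(d,\delta)\sum_{k,l}\Bigl|\hat A^{kl}(\psi_1(x))-A^{\gamma\delta}(x)D_\gamma\psi_k(x)D_\delta\psi_l(x)J(x)\Bigr|.
$$
Since $|\tilde A^{\alpha\beta}-A^{\alpha\beta}|$ is itself bounded by a constant depending only on $\delta$, one has $|\tilde A^{\alpha\beta}-A^{\alpha\beta}|^{2\sigma}\le N|\tilde A^{\alpha\beta}-A^{\alpha\beta}|$, so \eqref{eq4.50} on $B$ gives $\int_B|\tilde A-A|^{2\sigma}\,dx\le N\gamma|B|$. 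Because $\vu$ is supported in $B_R$ and $B_{\nu\kappa r}(x_0)\cap B_R\subseteq B$, Hölder's inequality with exponents $\sigma,\tau$ yields
$$
\int_{B_{\nu\kappa r}(x_0)}|\hat\vg|^2\,dx\le\Bigl(\int_B|\tilde A-A|^{2\sigma}\,dx\Bigr)^{1/\sigma}\Bigl(\int_{B_{\nu\kappa r}(x_0)}|D\vu|^{2\tau}\,dx\Bigr)^{1/\tau},
$$
and dividing by $|B_{\nu\kappa r}(x_0)|$ and using $1/\sigma-1=-1/\tau$ turns this into $\bigl(|\hat\vg|^2\bigr)_{B_{\nu\kappa r}(x_0)}^{1/2}\le N\gamma^{1/(2\sigma)}\bigl(|D\vu|^{2\tau}\bigr)_{B_{\nu\kappa r}(x_0)}^{1/(2\tau)}$. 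Inserting this, together with $\bigl(|\vg+\hat\vg|^2\bigr)^{1/2}\le\bigl(|\vg|^2\bigr)^{1/2}+\bigl(|\hat\vg|^2\bigr)^{1/2}$, into the bound of the previous paragraph produces \eqref{eq13.5.05b}.

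I expect the only genuinely non-routine point to be the bookkeeping in the last step: verifying the chain-rule identity that makes the exact push-forward coefficients pull back to $A$, and, more importantly, upgrading the $L^1$ smallness supplied by Assumption \ref{assump4} to the $L^{2\sigma}$ smallness needed for Hölder's inequality — this is precisely where the uniform boundedness of the coefficients enters, and it is what allows $\psi$ to be taken merely in $C^{0,1}$ with no control on $D^2\psi$. Everything else is a direct copy of the parabolic argument with the $t$-variable and the determinant weight on $\vv$ removed.
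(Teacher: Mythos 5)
Your proposal is correct and follows essentially the same route as the paper's proof: fix the ball $B$, obtain $\psi$ and $\hat A$ from Assumption \ref{assump4}, apply Lemma \ref{lem6.1} with the error term $\hat\vg$, and bound $\hat\vg$ by Hölder after upgrading the $L^1$ smallness from \eqref{eq4.50} to $L^{2\sigma}$ smallness via the uniform bound on the coefficients. Your remarks on the chain-rule cancellation and on why no $R$-term or $|\vu|$-term appears (the $J^{-1}$ built into the coefficients of Lemma \ref{lem6.1} cancels the Jacobian, so $J$ is never differentiated, which is why $C^{0,1}$ suffices) are accurate elaborations of steps the paper leaves implicit.
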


\begin{proof}
We fix a $\kappa\ge 8$, and $r\in
(0,\infty)$. Choose $B$ to be  $B_{\nu\kappa
r}(t_0,x_0)$ if $\nu\kappa r< R$ and   $B_R$ if $\nu\kappa
r\ge  R$.
By Assumption \ref{assump4} ($\gamma$), we can find $\psi\in \Psi$ and $\hat A=\hat A(s)\in \mathcal A$ satisfying \eqref{eq4.50}.
By Lemma \ref{lem6.1} with a shift of the coordinates,
$$
\left(|U-(U)_{B_r(x_0)}|\right)_{B_r(x_0)}
+\sum_{\beta= 2}^d\left(|\vu_\beta-(\vu_\beta)_{B_r(x_0)}|\right)_{B_r(x_0)}
$$
\begin{equation}
                                \label{13.5.42b}
\le N\kappa^{(d+2)/2}\left(|\vg+\hat \vg|^2\right)_{B_{\nu\kappa r}(x_0)}^{1/2}+N\kappa^{-1/2}
\left(|D\vu|^2\right)_{B_{\nu\kappa r}(x_0)}^{1/2},
\end{equation}
where $N=N(d,m,\delta)>0$ and
$$
\hat \vg_\alpha=\left(\hat A^{kl}(y_1)D_{y_k}\phi_\alpha(y)D_{y_l}\phi_\beta(y)J^{-1}
-A^{\alpha\beta}(x)\right)D_{\beta}\vu(x).
$$
By the definition of $\hat A$,
\begin{align}
\int_{B_{\nu\kappa r}(x_0)} |\hat \vg_\alpha|^2 \, dx
&\le N\int_{B_{\nu\kappa r}(x_0)\cap B_R}
\big| \big(\hat A^{kl}(\psi_1)-A^{\alpha\beta}D_\alpha\psi_k D_\beta\psi_l J\big)\big|^2 |D\vu \big|^2 \, dx\nonumber\\
&\le NI_1^{1/\sigma} I_2^{1/\tau},      \label{13.5.52b}
\end{align}
where
\begin{align*}
I_1 &= \sum_{k,l}
\int_{B_{\nu\kappa r}(x_0) \cap B_{ R}}
| \hat A^{kl}(\psi_1) - A^{\alpha\beta}D_\alpha\psi_k D_\beta\psi_l J |^{2\sigma} \, dx,\\
I_2 &= \int_{B_{\nu\kappa r}(x_0)} |D\vu|^{2\tau} \, dx.
\end{align*}
Due to Assumption \ref{assump4} ($\gamma$),
$$
I_1 \leq \sum_{k,l}\int_{B}
| \hat A^{kl}(\psi_1) - A^{\alpha\beta}D_\alpha\psi_k D_\beta\psi_l J|^{2\sigma} \, dx\leq N\gamma|B|
\le  N  (\nu\kappa r)^{d} \gamma.
$$
This together with  \eqref{13.5.42b} and \eqref{13.5.52b} yields
\eqref{eq13.5.05b}. The proposition is proved.
\end{proof}

Following the arguments in the previous section, we obtain the result of Theorem \ref{mainthm3} under Assumption \ref{assump4}. We omit the details.

\mysection{Linearly laminate systems}
                                \label{secapp}

As an application of the main results in Section \ref{secMain}, in this section we consider the linearly laminate system
\begin{equation}
                                            \label{laminate}
\cL u:=D_{\alpha}(A^{\alpha\beta}D_\beta \vu)=\vf\quad \text{in}\,\,\Omega,
\end{equation}
where $\Omega$ is a bounded domain in $\bR^d$, and
$A^{\alpha\beta}=A^{\alpha\beta}(x_1)$ are measurable functions of
$x_1$ alone and satisfy \eqref{ellipticity}. In particular,
$A^{\alpha\beta}$ can be step functions with respect to $x_1$ with
jump discontinuities. This type of systems models deformations in
composite materials as fiber-reinforced materials,
where $A^{\alpha\beta}$ are the coefficients of a
stiffness matrix of an elastic material. In \cite{CKC}, Chipot,
Kinderlehrer and Vergara-Caffarelli studied \eqref{laminate} and
proved the following result.
We note that a similar estimate was used in \cite{LiNi} by Li and Nirenberg to study systems from composite material.

\begin{theorem}[Theorem 2 in \cite{CKC}]
                                                    \label{thm6}
Under the assumptions above, let $\vu$ be a weak solution to \eqref{laminate}. If $\vf\in H^k(\Omega)$ where the integer $k$ satisfies $k\ge [d/2]$, then there is a $p>d$ such that
$$
\vu\in W^1_\infty(\Omega')\quad \text{and}\quad
D_{x'} \vu, \, U\in W^1_p(\Omega'),\quad \Omega'\subset\subset \Omega,
$$
where $U$ is defined by \eqref{eq100}.
Moreover, there exists a  constant $N=N(\delta,\Omega,\Omega')$ such that
\begin{align*}
&\|D_{x'}\vu\|_{W^1_p(\Omega')}+\|U\|_{W^1_p(\Omega')}\le
N(\|\vu\|_{H^1(\Omega)}+\|\vf\|_{H^k(\Omega)}),\\
&\|\vu\|_{W^1_\infty(\Omega')}\le
N(\|\vu\|_{H^1(\Omega)}+\|\vf\|_{H^k(\Omega)}).
\end{align*}
\end{theorem}

Using Theorem \ref{mainthm3}, we are  able be relax the regularity
condition of $\vf$ in Theorem \ref{thm6} to an integrability condition on $\vf$.

\begin{theorem}
                                                    \label{thm7}
Under the assumptions above, let $\vu$ be a weak solution to
\eqref{laminate}. If $\vf\in L^p(\Omega)$, where $p > d$, we have
$$
\vu\in W^1_\infty(\Omega')\quad \text{and}\quad
D_{x'} \vu,\, U\in W^1_p(\Omega'),\quad \Omega'\subset\subset \Omega.
$$
Moreover, there exists a constant
$N=N(\delta,m,p, \Omega,\Omega')$
such that
\begin{align}
                                        \label{eq1.11}
&\|D_{x'}\vu\|_{W^1_p(\Omega')}+\|U\|_{W^1_p(\Omega')}\le
N(\|\vu\|_{L_2(\Omega)}+\|\vf\|_{L_p(\Omega)}),\\
                                        \label{eq1.12}
&\|\vu\|_{W^1_\infty(\Omega')}\le
N(\|\vu\|_{L_2(\Omega)}+\|\vf\|_{L_p(\Omega)}).
\end{align}
\end{theorem}

We give a sketch of the proof. Clearly, the inequality \eqref{eq1.12} follow from \eqref{eq1.11} by the Sobolev imbedding theorem and the nondegeneracy of $A^{11}$. Also by approximations, it suffices to verify \eqref{eq1.11} for $\vu\in C^\infty_{\text{loc}}$.

We localize Theorem \ref{mainthm3} and get the following estimate.

\begin{lemma}
                                            \label{lemA.1}
Let $p\in (1,\infty)$. Assume  $A^{\alpha\beta}=A^{\alpha\beta}(x_1)$, $\vu\in W^1_p(B_1)$ and
\begin{equation}
                                        \label{eq63.12}
\cL \vu=\Div \vg+\vf,
\end{equation}
in $B_1$, where $\vf,\vg\in L_p(B_1)$. Then there exists a constant $N=N(d,m,\delta,p)$ such that
\begin{equation*}
\|\vu\|_{W_p^1(B_{1/2})}\le N(\|\vu\|_{L_p(B_1)}+\|\vg\|_{L_p(B_1)}+\|\vf\|_{L_p(B_1)}).
\end{equation*}
\end{lemma}

By using the Sobolev imbedding theorem and a bootstrap argument, we get
\begin{corollary}
                                    \label{corA.1}
Let $q\in (1,\infty)$. Assume $A^{\alpha\beta}=A^{\alpha\beta}(x_1)$, $\vu$ is a weak solution to \eqref{eq63.12} in $B_1$, where $\vf,\vg\in L_q(B_1)$. Then there exists a constant $N=N(d,m,\delta,p,q)$ such that
\begin{equation*}
\|\vu\|_{W_q^1(B_{1/2})}\le N(\|\vu\|_{L_p(B_1)}+\|\vg\|_{L_q(B_1)}+\|\vf\|_{L_q(B_1)}).
\end{equation*}
\end{corollary}

By Corollary \ref{corA.1} applied to  \eqref{laminate} it follows that
\begin{equation}
                                        \label{eq2.29}
\|\vu\|_{W^1_p(\Omega')}\le
N(\|\vu\|_{L_2(\Omega)}+\|\vf\|_{L_p(\Omega)}).
\end{equation}
Differentiating \eqref{laminate} in $x'$ gives
$$
D_{\alpha}(A^{\alpha\beta}D_\beta D_{x'}\vu)=D_{x'}\vf\quad \text{in}\,\,\Omega.
$$
We take $\Omega''$ such that
$\Omega'\subset\subset\Omega''\subset\subset\Omega$.
It then
follows from Corollary \ref{corA.1} again (now $\Div \vg =
D_{x'}\vf$) that
\begin{equation}
                                        \label{eq2.27}
\|D_{x'}\vu\|_{W^1_p(\Omega')}\le
N(\|D_{x'}\vu\|_{L_2(\Omega'')}+\|\vf\|_{L_p(\Omega'')}) \le
N(\|\vu\|_{L_2(\Omega)}+\|\vf\|_{L_p(\Omega)}).
\end{equation}
Next we estimate $U$. By \eqref{eq2.29},
\begin{equation}
                                        \label{eq2.31}
\|U\|_{L_p(\Omega')}\le
N(\|\vu\|_{L_2(\Omega)}+\|\vf\|_{L_p(\Omega)}).
\end{equation}
Since
$$
D_1U=\vf-A^{\alpha\beta}\sum_{\alpha=2}^d\sum_{\beta=1}^dD_{\alpha\beta}\vu,
$$
we deduce from \eqref{eq2.27} that
\begin{equation}
                                        \label{eq2.34}
\|D_{1}U\|_{L_p(\Omega')}\le
N(\|\vu\|_{L_2(\Omega)}+\|\vf\|_{L_p(\Omega)}).
\end{equation}
Moreover, because
$$
D_{x'}U=\sum_1^d A^{1\beta}D_{\beta}D_{x'}\vu,
$$
again by \eqref{eq2.27} we have
\begin{equation}
                                        \label{eq2.37}
\|D_{x'}U\|_{L_p(\Omega')}\le
N(\|\vu\|_{L_2(\Omega)}+\|\vf\|_{L_p(\Omega)}).
\end{equation}
Combining \eqref{eq2.31}-\eqref{eq2.37}, we reach
\begin{equation}
                                        \label{eq2.39}
\|U\|_{W^1_p(\Omega')}\le
N(\|\vu\|_{L_2(\Omega)}+\|\vf\|_{L_p(\Omega)}).
\end{equation}
This completes the proof of the theorem.

\section*{Acknowledgement}
The author is grateful to Nicolai V. Krylov, Yanyan Li and the anonymous referees for their helpful comments on an earlier version of the paper.

\end{document}